\pdfoutput=1
\RequirePackage{ifpdf}
\ifpdf 
\documentclass[pdftex]{sigma}
\else
\documentclass{sigma}
\fi

\numberwithin{equation}{section}

\newtheorem{Theorem}{Theorem}[section]
\newtheorem*{Theorem*}{Theorem}
\newtheorem{Corollary}[Theorem]{Corollary}
\newtheorem{Lemma}[Theorem]{Lemma}
\newtheorem{Claim}[Theorem]{Claim}

\theoremstyle{definition}
\newtheorem{Definition}[Theorem]{Definition}

\newtheorem{Remark}[Theorem]{Remark}

\DeclareMathOperator{\ev}{ev}
\DeclareMathOperator{\id}{id}

\DeclareMathOperator{\ad}{ad}

\DeclareMathOperator{\row}{row}
\DeclareMathOperator{\col}{col}

\newcommand{\ve}{\varepsilon}

\begin{document}

\allowdisplaybreaks

\newcommand{\arXivNumber}{2407.15367}

\renewcommand{\PaperNumber}{007}

\FirstPageHeading

\ShortArticleName{Commuting Subalgebras of Affine Super Yangians Arising from Edge Contractions}

\ArticleName{Commuting Subalgebras of Affine Super Yangians\\ Arising from Edge Contractions}

\Author{Mamoru UEDA}

\AuthorNameForHeading{M.~Ueda}

\Address{Graduate School of Mathematical Sciences, The University of Tokyo, \\
3-8-1 Komaba Meguro-ku Tokyo 153-8914, Japan}
\Email{\mail{mueda@ms.u-tokyo.ac.jp}}

\ArticleDates{Received August 08, 2025, in final form January 08, 2026; Published online January 31, 2026}

\Abstract{In the previous paper, we constructed two kinds of edge contractions for the affine super Yangian and a homomorphism from the affine super Yangian to the universal enveloping algebra of a $W$-superalgebra of type $A$. In this article, we show that these two edge contractions commute with each other. As an application, we give a homomorphism from the affine super Yangian to some centralizer algebras of the universal enveloping algebra of $W$-superalgebras of type $A$. Using the edge contraction, we also show the compatibility of the coproduct for the affine super Yangian with the parabolic induction for a $W$-superalgebra of type $A$ in some special cases.}

\Keywords{Yangian; edge contraction; $W$-algebra; coset}

\Classification{17B37; 17B69}

\section{Introduction}

The Yangian $Y_\hbar(\mathfrak{g})$ associated with a finite dimensional simple Lie algebra $\mathfrak{g}$ was introduced by Drinfeld \cite{D1, D2}. The Yangian $Y_\hbar(\mathfrak{g})$ is a quantum group which is a deformation of the current algebra $\mathfrak{g}\otimes\mathbb{C}[z]$. The Yangian of type $A$ has several presentations: the RTT presentation, the current presentation, the parabolic presentation and so on.
By using the current presentation, we can extend the definition of the Yangian $Y_\hbar(\mathfrak{g})$ to a symmetrizable Kac--Moody Lie algebra~$\mathfrak{g}$. Especially, in the case that $\mathfrak{g}$ is of affine type, Guay--Nakajima--Wendlandt \cite{GNW} defined the coproduct for the affine Yangian.

One of the difference between finite Yangians of type $A$ and affine Yangians of type $A$ is the existence of the RTT presentation and the parabolic presentation (see \cite{BK2}). By using these presentations, two embeddings were constructed for the finite Yangian:
\begin{align*}
\Psi_1^f\colon\ Y(\mathfrak{gl}(n))\to Y(\mathfrak{gl}(m+n)),\qquad
\Psi_2^f\colon\ Y(\mathfrak{gl}(m))\to Y(\mathfrak{gl}(m+n)),
\end{align*}
where $Y(\mathfrak{gl}(n))$ is the Yangian associated with $\mathfrak{gl}(n)$.
By using \smash{$\Psi_1^f$} and \smash{$\Psi_2^f$}, Olshanskii \cite{Ol} gave a homomorphism from the finite Yangian \smash{$Y(\mathfrak{gl}(m))$} to the centralizer algebra of $U(\mathfrak{gl}(m+n))$ and $U(\mathfrak{gl}(n))$. Moreover, $Y(\mathfrak{gl}(m))$ can be embedded into the projective limit of this centralizer algebra.
In \cite{U10}, we gave the affine version of \smash{$\Psi_1^f$} and \smash{$\Psi_2^f$} and constructed a homomorphism from the affine Yangian associated with \smash{$\widehat{\mathfrak{sl}}(m)$} to the centralizer algebra of \smash{$U\big(\widehat{\mathfrak{gl}}(m+n)\big)$} and~\smash{$U\big(\widehat{\mathfrak{gl}}(n)\big)$}.

In super setting, Nazarov \cite{Na} introduced the Yangian associated with $\mathfrak{gl}(m|n)$ by using the RTT presentation and Stukopin \cite{S} defined the Yangian of $\mathfrak{sl}(m|n)$ by using the current presentation. Peng \cite{Pe} gave a parabolic presentation of the super Yangian associated with~$\mathfrak{gl}(m|n)$. The author \cite{U2} defined the affine super Yangian associated with \smash{$\widehat{\mathfrak{sl}}(m|n)$} as a~quantum group.\looseness=1

In \cite{U13}, we gave two homomorphisms called the edge contractions for the affine super Yangian:
\begin{align*}
\begin{split}
&\Psi_1\colon\ Y_{\hbar,\ve}\big(\widehat{\mathfrak{sl}}(m_1|n_1)\big)\to \widetilde{Y}_{\hbar,\ve}\big(\widehat{\mathfrak{sl}}(m_1+m_2|n_1+n_2)\big),\\
&\Psi_2\colon\ Y_{\hbar,\ve+(m_1-n_1)\hbar}\big(\widehat{\mathfrak{sl}}(m_2|n_2)\big)\to \widetilde{Y}_{\hbar,\ve}\big(\widehat{\mathfrak{sl}}(m_1+m_2|n_1+n_2)\big),
\end{split}
\end{align*}
where \smash{$\widetilde{Y}_{\hbar,\ve}\big(\widehat{\mathfrak{sl}}(m_1+m_2|n_1+n_2)\big)$} is the standard degreewise completion of $Y_{\hbar,\ve}\smash{\big(\widehat{\mathfrak{sl}}(m_1+m_2}|n_1\allowbreak +n_2)\big)$.

The main theorem of this article is the following.

\begin{Theorem}\label{A}
The images of $\Psi_1$ and $\Psi_2$ commute with each other.
\end{Theorem}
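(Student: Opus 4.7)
The plan is to reduce Theorem \ref{A} to the vanishing of a finite list of commutators between generators, and then verify those using the explicit formulas for $\Psi_1$ and $\Psi_2$ recalled from \cite{U13}. In the current (Drinfeld) presentation, $Y_{\hbar,\ve}(\widehat{\mathfrak{sl}}(m|n))$ is generated by the degree-zero generators $h_{i,0}$ and $x^{\pm}_{i,0}$ together with a single height-one generator (for instance $x^{\pm}_{i_0,1}$ for some fixed index $i_0$), since higher-height generators can be produced by iterated commutators such as $[h_{i,1}, \cdot]$. Hence it suffices to verify
\begin{equation*}
 [\Psi_1(a), \Psi_2(b)] = 0
\end{equation*}
when $a$ and $b$ range over such generating sets for the sources of $\Psi_1$ and $\Psi_2$ respectively.

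For the degree-zero part, I would use the explicit formulas from \cite{U13}: the images of $\Psi_i$ at degree zero lie, up to universal corrections, in disjoint subsets of Chevalley generators of $\widehat{\mathfrak{sl}}(m_1+m_2|n_1+n_2)$, except possibly at the interface nodes where the two index ranges abut. There the Serre-type relations of the affine Kac--Moody superalgebra yield the required commutativity. For the mixed case involving a height-one generator, I would expand $[\Psi_1(x^{\pm}_{i,1}), \Psi_2(b)]$ and its analogues using the defining relations of the target Yangian. The crucial point is that the shift $\ve \mapsto \ve + (m_1-n_1)\hbar$ appearing in the source parameter of $\Psi_2$ is designed to balance precisely the $\hbar$-linear correction that one would otherwise pick up when commuting the image of a height-one generator past the image of a generator coming from the other edge; I expect this to appear when the correction terms are expressed via the non-trivial Cartan element sitting on the interface node.

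The main obstacle will be the computation at the interface nodes in the mixed degree-zero / height-one case: there the $\hbar$-linear correction terms built into the definitions of $\Psi_1$ and $\Psi_2$ collide, and matching these corrections against the parameter shift requires careful use of the cubic relations of the affine super Yangian (as well as of the super sign conventions at an odd simple root). Once all such generator commutators are shown to vanish, the commutativity of the entire images of $\Psi_1$ and $\Psi_2$ follows automatically, since the commutant of a subset is closed under the algebra operations.
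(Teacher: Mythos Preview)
Your reduction to a finite list of generator--generator commutators is exactly the strategy the paper uses, so the skeleton is right. The paper chooses $\widetilde{H}_{1,1}$ rather than $x^\pm_{i_0,1}$ as the single height-one generator, but that is inessential. Three points, however, need correction.

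First, the degree-zero case is not merely ``up to universal corrections'' and does not require any Serre-type relations at interface nodes. The images $\Psi_1(X^\pm_{i,0})$ are matrix units $E_{a,b}t^s$ with $a,b\in\{1,\dots,m_1,-1,\dots,-n_1\}$, while $\Psi_2(X^\pm_{j,0})$ are matrix units with both indices in the complementary block $\{m_1+1,\dots,m_1+m_2,-n_1-1,\dots,-n_1-n_2\}$; these commute in the loop algebra on the nose.

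Second, the parameter shift $\ve\mapsto\ve+(m_1-n_1)\hbar$ is not the mechanism that produces commutativity. That shift is what makes $\Psi_2$ a homomorphism in the first place (this was the content of \cite{U13}); it does not resurface in the commutator computation. What actually cancels the unwanted terms are the explicit infinite-sum corrections $P_i,Q_i$ (for $\Psi_1(\widetilde{H}_{1,1})$) and $R_j,S_j$ (for $\Psi_2(\widetilde{H}_{1,1})$) built into the images. In the mixed case, e.g.\ $[\Psi_1(X^+_{m_1,0}),\Psi_2(\widetilde{H}_{1,1})]$, the commutator of $E_{m_1,-1}$ with the bare $\widetilde{H}_{m_1+1,1}$ is nonzero and is cancelled term by term against $[E_{m_1,-1},R_1-R_2+S_1-S_2]$; no appeal to the $\ve$-shift or to a Cartan element on an interface node is made.

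Third, and most seriously, you have omitted the hardest case. Your list covers degree-zero/degree-zero and degree-zero/height-one, but the height-one/height-one commutator
\[
[\Psi_1(\widetilde{H}_{1,1}),\Psi_2(\widetilde{H}_{1,1})]=0
\]
must also be checked, and in the paper this is by far the longest computation (Section~3.3). It is reduced, via the elements $J(h_i)$ and Lemma~\ref{J}, to showing
\[
[P_i-Q_i,R_j+S_j]+[A_i,R_j+S_j]-[P_i-Q_i,A_{m_1+j}]=0,
\]
and the verification involves expanding and matching several dozen triple-product terms. Your proposal should anticipate this case and budget for it; the ``cubic relations'' you mention do not by themselves close this gap.
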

By Theorem~\ref{A}, we obtain a homomorphism
\begin{equation*}
\Psi_1\otimes\Psi_2\colon\ Y_{\hbar,\ve}\big(\widehat{\mathfrak{sl}}(m_1|n_1)\big)\otimes Y_{\hbar,\ve+(m_1-n_1)\hbar}\big(\widehat{\mathfrak{sl}}(m_2|n_2)\big)\to \widetilde{Y}_{\hbar,\ve}\big(\widehat{\mathfrak{sl}}(m_1+m_2|n_1+n_2)\big).
\end{equation*}
The quantum toroidal algebra often has the same result as the affine Yangian. For example, the evaluation map for the quantum toroidal algebra was given by Miki \cite{M1} in the non-super setting and by Bezzera--Muhkin \cite{BM} in the super setting. The non-super version of Theorem~\ref{A} was given for the quantum toroidal algebra by Feigin--Jimbo--Miwa--Muhkin \cite{FJMM}, which corresponds to the author's work \cite{U10}. In the quantum toroidal setting, the proof was given by using the relations of the current presentation. Unfortunately, the corresponding relations are not given in the affine Yangian setting. Thus, we prove Theorem~\ref{A} by using the finite presentation.

As an application of Theorem~\ref{A}, we can give a relationship between the affine super Yangian and a centralizer algebra of $U\big(\widehat{\mathfrak{gl}}(m|n)\big)$.
For an associative superalgebra $A$ and its subalgebra~$B$, we set
\begin{equation*}
C(A,B)=\{x\in A\mid [x,y]=0\text{for }y\in B\}.
\end{equation*}
The affine super Yangian has a surjective homomorphism called the evaluation map \cite{U3, U2}:
\begin{equation*}
\ev^{m|n}_{\hbar,\ve}\colon\ Y_{\hbar,\ve}\big(\widehat{\mathfrak{sl}}(m|n)\big)\to U\big(\widehat{\mathfrak{gl}}(m|n)\big).
\end{equation*}
By combining \smash{$\ev^{m_1+m_2|n_1+n_2}_{\hbar,\ve}$} and $\Psi_2$, we obtain a homomorphism
\begin{gather*}
\ev^{m_1+m_2|n_1+n_2}_{\hbar,\ve}\circ\Psi_2\colon\\
\qquad Y_{\hbar,\ve+(m_1-n_1)\hbar}\big(\widehat{\mathfrak{sl}}(m_2|n_2)\big)\to C\big(U\big(\widehat{\mathfrak{gl}}(m_1+m_2|n_1+n_2)\big),U\big(\widehat{\mathfrak{gl}}(m_1|n_1)\big)\big).
\end{gather*}
Similarly to finite setting, we expect that the affine super Yangian can be embedded into the projective limit of the centralizer algebra \smash{$C\big(U\big(\widehat{\mathfrak{gl}}(m_1+m_2|n_1+n_2)\big),U\big(\widehat{\mathfrak{gl}}(m_1|n_1)\big)\big)$} through this~homomorphism. We also conjecture that \smash{$C\big(U\big(\widehat{\mathfrak{gl}}(m_1+m_2|n_1+n_2)\big),U\big(\widehat{\mathfrak{gl}}(m_1|n_1)\big)\big)$} is isomorphic to the tensor product of the center of \smash{$U\big(\widehat{\mathfrak{gl}}(m_1|n_1)\big)$} and the image of \smash{$\ev^{m_1+m_2|n_1+n_2}_{\hbar,\ve}\circ\Psi_2$}.

The similar result holds for $W$-superalgebras of type $A$. A $W$-superalgebra $\mathcal{W}^k(\mathfrak{g},f)$ is a~vertex superalgebra associated with a finite dimensional reductive Lie superalgebra $\mathfrak{g}$, an even nilpotent element $f$ and a complex number $k$. Let us set
\begin{align*}
&M=\sum\limits_{i=1}^lu_i,\qquad u_1\geq u_{2}\geq\dots\geq u_l\geq u_{l+1}=0,\nonumber\\
&N=\sum\limits_{i=1}^lq_i,\qquad q_1\geq q_{2}\geq\dots\geq q_l\geq q_{l+1}=0
\end{align*}
and assume that $u_l+q_l\neq0$ and $M\neq N$. Let us take \smash{$f\in\mathfrak{gl}(M|N)=\bigoplus_{i,j\in I_{M|N}}\mathbb{C}E_{i,j}$} as a~nilpotent element of type \smash{$\big(1^{u_1-u_2|q_1-q_2},2^{u_2-u_3|q_2-q_3},\dots,l^{u_l-u_{l+1}|q_l-q_{l+1}}\big)$}. In \cite{U13}, the author has given a homomorphism
\begin{equation*}
\Phi_s\colon\ Y_{\hbar,\ve}\big(\widehat{\mathfrak{sl}}(u_s-u_{s+1}|q_s-q_{s+1})\big)\to\mathcal{U}\big(\mathcal{W}^k(\mathfrak{gl}(M|N),f)\big),
\end{equation*}
where $\mathcal{U}\big(\mathcal{W}^k(\mathfrak{gl}(M|N),f)\big)$ is the universal enveloping algebra of $\mathcal{W}^k(\mathfrak{gl}(M|N),f)$. By Theorem~\ref{A}, we find that $\{\Psi_s\}$ commute with each other. In the case that $u_1=u_2=\dots=u_l=m$ and $q_1=q_2=\dots=q_l=n$, we call $\mathcal{W}^k(\mathfrak{gl}(M|N),f)$ the rectangular $W$-superalgebra of type~$A$ and denote it by \smash{$\mathcal{W}^k\big(\mathfrak{gl}(ml|nl),\big(l^{m|n}\big)\big)$}. In the rectangular case, we \cite{U4} showed that $\Phi_1$ is surjective.
In rectangular setting, there exists a natural embedding from $\mathcal{W}^{k+m_2-n_2}\big(\mathfrak{gl}(2m_1|2n_1),\allowbreak \big(2^{m_1|n_1}\big)\big)$ to \smash{$\mathcal{W}^k\big(\mathfrak{gl}(2m_1+2m_2|2n_1+2n_2),\big(2^{m_1+m_2|n_1+n_2}\big)\big)$}. By Theorem~\ref{A}, we obtain a homomorphism
\begin{align*}
\Phi_1\circ\Psi_2\colon\ Y_{\hbar,\ve+(m_1-n_1)\hbar}\big(\widehat{\mathfrak{sl}}(m_2|n_2)\big)\to C(\mathcal{U}(W_1),\mathcal{U}(W_2)).
\end{align*}
where
\begin{align*}
&W_1=\mathcal{W}^k\big(\mathfrak{gl}(2m_1+2m_2|2n_1+2n_2),\big(2^{m_1+m_2|n_1+n_2}\big)\big),\\
&W_2=\mathcal{W}^{k+m_2-n_2}\big(\mathfrak{gl}(2m_1|2n_1),\big(2^{m_1|n_1}\big)\big).
\end{align*}

As for non-rectangular cases, if $u_1>u_2$, $q_1>q_2$ (resp.\ $u_1=u_2>u_3$, $q_1=q_2>q_3$), the image of $\Phi_1\circ\Psi_1$ (resp.\ $\Phi_1\circ\Psi_2$) coincides with \smash{$U\big(\widehat{\mathfrak{gl}}(u_1-u_2|q_1-q_2)\big)$} \big(resp.\ the rectangular $W$-algebra associated with $\mathfrak{gl}(2u_1-2u_3|2q_1-2q_3)$ and a nilpotent element of type $\big(2^{u_1-u_3|q_1-q_3}\big)$\big). Then, $\Psi_s$ induces a homomorphism from the affine super Yangian to the centralizer algebra of $\mathcal{U}\big(\mathcal{W}^k(\mathfrak{gl}(M|N)),f\big)$
and \smash{$U\big(\widehat{\mathfrak{gl}}(u_1-u_2|q_1-q_2)\big)$} \big(resp.\ $\mathcal{U}\big(\mathcal{W}^k(\mathfrak{gl}(2(u_1-u_3)|2(q_1-q_3)),\allowbreak (2^{u_1-u_3|q_1-q_3}))\big)$\big).

We expect that this result can be applicable to the generalization of the Gaiotto--Rapcak's triality.
Gaiotto and Rapcak \cite{GR} introduced a kind of vertex algebras called $Y$-algebras and conjectured a triality of the isomorphism of $Y$-algebras. Let $f_{n,m}\in\mathfrak{sl}(m+n)$ be a nilpotent element of type $\bigl(n^1,1^m\bigr)$. It is known that some kinds of $Y$-algebras can be realized as a coset of the pair of $\mathcal{W}^k(\mathfrak{sl}(m+n),f_{n,m})$ and $V^{k-m-1}(\mathfrak{gl}(m))$ up to Heisenberg algebras. In this case, Creutzig--Linshaw \cite{CR} have proved the triality conjecture. This result is the generalization of the Feigin--Frenkel duality \cite{FF} and the coset realization of principal $W$-algebra.
The $Y$-algebras can be interpreted as a truncation of $\mathcal{W}_{1+\infty}$-algebra \cite{GG}, whose universal enveloping algebra is isomorphic to the affine Yangian of $\widehat{\mathfrak{gl}}(1)$ up to suitable completions (see \cite{AS,MO, T}).

For a vertex algebra $A$ and its vertex subalgebra $B$, let us set the coset vertex algebra of the~pair $A$ and $B$
${\rm Com}(A,B)=\{a\in A\mid b_{(r)}a=0\text{ for }r\geq0,\, b\in B\}$.
The homomorphism $\Phi_1\circ\Psi_2$ induces the one from the affine super Yangian \smash{$Y_{\hbar,\ve+(m_1-n_1)\hbar}\big(\widehat{\mathfrak{sl}}(m_2|n_2)\big)$} to the universal enveloping algebra of \smash{${\rm Com}\big(W_1,\mathcal{W}^k\big(\mathfrak{sl}(2m_1|2n_1),\big(2^{m_1|n_1}\big)\big)\big)$}. We expect that this homomorphism becomes surjective and induces the isomorphism
\begin{align*}
{\rm Com}(W_3,W_4)&\simeq {\rm Com}(W_5,W_6),
\end{align*}
where{\samepage
\begin{align*}
\begin{split}
&W_3=\mathcal{W}^k\big(\mathfrak{gl}(2m_1+2m_3|2n_1+2n_3),\big(2^{m_1+m_3|n_1+n_3}\big)\big),\\
&W_4=\mathcal{W}^{k+m_3-n_3}\big(\mathfrak{sl}(2m_1|2n_1),\big(2^{m_1|n_1}\big)\big),\\
&W_5=\mathcal{W}^k\big(\mathfrak{gl}(2m_2+2m_3|2n_2+2n_3),\big(2^{m_2+m_3|n_2+n_3}\big)\big),\\
&W_6=\mathcal{W}^{k+m_3-n_3}\big(\mathfrak{sl}(2m_2|2n_2),\big(2^{m_2|n_2}\big)\big).
\end{split}
\end{align*}
These are the generalizations of the Gaiotto--Rapcak's triality.}

For non-rectangular cases, we also expect that similar isomorphisms will hold. In order to consider the non-rectangular setting, we need to construct a relationship between the shifted affine super Yangian and a $W$-superalgebra of type $A$.
In the finite setting, Peng \cite{Pe} wrote down a finite $W$-superalgebra of type $A$ as a quotient algebra of the shifted super Yangian by using the parabolic presentation. Similarly to \cite{Pe}, it is conjectured that there exists a surjective homomorphism from the shifted affine super Yangian to the universal enveloping algebra of \mbox{a~$W$-superalgebra} of type $A$ if we change the definition of the shifted affine super Yangian properly. The image of $\Psi_1\otimes\Psi_2$ corresponds to the Levi subalgebra of the finite super Yangian of type~$A$, which is defined by the parabolic presentation. We expect that $\Psi_1\otimes\Psi_2$ will lead to a new definition of the shifted affine super Yangian.

In Sections~\ref{section7} and~\ref{section8},
we construct the parabolic induction for a $W$-superalgebra in the case that $u_1>u_2>\dots>u_l$, $q_1>q_2>\dots>q_l$ and show that the coproduct for the affine super Yangian is compatible with the parabolic induction via $\Phi_l$ in this case. In order to show the compatibility, by using the edge contraction $\Phi_2$, we need to extend the affine super Yangian. We expect that this extended affine super Yangian will be connected the new definition of the shifted affine super Yangian.

\section{Affine super Yangian}\label{section2}
Let us take integers $m,n\geq 2$ and $m+n\geq 5$. We set
\begin{align*}
I_{m|n}&=\{1,2,\dots,m,-1,-2,\dots,-n\}
\end{align*}
and define the parity on $I_{m|n}$ by
\[p(i)=\begin{cases}
0&\text{if }i>0,\\
1&\text{if }i<0.
\end{cases}\]
Sometimes, we identify $I_{m|n}$ with $\mathbb{Z}/(m+n)\mathbb{Z}$ by corresponding \smash{$-i\in I_{m|n}$} to $m+i\in\mathbb{Z}/(m+n)\mathbb{Z}$ for $1\leq i\leq n$. We set two matrices \smash{$(a_{i,j})_{i,j\in\mathbb{Z}/(m+n)\mathbb{Z}}$} and \smash{$(b_{i,j})_{i,j\in\mathbb{Z}/(m+n)\mathbb{Z}}$} as
\begin{equation*}
a_{i,j}=\begin{cases}
(-1)^{p(i)}+(-1)^{p(i+1)}&\text{if } i=j,\\
-(-1)^{p(i+1)}&\text{if }j=i+1,\\
-(-1)^{p(i)}&\text{if }j=i-1,\\
0&\text{otherwise},
\end{cases}\qquad
b_{i,j}=\begin{cases}
a_{i,j}&\text{if }j=i-1,\\
-a_{i,j}&\text{if }j=i+1,\\
0&\text{ otherwise}.
\end{cases}
\end{equation*}
\begin{Definition}[{\cite[Definition 3.1]{U2}}]
Let $\ve_1,\ve_2\in\mathbb{C}$. The affine super Yangian \smash{$Y_{\ve_1,\ve_2}\big(\widehat{\mathfrak{sl}}(m|n)\big)$} is the associative superalgebra over $\mathbb{C}$ generated by
\begin{equation*}
\bigl\{X_{i,r}^\pm, H_{i,r}\mid i\in I_{m|n}=\mathbb{Z}/(m+n)\mathbb{Z},\,r=0,1\bigr\}
\end{equation*}
subject to the following defining relations:
\begin{alignat*}{3}
&\makebox[0pt][l]{$[H_{i,r}, H_{j,s}] = 0,$}&\\
&\makebox[0pt][l]{$\big[X_{i,0}^{+}, X_{j,0}^{-}\big] = \delta_{i,j} H_{i, 0},$}&\\
&\makebox[0pt][l]{$\big[X_{i,1}^{+}, X_{j,0}^{-}\big] = \delta_{i,j} H_{i, 1} = \big[X_{i,0}^{+}, X_{j,1}^{-}\big],$}&\\
&\makebox[0pt][l]{$\big[H_{i,0}, X_{j,r}^{\pm}\big] = \pm a_{i,j} X_{j,r}^{\pm},$}&\\
&\makebox[0pt][l]{$\big[H_{i, r+1}, X_{j, s}^{\pm}\big] - \big[H_{i, r}, X_{j, s+1}^{\pm}\big] = \pm a_{i,j}\frac{\ve_1+\ve_2}{2} \big\{H_{i, r}, X_{j, s}^{\pm}\big\}-b_{i,j}\frac{\ve_1-\ve_2}{2}\big[H_{i,r},X^\pm_{j,s}\big],$}&\\
&\makebox[0pt][l]{$\big[X_{i, r+1}^{\pm}, X_{j, s}^{\pm}\big] - \big[X_{i, r}^{\pm}, X_{j, s+1}^{\pm}\big] = \pm a_{i,j}\frac{\ve_1+\ve_2}{2} \big\{X_{i, r}^{\pm}, X_{j, s}^{\pm}\big\}-b_{i,j}\frac{\ve_1-\ve_2}{2}\big[X^\pm_{i,r},X^\pm_{j,s}\big],$}&\\
&\sum\limits_{\sigma\in S_{1-a_{i,j}}}\prod_{j=1}^{1-a_{i,j}}\ad\big(X_{i,r_{\sigma(j)}}^{\pm}\big)\big(X_{j,s}^{\pm}\big)= 0 &&\qquad\text{if }i \neq j,&\\
&\big[X^\pm_{i,r},X^\pm_{i,s}\big]=0 &&\qquad \text{if }p(i)\neq p(i+1),&\\
&\big[\big[X^\pm_{i-1,r},X^\pm_{i,0}\big],\big[X^\pm_{i,0},X^\pm_{i+1,s}\big]\big]=0&&\qquad \text{if }p(i)\neq p(i+1),&
\end{alignat*}
where the generators $X^\pm_{i, r}$ are odd if $p(i)\neq p(i+1)$, all other generators are even, $S_l$ is the symmetric group of order $l$ and $\{X,Y\}=XY+YX$.
\end{Definition}
In this article, we use the finite presentation of the affine super Yangian given in~\cite[Proposition~2.23]{U4}.
\begin{Theorem}[{\cite[Proposition~2.23]{U4}}]\label{Prop32}
Let us set $\hbar=\ve_1+\ve_2$, $\ve=-(m-n)\ve_1$. The affine super Yangian \smash{$Y_{\hbar,\ve}\big(\widehat{\mathfrak{sl}}(m|n)\big)$} is the associative superalgebra over $\mathbb{C}$ generated by
\begin{equation*}
\big\{X_{i,r}^\pm, H_{i,r}\mid i\in I_{m|n}=\mathbb{Z}/(m+n)\mathbb{Z},\,r=0,1\big\}
\end{equation*}
subject to the following defining relations:
\begin{gather}
[H_{i,r}, H_{j,s}] = 0,\label{Eq2.1}\\
\bigl[X_{i,0}^{+}, X_{j,0}^{-}\bigr] = \delta_{i,j} H_{i, 0},\label{Eq2.2}\\
\bigl[X_{i,1}^{+}, X_{j,0}^{-}\bigr] = \delta_{i,j} H_{i, 1} = \bigl[X_{i,0}^{+}, X_{j,1}^{-}\bigr],\label{Eq2.3}\\
\bigl[H_{i,0}, X_{j,r}^{\pm}\bigr] = \pm a_{i,j} X_{j,r}^{\pm},\label{Eq2.4}\\
\bigl[\tilde{H}_{i,1}, X_{j,0}^{\pm}\bigr] = \pm a_{i,j} X_{j,1}^{\pm}\hspace{25.2mm} \text{if }(i,j)\neq(0,m+n-1),(m+n-1,0),\label{Eq2.5}\\
\bigl[\widetilde{H}_{0,1}, X_{m+n-1,0}^{\pm}\bigr] = \pm\biggl(X_{m+n-1,1}^{\pm}+\biggl(\ve+\frac{\hbar}{2}(m-n)\hbar\biggr) X_{m+n-1, 0}^{\pm}\biggr),\label{Eq2.6}\\
\bigl[\widetilde{H}_{m+n-1,1}, X_{0,0}^{\pm}\bigr] = \pm\biggl(X_{0,1}^{\pm}-\biggl(\ve+\frac{\hbar}{2}(m-n)\hbar\biggr) X_{0, 0}^{\pm}\biggr),\label{Eq2.7}\\
\bigl[X_{i, 1}^{\pm}, X_{j, 0}^{\pm}\bigr] - \bigl[X_{i, 0}^{\pm}, X_{j, 1}^{\pm}\bigr] = \pm a_{i,j}\frac{\hbar}{2} \bigl\{X_{i, 0}^{\pm}, X_{j, 0}^{\pm}\bigr\}\nonumber \\
\hspace{64.15mm} \text{if }(i,j)\neq(0,m+n-1),(m+n-1,0),\label{Eq2.8}\\
\bigl[X_{0, 1}^{\pm}, X_{m+n-1, 0}^{\pm}\bigr] - \bigl[X_{0, 0}^{\pm}, X_{m+n-1, 1}^{\pm}\bigr]\nonumber \\
\qquad =\pm\frac{\hbar}{2} \bigl\{X_{0, 0}^{\pm}, X_{m+n-1, 0}^{\pm}\bigr\}+\biggl(\ve+\frac{\hbar}{2}(m-n)\hbar\biggr) \bigl[X_{0, 0}^{\pm}, X_{m+n-1, 0}^{\pm}\bigr],\label{Eq2.9}\\
\big(\ad X_{i,0}^{\pm}\big)^{1+|a_{i,j}|} \big(X_{j,0}^{\pm}\big)= 0 \hspace{20.75mm} \text{if }i \neq j, \label{Eq2.10}\\
\bigl[X^\pm_{i,0},X^\pm_{i,0}\bigr]=0 \hspace{38.75mm} \text{if }p(i)\neq p(i+1),\label{Eq2.11}\\
\bigl[\bigl[X^\pm_{i-1,0},X^\pm_{i,0}\bigr],\bigl[X^\pm_{i,0},X^\pm_{i+1,0}\bigr]\bigr]=0 \qquad \text{if }p(i)\neq p(i+1),\label{Eq2.12}
\end{gather}
where the generators \smash{$X^\pm_{i, r}$} are odd if $p(i)\neq p(i+1)$, all other generators are even and we set \smash{$\widetilde{H}_{i,1} = H_{i,1}-\frac{\hbar}2 H_{i,0}^2$} and $\{X,Y\}=XY+YX$.
\end{Theorem}
We note that we set $\ve=-(m-n)\ve_2$ in \cite{U4}.

Let us set an anti-automorphism
\begin{equation*}
\omega\colon\ Y_{\hbar,\ve}\big(\widehat{\mathfrak{sl}}(m|n)\big)\to Y_{\hbar,\ve}\big(\widehat{\mathfrak{sl}}(m|n)\big)
\end{equation*}
given by
\begin{equation*}
\omega(H_{i,r})=H_{i,r},\qquad \omega\big(X^+_{i,r}\big)=(-1)^{p(i)}X^-_{i,r},\qquad \omega\big(X^-_{i,r}\big)=(-1)^{p(i+1)}\big(X^+_{i,r}\big).
\end{equation*}

Let us set a Lie superalgebra
\begin{equation*}
\widehat{\mathfrak{gl}}(m|n)=\mathfrak{gl}(m|n)\otimes\mathbb{C}\big[t^{\pm1}\big]\oplus\mathbb{C}c\oplus\mathbb{C}z
\end{equation*}
with the commutator relations:
\begin{gather*}
[E_{i,j}t^r,E_{x,y}t^s]=\delta_{j,x}E_{i,y}t^{r+s}-(-1)^{p(E_{i,j})p(E_{x,y})}\delta_{i,y}E_{x,j}t^{r+s}\\
\hphantom{[E_{i,j}t^r,E_{x,y}t^s]=}{}
+\delta_{r+s,0}r(-1)^{p(i)}\delta_{i,y}\delta_{j,x}c+\delta_{r+s,0}r(-1)^{p(i)+p(x)}\delta_{i,j}\delta_{x,y}z,\\
\big[c,\widehat{\mathfrak{gl}}(m|n)\big]=\big[z,\widehat{\mathfrak{gl}}(m|n)\big]=0,
\end{gather*}
where $E_{i,j}$ is a matrix unit of $\mathfrak{gl}(m|n)$ whose $(u,v)$ component is $\delta_{i,u}\delta_{j,v}$ and the parity $p(E_{i,j})=p(i)+p(j)$.
We also take a subalgebra
\smash{$\widehat{\mathfrak{sl}}(m|n)=\mathfrak{sl}(m|n)\otimes\mathbb{C}\big[t^{\pm1}\big]\oplus\mathbb{C}c$}. Let us set the Chevalley generators of \smash{$\widehat{\mathfrak{sl}}(m|n)$} as
\begin{gather*}
h_i=\begin{cases}
(-1)^{p(m+n)}E_{m+n,m+n}-E_{1,1}+c&\text{if }i=0,\\
(-1)^{p(i)}E_{i,i}-(-1)^{p(i+1)}E_{i+1,i+1}&\text{if }1\leq i\leq m+n-1,
\end{cases}\\
x^+_i=\begin{cases}
E_{m+n,1}t&\text{if }i=0,\\
E_{i,i+1}&\text{if }1\leq i\leq m+n-1,
\end{cases}\\ x^-_i=\begin{cases}
(-1)^{p(m+n)}E_{1,m+n}t^{-1}&\text{if }i=0,\\
(-1)^{p(i)}E_{i+1,i}&\text{if }1\leq i\leq m+n-1.
\end{cases}
\end{gather*}
According to Definition~\ref{Prop32}, there exists a homomorphism from the universal enveloping algebra \smash{$U\bigl(\widehat{\mathfrak{sl}}(m|n)\bigr)$} to $Y_{\hbar,\ve}\bigl(\widehat{\mathfrak{sl}}(m|n)\bigr)$ given by $h_i\mapsto H_{i,0}$ and \smash{$x^\pm_i\mapsto X^\pm_{i,0}$}. We denote the image of~\smash{$x\in U\bigl(\widehat{\mathfrak{sl}}(m|n)\bigr)$}~via this homomorphism by $x$.
Since $\big[x^\pm_i,x^\pm_j\big]=0$ holds for $|i-j|>1$, we obtain \smash{$\big[X^\pm_{i,0},X^\pm_{j,0}\big]=0$}. By~\eqref{Eq2.5}--\eqref{Eq2.7} and the assumption that $m+n\geq 5$, we have
\begin{align}
&\big[X^\pm_{i,r},X^\pm_{j,s}\big]=0 \qquad \text{if }|i-j|>1.\label{Eq2.13}
\end{align}

Let us set a degree on \smash{$Y_{\hbar,\ve}\bigl(\widehat{\mathfrak{sl}}(m|n)\bigr)$} by
\begin{equation*}
{\rm deg}(H_{i,r})=0,\qquad {\rm deg}\big(X^\pm_{i,r}\big)=\begin{cases}
\pm1&\text{if }i=0,\\
0&\text{if }i\neq 0.
\end{cases}
\end{equation*}
In order to define the edge contraction for the affine super Yangian, we need to use the standard degreewise completion defined in \cite{MNT}. For a $\mathbb{Z}$-graded algebra \smash{$A=\bigoplus_{d\in\mathbb{Z}} A_d$}, we can set a~topology on $A$ as the linear topology defined by the sequence of \smash{$\bigl\{\bigoplus_{d\in\mathbb{Z}}\big(\sum\limits_{r>N}A_{d-r}A_r\big)\bigr\}_N$}. This makes~$A$ a compatible degreewise topological algebra. We take the corresponding degreewise completion of $A$ and call it the standard degreewise completion of $A$.

We denote the standard degreewise completion of \smash{$Y_{\hbar,\ve}\bigl(\widehat{\mathfrak{sl}}(m|n)\bigr)$} by \smash{$\widetilde{Y}_{\hbar,\ve}\bigl(\widehat{\mathfrak{sl}}(m|n)\bigr)$}. Using the same degree as \smash{$\widetilde{Y}_{\hbar,\ve}\bigl(\widehat{\mathfrak{sl}}(m|n)\bigr)$}, we define the standard degreewise completion of \smash{$\otimes^2Y_{\hbar,\ve}\bigl(\widehat{\mathfrak{sl}}(m|n)\bigr)$}
and denote it by \smash{$Y_{\hbar,\ve}\bigl(\widehat{\mathfrak{sl}}(m|n)\bigr)\widehat{\otimes} Y_{\hbar,\ve}\bigl(\widehat{\mathfrak{sl}}(m|n)\bigr)$}.
\begin{Theorem}[{\cite[Theorem~4.3]{U2}}]
There exists an algebra homomorphism
\begin{equation*}
\Delta\colon\ Y_{\hbar,\ve}\bigl(\widehat{\mathfrak{sl}}(m|n)\bigr)\to Y_{\hbar,\ve}\bigl(\widehat{\mathfrak{sl}}(m|n)\bigr)\widehat{\otimes} Y_{\hbar,\ve}\bigl(\widehat{\mathfrak{sl}}(m|n)\bigr)
\end{equation*}
determined by
\begin{gather*}
\Delta\big(X^\pm_{j,0}\big)=X^\pm_{j,0}\otimes1+1\otimes X^\pm_{j,0} \qquad\text{for }0\leq j\leq m+n-1,\\
\Delta\big(X^+_{i,1}\big)=X^+_{i,1}\otimes1+1\otimes X^+_{i,1}+B_i \qquad\text{for }1\leq i\leq m+n-1,
\end{gather*}
where we set $B_i$ as
\begin{gather*}
B_i =\hbar\sum\limits_{s \geq 0}  \sum\limits_{u=1}^{i}  \big((-1)^{p(u)}E_{i,u}t^{-s}\otimes E_{u,i+1}t^s \\
\hphantom{B_i =\hbar\sum\limits_{s \geq 0}  \sum\limits_{u=1}^{i}  \bigl(}{}\
-(-1)^{p(u)+p(E_{i,u})p(E_{i+1,u})}E_{u,i+1}t^{-s-1}\otimes E_{i,u}t^{s+1}\big)\\
\hphantom{B_i =}{}
+\hbar\sum\limits_{s \geq 0}  \sum\limits_{u=i+1}^{m+n}  \bigl((-1)^{p(u)}E_{i,u}t^{-s-1}\otimes E_{u,i+1}t^{s+1}\\
\hphantom{B_i =+\hbar\sum\limits_{s \geq 0}  \sum\limits_{u=i+1}^{m+n}  \bigl(}{}\
-(-1)^{p(u)+p(E_{i,u})p(E_{i+1,u})}E_{u,i+1}t^{-s}\otimes E_{i,u}t^{s}\bigr).
\end{gather*}
\end{Theorem}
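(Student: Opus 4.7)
The plan is to show that the prescribed values of $\Delta$ on a sufficient set of generators respect all defining relations \eqref{Eq2.1}--\eqref{Eq2.12} and hence extend to an algebra homomorphism. First I would observe that $Y_{\hbar,\ve}(\widehat{\mathfrak{sl}}(m|n))$ is generated by $\{X^\pm_{j,0}\mid 0\leq j\leq m+n-1\}\cup\{X^+_{i,1}\mid 1\leq i\leq m+n-1\}$: the Cartans $H_{i,0}$ come from \eqref{Eq2.2}, the elements $\widetilde{H}_{i,1}$ with $i\neq 0$ from \eqref{Eq2.3}, the $X^-_{i,1}$ with $i\neq 0$ by commuting a suitable $\widetilde{H}_{j,1}$ with $X^-_{i,0}$ via \eqref{Eq2.5} (using $a_{i,j}\neq 0$ for some neighbor $j$), and finally the affine-node level-one generators $X^\pm_{0,1}$ and $\widetilde{H}_{0,1}$ through the shifted relations \eqref{Eq2.6}--\eqref{Eq2.7}. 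It is therefore enough to define $\Delta$ consistently on this minimal set and verify each defining relation.

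The relations living entirely at level zero---\eqref{Eq2.1}--\eqref{Eq2.2} at $r=s=0$, \eqref{Eq2.4} at $r=0$, and the Serre relations \eqref{Eq2.10}--\eqref{Eq2.12}---are preserved automatically, because $X^\pm_{j,0}\mapsto X^\pm_{j,0}\otimes 1+1\otimes X^\pm_{j,0}$ is just the primitive coproduct on $U(\widehat{\mathfrak{sl}}(m|n))$ and these are the defining relations of that Hopf superalgebra. The bulk of the calculation concerns the quadratic ``Drinfeld-type'' relations \eqref{Eq2.3}, \eqref{Eq2.5}, and \eqref{Eq2.8} with $(i,j)\neq(0,m+n-1),(m+n-1,0)$. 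For these I would substitute $\Delta(X^+_{i,1})=X^+_{i,1}\otimes 1+1\otimes X^+_{i,1}+B_i$ into each side, collect the cross terms produced by $B_i$, and match them against the prescribed right-hand side. The specific super signs $(-1)^{p(u)}$ and $(-1)^{p(E_{i,u})p(E_{i+1,u})}$, together with the split of the sum at $u=i+1$, are precisely what is needed for the contributions supported away from the affine direction to telescope correctly; the supercommutators of $\widehat{\mathfrak{gl}}(m|n)$, including its central terms, do most of the bookkeeping.

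The main obstacle is the affine node, i.e., relations \eqref{Eq2.6}, \eqref{Eq2.7} and \eqref{Eq2.9}. My plan is to \emph{define} $\Delta(\widetilde{H}_{0,1})$ and $\Delta(X^\pm_{0,1})$ inside the completed tensor product by forcing \eqref{Eq2.6}--\eqref{Eq2.7} to hold, and then to verify that the resulting formula has the expected shape $\Delta(X^\pm_{0,1})=X^\pm_{0,1}\otimes 1+1\otimes X^\pm_{0,1}+B_0^\pm$, where $B_0^\pm$ is a loop-around analogue of $B_i$. The nontrivial point is that the central elements $c$ and $z$ of $\widehat{\mathfrak{gl}}(m|n)$ contribute, via the commutator of the infinite sums in $B_i$, a scalar proportional to $(m-n)\hbar$; this is exactly what is absorbed into the shift $\ve+\frac{\hbar}{2}(m-n)\hbar$ appearing in \eqref{Eq2.6}, \eqref{Eq2.7} and \eqref{Eq2.9}. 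I expect this cancellation, together with the super-sign bookkeeping, to be the most delicate point of the proof, and the place where the argument genuinely departs from the non-super affine case of \cite{GNW}.

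Finally, one must check that $B_i$ actually lies in the completion $Y_{\hbar,\ve}(\widehat{\mathfrak{sl}}(m|n))\widehat{\otimes}Y_{\hbar,\ve}(\widehat{\mathfrak{sl}}(m|n))$. Since each summand pairs a factor of $t$-degree $-s$ (or $-s-1$) with a factor of $t$-degree $+s$ (or $+s+1$), the formula for $B_i$ is homogeneous of degree $0$ and converges in the standard degreewise completion. Higher-level Serre-type identities then follow from the already-checked relations by $\ad$-action of the $\widetilde{H}_{j,1}$'s, so no additional independent calculation is required once \eqref{Eq2.3} and \eqref{Eq2.5}--\eqref{Eq2.9} are in place.
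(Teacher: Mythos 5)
This theorem is not proved in the present paper at all---it is imported as Theorem 4.3 of \cite{U2}---and your outline follows essentially the same route as that original proof (the super analogue of the argument of \cite{GNW}): verify the defining relations \eqref{Eq2.1}--\eqref{Eq2.12} on a generating set consisting of the level-zero generators and the $X^+_{i,1}$, with the level-zero relations automatic because $\Delta$ restricts to the primitive coproduct of $U(\widehat{\mathfrak{sl}}(m|n))$, the cross terms $B_i$ handled by direct supercommutator computations in the degreewise completion, and the supertrace contribution $(m-n)\hbar$ of the central extension accounting for the shift $\ve+\frac{\hbar}{2}(m-n)\hbar$ in \eqref{Eq2.6}--\eqref{Eq2.9}. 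One small correction to your treatment of the affine node: since $a_{1,0}\neq0$, the value $\Delta(X^\pm_{0,1})$ is already forced by \eqref{Eq2.5} with $(i,j)=(1,0)$, so \eqref{Eq2.6}--\eqref{Eq2.7} cannot be \emph{imposed} as definitions but must be \emph{verified} against that forced value---and this consistency check is precisely where the central-term cancellation you anticipate actually occurs.
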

Since $\Delta$ satisfies the coassociativity, $\Delta$ can be considered as the coproduct for the affine super Yangian.
\section{Edge contractions for the affine super Yangian}
In \cite{U13}, we gave two edge contractions for the affine super Yangian. In the following theorem, we do not identify $I_{m|n}$ with $\mathbb{Z}/(m+n)\mathbb{Z}$.
\begin{Theorem}[{\cite[Sections 6--9 and Theorem~11.1]{U13}}]\qquad
\begin{enumerate}
\item[$1.$] For $m_2,n_2\geq 0$, $m_1,n_1\geq 2$ and $m_1+n_1\geq 5$, there exists a homomorphism
\begin{gather*}
\Psi_1^{m_1|n_1,m_1+m_2|n_1+n_2}\colon\ Y_{\hbar,\ve}\big(\widehat{\mathfrak{sl}}(m_1|n_1)\big)\to \widetilde{Y}_{\hbar,\ve}\big(\widehat{\mathfrak{sl}}(m_1+m_2|n_1+n_2)\big)
\end{gather*}
given by
\begin{gather*}
\Psi_1^{m_1|n_1,m_1+m_2|n_1+n_2}\big(X^+_{i,0}\big)=\begin{cases}
E_{-n_1,1}t&\text{if }i=-n_1,\\
E_{i,i+1}&\text{if }1\leq i\leq m_1-1,\\
E_{m_1,-1}&\text{if }i=m_1,\\
E_{i,i-1}&\text{if }-n_1+1\leq i\leq -1,
\end{cases}\\
\Psi_1^{m_1|n_1,m_1+m_2|n_1+n_2}\big(X^-_{i,0}\big)=\begin{cases}
-E_{1,-n_1}t^{-1}&\text{if }i=n_1,\\
E_{i+1,i}&\text{if }1\leq i\leq m_1-1,\\
E_{-1,m_1}&\text{if }i=m_1,\\
-E_{i-1,i}&\text{if }-n_1+1\leq i\leq -1
\end{cases}
\end{gather*}
and
\begin{align*}
&\Psi_1^{m_1|n_1,m_1+m_2|n_1+n_2}\big(\widetilde{H}_{1,1}\big)
= \widetilde{H}_{1,1}-P_1+P_2+Q_1-Q_2,\\
&\Psi_1^{m_1|n_1,m_1+m_2|n_1+n_2}\big(X^+_{1,1}\big)
= X^+_{1,1}-P^+_1+Q^+_1,
\end{align*}
where
\begin{gather*}
P_i=\hbar\sum\limits_{v\geq0}  \sum\limits_{z=m_1+1}^{m_1+m_2}  E_{i,z}t^{-v-1} E_{z,i}t^{v+1},\\
Q_i=\hbar\sum\limits_{v\geq0}  \sum\limits_{z=-n_1-n_2}^{-n_1-1}  E_{i,z}t^{-v-1} E_{z,i}t^{v+1},\\
P^+_i\!=\hbar\sum\limits_{v\geq0}  \sum\limits_{z=m_1+1}^{m_1+m_2}  E_{i,z}t^{-v-1} E_{z,i+1}t^{v+1},\\
Q^+_i\!=\hbar\sum\limits_{v\geq0}  \sum\limits_{z=-n_1-n_2}^{-n_1-1}  E_{i,z}t^{-v-1} E_{z,i+1}t^{v+1},
\end{gather*}
\smash{$\widetilde{Y}_{\hbar,\ve}\big(\widehat{\mathfrak{sl}}(m_1+m_2|n_1+n_2)\big)$} is the standard degreewise completion of $Y_{\hbar,\ve}\big(\widehat{\mathfrak{sl}}(m_1+m_2|n_1+n_2)\big)$.
\item[$2.$] For $m_1,n_1\geq 0$, $m_2,n_2\geq 2$ and $m_2+n_2\geq5$, there exists a homomorphism
\begin{gather*}
\Psi_2^{m_2|n_2,m_1+m_2|n_1+n_2}\colon\ Y_{\hbar,\ve+(m_1-n_1)\hbar}\big(\widehat{\mathfrak{sl}}(m_2|n_2)\big)\to \widetilde{Y}_{\hbar,\ve}\big(\widehat{\mathfrak{sl}}(m_1+m_2|n_1+n_2)\big)
\end{gather*}
determined by
\begin{gather*}
\Psi_2^{m_2|n_2,m_1+m_2|n_1+n_2}\big(X^+_{i,0}\big)=\begin{cases}
E_{-n_1-n_2,m_1+1}t&\text{if }i=-n_2,\\
E_{m_1+i,m_1+i+1}&\text{if }1\leq i\leq m_2-1,\\
E_{m_1+m_2,-n_1-1}&\text{if }i=m_2,\\
E_{-n_1+i,-n_1+i-1}&\text{if }-n_2+1\leq i\leq -1,
\end{cases}\\
\Psi_2^{m_2|n_2,m_1+m_2|n_1+n_2}\big(X^-_{i,0}\big)=\begin{cases}
-E_{m_1+1,-n_1-n_2}t^{-1}&\text{if }i=-n_2,\\
E_{m_1+i+1,m_1+i}&\text{if }1\leq i\leq m_2-1,\\
E_{-n_1-1,m_1+m_2}&\text{if }i=m_2,\\
-E_{-n_1+i-1,-n_1+i}&\text{if }-n_2+1\leq i\leq -1,
\end{cases}
\end{gather*}
and
\begin{align*}
&\Psi_2^{m_2|n_2,m_1+m_2|n_1+n_2}\big(\widetilde{H}_{1,1}\big)
= \widetilde{H}_{1+m_1,1}+R_1-R_2+S_1-S_2,\\
&\Psi_2^{m_2|n_2,m_1+m_2|n_1+n_2}\big(X^+_{1,1}\big)
= X^+_{1+m_1,1}+R_1^++S_1^+,
\end{align*}
where
\begin{gather*}
R_i=\hbar\sum\limits_{v\geq0} \sum\limits_{z=-n_1}^{-1} E_{z,i+m_1}t^{-v}E_{i+m_1,z}t^{v},\\
S_i=\hbar\sum\limits_{v\geq0} \sum\limits_{z=1}^{m_1} E_{z,i+m_1}t^{-v-1} E_{i+m_1,z}t^{v+1},\\
R^+_i=\hbar\sum\limits_{v\geq0} \sum\limits_{z=-n_1}^{-1} E_{z,i+1+m_1}t^{-v}E_{i+m_1,z}t^{v},\\
S^+_i=\hbar\sum\limits_{v\geq0} \sum\limits_{z=1}^{m_1} E_{z,i+1+m_1}t^{-v-1} E_{i+m_1,z}t^{v+1}.
\end{gather*}
\end{enumerate}
\end{Theorem}
Similarly to \cite[Theorem~4.2]{U10}, we obtain the following theorem.
\begin{Theorem}\label{Commutativity}
The images of \smash{$\Psi_1^{m_1|n_1,m_1+m_2|n_1+n_2}$} and \smash{$\Psi_2^{m_2|n_2,m_1+m_2|n_1+n_2}$} commute with each other.
\end{Theorem}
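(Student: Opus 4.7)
The plan is to verify $[\Psi_1(a),\Psi_2(b)]=0$ with $a,b$ ranging over generating sets of each Yangian. Since $Y_{\hbar,\ve}(\widehat{\mathfrak{sl}}(m|n))$ is generated, via relations \eqref{Eq2.3} and \eqref{Eq2.5}, by $\{X^{\pm}_{i,0}\mid i\in I_{m|n}\}\cup\{\widetilde{H}_{1,1},X^+_{1,1}\}$, it suffices to check the commutators of the images of these generators.

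The degree-$0$ case is immediate: the $\Psi_1(X^{\pm}_{i,0})$ are matrix units $E_{a,b}t^{k}$ with $a,b$ in the subset $A:=\{1,\ldots,m_1,-1,\ldots,-n_1\}$ of $I_{m_1+m_2|n_1+n_2}$, while the $\Psi_2(X^{\pm}_{j,0})$ use indices in the disjoint subset $B:=\{m_1+1,\ldots,m_1+m_2,-n_1-1,\ldots,-n_1-n_2\}$. All $\delta$-terms and central contributions in the $\widehat{\mathfrak{gl}}(m_1+m_2|n_1+n_2)$ commutator formula then vanish: the off-diagonal nature of the $X^{\pm}_{i,0}$ rules out the $z$-contributions, and $A\cap B=\emptyset$ rules out the others. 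For the mixed level-$1$ versus degree-$0$ brackets such as $[\Psi_1(\widetilde{H}_{1,1}),\Psi_2(X^{\pm}_{j,0})]$, expand $\Psi_1(\widetilde{H}_{1,1})=\widetilde{H}_{1,1}-P_1+P_2+Q_1-Q_2$ and compute in two parts: the bracket $[\widetilde{H}_{1,1},\Psi_2(X^{\pm}_{j,0})]$ using \eqref{Eq2.4}--\eqref{Eq2.7} after writing $\Psi_2(X^{\pm}_{j,0})$ as an iterated commutator of ambient Chevalley generators, and the brackets of the $P_i,Q_i$ with $\Psi_2(X^{\pm}_{j,0})$ inside the completed $U(\widehat{\mathfrak{gl}}(m_1+m_2|n_1+n_2))$, where the $v$-summations telescope. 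The two contributions cancel. The brackets involving $\Psi_1(X^+_{1,1})$ and the symmetric cases with $\Psi_1\leftrightarrow\Psi_2$ are handled analogously.

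Finally, for the level-$1$ versus level-$1$ brackets, apply $\Psi_2$ to the relation $X^+_{1,1}=\tfrac{1}{2}[\widetilde{H}_{1,1},X^+_{1,0}]$ (valid since $a_{1,1}=2$, as $m_2\geq 2$) and use graded Jacobi together with the brackets already established to reduce everything to a single direct computation of $[\Psi_1(\widetilde{H}_{1,1}),\Psi_2(\widetilde{H}_{1,1})]$. In this last bracket the main term $[\widetilde{H}_{1,1},\widetilde{H}_{m_1+1,1}]$ vanishes by \eqref{Eq2.1}, and the cross-brackets between the Yangian level-$1$ generators and the correction sums $P_i,Q_i,R_i,S_i$ cancel pairwise. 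The main technical obstacle is precisely this telescoping in the level-$1$ versus degree-$0$ step and the pairwise cancellation in the level-$1$ versus level-$1$ step, both of which require careful bookkeeping of the super signs $(-1)^{p(\cdot)}$, of the $\ve$-dependent corrections at the affine nodes in \eqref{Eq2.6}--\eqref{Eq2.7}, and of the convergence of the infinite sums in the degreewise completion. The overall structure parallels Theorem 4.2 in \cite{U10}, adapted to the $\mathbb{Z}/2$-graded setting.
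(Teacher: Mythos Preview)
Your overall architecture matches the paper's: reduce to the generating set $\{X^\pm_{i,0}\}\cup\{\widetilde H_{1,1}\}$, dispose of the degree-$0$ brackets by index disjointness, and handle the mixed level-$1$/degree-$0$ brackets by writing the relevant $E_{a,b}t^s$ as an iterated commutator of ambient Chevalley generators, applying \eqref{Eq2.5}--\eqref{Eq2.8}, and watching the $v$-sums telescope. That part is fine and is exactly what the paper does in the proofs of \eqref{gather1} and \eqref{gather2}.

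The gap is in the final step. Your assertion that in $[\Psi_1(\widetilde H_{1,1}),\Psi_2(\widetilde H_{1,1})]$ the cross-terms between $\widetilde H_{1,1}$, $\widetilde H_{m_1+1,1}$ and the correction sums $P_i,Q_i,R_j,S_j$ ``cancel pairwise'' is not correct as stated, and you give no mechanism for evaluating a bracket such as $[\widetilde H_{1,1},R_j]$. Each summand of $R_j$ or $S_j$ is a product $E_{z,j+m_1}t^{-v}\,E_{j+m_1,z}t^{v}$ of two long root vectors, and $\ad\widetilde H_{1,1}$ on such a factor is not determined by \eqref{Eq2.4}--\eqref{Eq2.7} alone without first unraveling it as an iterated commutator whose length depends on $z$ and $v$; there is no telescoping shortcut here.

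The paper does not attempt that direct computation. Instead it introduces $J(h_i)=\widetilde H_{i,1}+A_i-A_{i+1}$ with $A_i$ an explicit infinite loop-algebra sum, and invokes Lemma~\ref{J} (Proposition~4.26 of \cite{U2}), which controls $[J(h_i),x^\pm_\alpha]$ uniformly for all real roots $\alpha$. This replaces each $\widetilde H_{i,1}$ by $J(h_i)-(A_i-A_{i+1})$, disposes of the $J$-contributions via the lemma, and reduces \eqref{gather3} to the purely loop-algebra identity
\[
[P_i-Q_i,\,R_j+S_j]+[A_i,\,R_j+S_j]-[P_i-Q_i,\,A_{m_1+j}]=0
\]
for $i,j\in\{1,2\}$. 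The verification of this identity then occupies most of \S3.3 and is a global cancellation among several dozen triple-product terms, not a pairwise one. Without the $J(h_i)$/$A_i$ device (or an equivalent) your sketch does not actually compute the level-$1$ versus level-$1$ bracket.
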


\section[Affine super Yangians and centralizer algebras of U(hat{gl}(n))]{Affine super Yangians and centralizer algebras of $\boldsymbol{U\bigl(\widehat{\mathfrak{gl}}(n)\bigr)}$}

Following \cite{MNT}, we consider a completion of \smash{$U\bigl(\widehat{\mathfrak{gl}}(m|n)\bigr)/U\bigl(\widehat{\mathfrak{gl}}(m|n)\bigr)(z-1)$}, which is a quotient algebra of \smash{$U\bigl(\widehat{\mathfrak{gl}}(m|n)\bigr)$} divided by the relation $z-1$.
We take the grading of
\[
U\bigl(\widehat{\mathfrak{gl}}(m|n)\bigr)/U\bigl(\widehat{\mathfrak{gl}}(m|n)\bigr)(z-1)
\]
as ${\rm deg}(Xt^s)=s$ and ${\rm deg}(c)=0$. We denote by \smash{$\mathcal{U}\bigl(\widehat{\mathfrak{gl}}(m|n)\bigr)$} the standard degreewise completion of~\smash{$U\bigl(\widehat{\mathfrak{gl}}(m|n)\bigr)/U\bigl(\widehat{\mathfrak{gl}}(m|n)\bigr)(z-1)$}.
\begin{Theorem}[{\cite[Theorem~5.1]{U2}} and {\cite[Theorem~3.29]{U3}}]\label{thm:main}\qquad
\begin{enumerate}
\item[$(1)$] Let $\hat{i}$ be $\sum\limits_{u=1}^i(-1)^{p(u)}$ for $1\leq i\leq m+n-1$. Suppose that $\hbar\neq0$ and $c=\frac{\ve}{\hbar}$.
For a~complex number $a$, there exists an algebra homomorphism
\begin{equation*}
\ev_{\hbar,\ve}^{m|n,a} \colon\ Y_{\hbar,\ve}\bigl(\widehat{\mathfrak{sl}}(m|n)\bigr) \to \mathcal{U}\bigl(\widehat{\mathfrak{gl}}(m|n)\bigr)
\end{equation*}
uniquely determined by
\begin{gather*}
\ev_{\hbar,\ve}^{m|n,a}\big(X_{i,0}^{+}\big) = \begin{cases}
E_{m+n,1}t&\text{if }i=0,\\
E_{i,i+1}&\text{if }1\leq i\leq m+n-1,
\end{cases} \\
\ev_{\hbar,\ve}^{m|n,a}\big(X_{i,0}^{-}\big) = \begin{cases}
(-1)^{p(m+n)}E_{1,m+n}t^{-1}&\text{if }i=0,\\
(-1)^{p(i)}E_{i+1,i}&\text{if }1\leq i\leq m+n-1,
\end{cases}
\end{gather*}
and
\begin{align*}
\ev_{\hbar,\ve}^{m|n,a}\big(X^+_{i,1}\big)&=\biggl(a-\frac{\hat{i}}{2}\hbar\biggr) E_{i,i+1}+ \hbar \sum\limits_{s \geq 0} \sum\limits_{u=1}^{i}  (-1)^{p(u)}E_{i,u}t^{-s}E_{u,i+1}t^s\\
&\quad+\hbar \sum\limits_{s \geq 0}  \sum\limits_{u=i+1}^{m+n}  (-1)^{p(u)}E_{i,u}t^{-s-1}E_{u,i+1}t^{s+1}\qquad \text{for }i\neq0.
\end{align*}
\item[$(2)$] In the case that $\ve\neq 0$, the image of the evaluation map is dense in $\mathcal{U}\bigl(\widehat{\mathfrak{gl}}(m|n)\bigr)$.
\end{enumerate}
\end{Theorem}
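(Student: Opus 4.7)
The plan is to verify the statement in two parts, corresponding to the two claims of the theorem.

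Part (1), existence and uniqueness. Uniqueness is automatic because the affine super Yangian $Y_{\hbar,\ve}(\widehat{\mathfrak{sl}}(m|n))$ is generated by $\{X^\pm_{i,0}\}_{i \in I_{m|n}}$ together with a single element $\widetilde{H}_{1,1}$, and the rest of the $\widetilde{H}_{j,1}$ and $X^+_{j,1}$ can be recovered by iterating the defining relations \eqref{Eq2.5}--\eqref{Eq2.9}. For existence I would verify the defining relations \eqref{Eq2.1}--\eqref{Eq2.12} one by one on the proposed images. The relations involving only the zeroth Chevalley generators, namely \eqref{Eq2.1}, \eqref{Eq2.2}, \eqref{Eq2.4}, and \eqref{Eq2.10}--\eqref{Eq2.12}, are immediate because the degree-zero generators are sent to the standard Chevalley embedding of $\widehat{\mathfrak{sl}}(m|n)$ into $\mathcal{U}(\widehat{\mathfrak{gl}}(m|n))$. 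The genuine content is in \eqref{Eq2.3} and \eqref{Eq2.5}--\eqref{Eq2.9}, which involve degree-one generators.

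For those, the key computation is the bracket $[\ev(X^+_{i,1}), \ev(X^-_{j,0})]$. Expanding the infinite sum defining $\ev(X^+_{i,1})$ and applying the supercommutator of matrix units in $\widehat{\mathfrak{gl}}(m|n)$, the only nonvanishing contributions come from terms where the inner matrix indices collapse and the central elements $c$ and $z$ act on the double summation. The resulting telescoping produces the prescribed form of $\widetilde{H}_{i,1}$ up to the Cartan-quadratic shift $\tfrac{\hbar}{2}H_{i,0}^2$, yielding \eqref{Eq2.3}. The symmetric computation of $[\ev(\widetilde{H}_{i,1}), \ev(X^\pm_{j,0})]$ then yields \eqref{Eq2.5}--\eqref{Eq2.7}; the scalar $\ve + \tfrac{\hbar}{2}(m-n)\hbar$ appearing at the affine node arises precisely from the central charge $c = \ve/\hbar$ that is picked up when the quadratic sum wraps around the affine cycle between indices $1$ and $m+n$. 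The main obstacle is the quadratic $+/+$ relations \eqref{Eq2.8} and \eqref{Eq2.9}: one must rearrange two different infinite normal-ordered sums whose "bulk" contributions cancel after a delicate index manipulation, leaving exactly the expected anti-symmetrizer anomaly $\pm a_{i,j} \tfrac{\hbar}{2}\{X^\pm_{i,0}, X^\pm_{j,0}\}$. This mirrors the Drinfeld-style derivation of the current presentation from the RTT presentation in the finite setting, now with the supersymmetric sign conventions $(-1)^{p(u)}$ threading through the computation and with the affine completion replacing the finite polynomial setup.

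Part (2), density when $\ve \neq 0$. The image already contains the image of $U(\widehat{\mathfrak{sl}}(m|n))$ modulo $z - 1$, hence every $E_{i,j}t^r$ with $(i,j)$ corresponding to a simple affine root, and by iterated brackets every matrix unit $E_{a,b}t^r$ where $(a,b)$ is a positive real root. The image of $\widetilde{H}_{i,1}$ supplies the normal-ordered quadratic expressions; the commutator of such a quadratic element with a single degree-zero matrix unit yields further matrix units at arbitrary powers of $t$, because each round of bracketing shifts the loop degree by $\pm 1$ while the hypothesis $\ve \neq 0$ guarantees that the associated scalar coefficients are non-degenerate (so no obstruction appears at the affine node). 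Combining these, one checks that for every fixed total degree $d$ the image surjects onto the degree-$d$ piece modulo the degree-$(d+1)$ filtration of $\mathcal{U}(\widehat{\mathfrak{gl}}(m|n))$. Density in the degreewise-completed topology then follows by the standard completion argument. The potential difficulty here is the bookkeeping across the affine node in the super case, where the parity $p(m+n)$ and the cyclic identification of $I_{m|n}$ with $\mathbb{Z}/(m+n)\mathbb{Z}$ can introduce unexpected sign flips; but these are consistent with the signs already appearing in the formula for $\ev_{\hbar,\ve}^{m|n,a}(X^\pm_{i,0})$ and in the definition of $\widehat{\mathfrak{gl}}(m|n)$.
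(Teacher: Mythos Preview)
The paper does not supply its own proof of this theorem; it is quoted verbatim from the author's earlier work (Theorem~5.1 in \cite{U2} for part~(1) and Theorem~3.29 in \cite{U3} for part~(2)). There is therefore nothing in the present paper to compare your attempt against.

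That said, your outline for part~(1) follows the expected template: the degree-zero relations hold because the assignment on $X^\pm_{i,0}$ is the standard Chevalley embedding, and the degree-one relations \eqref{Eq2.3}, \eqref{Eq2.5}--\eqref{Eq2.9} must be checked by direct computation with the normal-ordered quadratic sums. But the proposal stops at the level of a plan: sentences such as ``the resulting telescoping produces the prescribed form'' and ``bulk contributions cancel after a delicate index manipulation'' are placeholders for calculations that constitute the entire content of the proof. In particular, verifying \eqref{Eq2.8}--\eqref{Eq2.9} at the affine node requires tracking the central terms $c$ and $z$ (with $z=1$) and the super signs $(-1)^{p(u)}$ simultaneously; nothing in your write-up shows that you have actually carried this out.

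For part~(2) there is a more concrete gap. You assert that bracketing the image of $\widetilde{H}_{i,1}$ with a degree-zero matrix unit ``shifts the loop degree by $\pm 1$'', but the image of $\widetilde{H}_{i,1}$ has total loop degree zero, so bracketing with a loop-degree-zero element cannot change the loop degree. The image of $U(\widehat{\mathfrak{sl}}(m|n))$ already contains all off-diagonal $E_{a,b}t^r$ and the traceless diagonal combinations; what is genuinely missing in $\mathcal{U}(\widehat{\mathfrak{gl}}(m|n))$ is the trace direction $\sum_i(-1)^{p(i)}E_{i,i}t^r$ for $r\neq 0$. Extracting this from the image of $X^+_{i,1}$ requires a specific commutator computation in which the hypothesis $\ve\neq 0$ (equivalently $c\neq 0$) makes a particular coefficient nonzero; your sketch neither identifies this element nor explains where $\ve\neq 0$ actually enters.
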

Let us set $\iota_1$ as an embedding from $\mathcal{U}\bigl(\widehat{\mathfrak{gl}}(m_1|n_1)\bigr)$ to $\mathcal{U}\bigl(\widehat{\mathfrak{gl}}(m_1+m_2|n_1+n_2)\bigr)$ by
\begin{equation*}
E_{i,j}t^s\mapsto E_{i,j}t^s,\qquad c\mapsto c
\end{equation*}
for $i,j\in I$. For an associative superalgebra $A$ and its subalgebra $B$, we also define the centralizer~algebra
\begin{equation*}
C(A,B)=\{x\in A\mid[x,B]=0\}.
\end{equation*}
\begin{Theorem}\quad
\begin{enumerate}
\item[$(1)$] Let us assume that $c=\frac{\ve}{\hbar}$. The following relation holds:
\begin{equation*}
\iota_1\circ\ev_{\hbar,\ve}^{m_1|n_1,a}=\ev_{\hbar,\ve}^{m_1+m_2|n_1+n_2,a}\circ\Psi_1^{m_1|n_1,m_1+m_2|n_1+n_2}
\end{equation*}

\item[$(2)$] In the case that $c=\frac{\ve}{\hbar}$ and $c\neq0$, the image of \smash{$\ev_{\hbar,\ve}^{m_1+m_2|n_1+n_2,a}\circ\Psi_2^{m_2|n_2,m_1+m_2|n_1+n_2}$} is~contained in \smash{$C\big(\mathcal{U}\bigl(\widehat{\mathfrak{gl}}(m_1+m_2|n_1+n_2)\big),\mathcal{U} \big(\widehat{\mathfrak{gl}}(m_1|n_1)\big)\big)$}.
\end{enumerate}
\end{Theorem}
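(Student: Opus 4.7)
The plan is to prove the two parts separately, leveraging the structural results already established. For part (1), both $\iota_1 \circ \ev_{\hbar,\ve}^{m_1|n_1,a}$ and $\ev_{\hbar,\ve}^{m_1+m_2|n_1+n_2,a} \circ \Psi_1^{m_1|n_1,m_1+m_2|n_1+n_2}$ are algebra homomorphisms out of $Y_{\hbar,\ve}(\widehat{\mathfrak{sl}}(m_1|n_1))$, so it suffices to verify they agree on the generators $\{X^{\pm}_{i,0}\}_{i \in I_{m_1|n_1}}$ and $\widetilde{H}_{1,1}$. On the degree-zero generators both sides produce identical matrix units in $\mathfrak{gl}(m_1+m_2|n_1+n_2)\otimes\mathbb{C}[t^{\pm1}]$ by comparing the explicit defining formulas of $\Psi_1$, $\ev_{\hbar,\ve}^{m_1|n_1,a}$, and $\iota_1$ case by case (note that $\iota_1$ sends $E_{i,j}t^s$ to itself).

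For the check on $\widetilde{H}_{1,1}$, I would use the formula $\Psi_1(\widetilde{H}_{1,1}) = \widetilde{H}_{1,1} - P_1 + P_2 + Q_1 - Q_2$ together with the explicit expression for $\ev_{\hbar,\ve}^{m_1+m_2|n_1+n_2,a}(\widetilde{H}_{1,1})$ obtained from $[\ev(X^+_{1,1}),\ev(X^-_{1,0})] - \tfrac{\hbar}{2}\ev(H_{1,0})^2$. The key step is to split the sums over $u \in I_{m_1+m_2|n_1+n_2}$ appearing in the evaluation formula into the piece with $u \in I_{m_1|n_1}$, which reproduces $\iota_1(\ev_{\hbar,\ve}^{m_1|n_1,a}(\widetilde{H}_{1,1}))$, and the complementary piece with $u \in \{m_1+1,\dots,m_1+m_2\} \cup \{-n_1-1,\dots,-n_1-n_2\}$; a direct matching shows that the complementary piece coincides term by term with $\ev(-P_1 + P_2 + Q_1 - Q_2)$, which yields the required identity.

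For part (2), the strategy bypasses direct computation on generators. By Theorem~\ref{Commutativity}, the images of $\Psi_1^{m_1|n_1,m_1+m_2|n_1+n_2}$ and $\Psi_2^{m_2|n_2,m_1+m_2|n_1+n_2}$ commute inside $\widetilde{Y}_{\hbar,\ve}(\widehat{\mathfrak{sl}}(m_1+m_2|n_1+n_2))$. Since the evaluation map is an algebra homomorphism that extends to the degreewise completion (it preserves the grading), the images of $\ev \circ \Psi_1$ and $\ev \circ \Psi_2$ commute in $\mathcal{U}(\widehat{\mathfrak{gl}}(m_1+m_2|n_1+n_2))$. By part (1), the image of $\ev_{\hbar,\ve}^{m_1+m_2|n_1+n_2,a} \circ \Psi_1^{m_1|n_1,m_1+m_2|n_1+n_2}$ equals $\iota_1(\mathrm{Im}(\ev_{\hbar,\ve}^{m_1|n_1,a}))$, and under the hypothesis $c = \ve/\hbar \neq 0$ we have $\ve \neq 0$, so Theorem~\ref{thm:main}(2) ensures that $\mathrm{Im}(\ev_{\hbar,\ve}^{m_1|n_1,a})$ is dense in $\mathcal{U}(\widehat{\mathfrak{gl}}(m_1|n_1))$. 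Consequently $\iota_1(\mathrm{Im}(\ev_{\hbar,\ve}^{m_1|n_1,a}))$ is dense in $\iota_1(\mathcal{U}(\widehat{\mathfrak{gl}}(m_1|n_1)))$. For any $x$ in the image of $\ev \circ \Psi_2$, the set $\{y : [x,y] = 0\}$ is closed in the degreewise topology (since multiplication is separately continuous), so it contains the closure of the dense subset, giving the centralizer inclusion.

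I expect the main obstacle to lie in part (1) for $\widetilde{H}_{1,1}$: aligning index ranges, parities, and powers of $t$ between the "complementary" sums in $\ev(\widetilde{H}_{1,1})$ and the correction terms $-P_1 + P_2 + Q_1 - Q_2$ requires careful bookkeeping, analogous to but less involved than the computation carried out in Section 3.3. Once that identity is pinned down, part (2) follows essentially formally from Theorem~\ref{Commutativity} and Theorem~\ref{thm:main}(2), with the continuity of the bracket in the degreewise completion as the only analytic input.
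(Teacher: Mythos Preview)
Your proposal is correct and follows essentially the same approach as the paper: check part (1) on generators and deduce part (2) from Theorem~\ref{Commutativity} together with the density statement in Theorem~\ref{thm:main}(2). The only minor difference is that the paper verifies (1) on the generator $X^+_{1,1}$ rather than $\widetilde{H}_{1,1}$, which is slightly more direct since both $\Psi_1$ and $\ev_{\hbar,\ve}^{m|n,a}$ are given explicitly on $X^+_{1,1}$, avoiding the intermediate computation of $\ev(\widetilde{H}_{1,1})$ via the commutator.
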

\begin{proof}
(1) Since the affine super Yangian $Y_{\hbar,\ve}\big(\widehat{\mathfrak{sl}}(m_1|n_1)\big)$ is generated by \smash{$\big\{X^\pm_{i,0}\big\}_{i\in I_{m_1|n_1}}$} and $X^+_{1,1}$, it is enough to show the following relations:
\begin{gather*}
\iota_1\circ\ev_{\hbar,\ve}^{m_1|n_1,a}\big(X^\pm_{i,0}\big)=\ev_{\hbar,\ve}^{m_1+m_2|n_1+n_2,a}\circ\Psi_1^{m_1|n_1,m_1+m_2|n_1+n_2}\big(X^\pm_{i,0}\big),\\
\iota_1\circ\ev_{\hbar,\ve}^{m_1|n_1,a}\big(X^+_{1,1}\big)=\ev_{\hbar,\ve}^{m_1+m_2|n_1+n_2,a}\circ\Psi_1^{m_1|n_1,m_1+m_2|n_1+n_2}\big(X^+_{1,1}\big).
\end{gather*}
These relations follow from the definition of \smash{$\ev_{\hbar,\ve}^{m_1|n_1,a}$} and \smash{$\Psi_1^{m_1|n_1,m_1+m_2|n_1+n_2}$}.

(2) By Theorem~\ref{Commutativity}, the image of
\[
\ev_{\hbar,\ve}^{m_1+m_2|n_1+n_2,a}\circ\Psi_2^{m_2|n_2,m_1+m_2|n_1+n_2}
\]
is commutative with the one of
\[
\ev_{\hbar,\ve}^{m_1+m_2|n_1+n_2,a}\circ\Psi_1^{m_1|n_1,m_1+m_2|n_1+n_2}.
\]
By item~(1) and Theorem~\ref{thm:main}\,(2), the completion~of the image of
\[
\ev_{\hbar,\ve}^{m_1+m_2|n_1+n_2,a}\circ\Psi_1^{m_1|n_1,m_1+m_2|n_1+n_2}
\]
 coincides with \smash{$\iota_1\big(\mathcal{U}\big(\widehat{\mathfrak{gl}}(m_1|n_1)\big)\big)$}. Thus, the image of
 \[
 \ev_{\hbar,\ve}^{m_1+m_2|n_1+n_2,a}\circ\Psi_2^{m_2|n_2,m_1+m_2|n_1+n_2}
  \]
  is contained in the centralizer algebra $C\big(\mathcal{U}\big(\smash{\widehat{\mathfrak{gl}}}(m_1+m_2|n_1+n_2)\big), \mathcal{U}\big(\smash{\widehat{\mathfrak{gl}}}(m_1|n_1)\big)\big)$.
\end{proof}

\section[W-superalgebras of type A]{$\boldsymbol{W}$-superalgebras of type $\boldsymbol{A}$}
Let us set some notations of a vertex superalgebra. For a vertex superalgebra $V$, we denote the generating field associated with $v\in V$ by \smash{$v(z)=\sum\limits_{s\in\mathbb{Z}} v_{(s)}z^{-s-1}$}. We also denote the operator product expansion (OPE) of $V$ by
\begin{equation*}
u(z)v(w)\sim\sum\limits_{s\geq0}  \frac{(u_{(s)}v)(w)}{(z-w)^{s+1}}
\end{equation*}
for all $u, v\in V$. We denote the vacuum vector (resp.\ the translation operator) by $|0\rangle$ (resp.\ $\partial$).

We denote the universal affine vertex superalgebra associated with a finite dimensional Lie superalgebra $\mathfrak{g}$ and its inner product $\kappa$ by $V^\kappa(\mathfrak{g})$. By the PBW theorem, we can identify $V^\kappa(\mathfrak{g})$ with $U\big(t^{-1}\mathfrak{g}\big[t^{-1}\big]\big)$. In order to simplify the notation, here after, we denote the generating field~$\big(ut^{-1}\big)(z)$ as $u(z)$ for $u\in\mathfrak{g}$. By the definition of $V^\kappa(\mathfrak{g})$, the generating fields $u(z)$ and~$v(z)$ satisfy the OPE
\begin{gather*}
u(z)v(w)\sim\frac{[u,v](w)}{z-w}+\frac{\kappa(u,v)}{(z-w)^2}\label{OPE1}
\end{gather*}
for all $u,v\in\mathfrak{g}$.

We take two positive integers and their partitions:
\begin{align}
M&=\sum\limits_{i=1}^lu_i,\qquad u_1\geq u_{2}\geq\dots\geq u_l\geq0,\nonumber\\
N&=\sum\limits_{i=1}^lq_i,\qquad q_1\geq q_{2}\geq\dots\geq q_l\geq0,\label{cond:q}
\end{align}
satisfying that $M\neq N$ and $u_l+q_l\neq0$.
For $1\leq i\leq M$ and $-N\leq j\leq -1$, we set ${1\leq \col(i),\col(j)\leq l}$, $u_1-u_{\col(i)}<\row(i)\leq u_1$ and $-q_1\leq\row(j)<-q_1+q_{\col(j)}$ satisfying
\begin{gather*}
\sum\limits_{b=1}^{\col(i)-1}u_b<i\leq\sum\limits_{b=1}^{\col(i)}u_b,\qquad
\row(i)=i-\sum\limits_{b=1}^{\col(i)-1}u_b+u_1-u_{\col(i)},\\
\sum\limits_{b=1}^{\col(j)-1}q_b<-j\leq\sum\limits_{b=1}^{\col(j)}q_b,\qquad \row(j)=j+\sum\limits_{b=1}^{\col(j)-1}q_b-q_1+q_{\col(j)}.
\end{gather*}
The definition of $\col$ and $\row$ can be interpreted by using the Young diagram. For the partition $(u_1,u_2,\dots,u_l)$ (resp.\ $(q_1,q_2,\dots,q_l)$), we define $D_M$ (resp.\ $D_N$) as the Young diagram in French style corresponding to this partition. We enumerate boxes in $D_M$ (resp.\ $D_N$) by $1,2,\dots,M$ (resp.\ $-1,-2,\dots,-N$) down columns from left to right. Then, $\col(i)$ denotes the column in which the number $i$ is located, while $\row(i)$ denotes the column number of the number $i$ from the top.

Let us set a Lie superalgebra \smash{$\mathfrak{gl}(M|N)=\bigoplus_{i,j\in I_{M|N}}\mathbb{C}e_{i,j}$} whose commutator relations are determined by
\begin{equation*}
[e_{i,j},e_{x,y}]=\delta_{j,x}e_{i,y}-(-1)^{p(e_{i,j})p(e_{x,y})}\delta_{i,y}e_{x,j},
\end{equation*}
where $p(e_{i,j})=p(i)+p(j)$.
We take a nilpotent element $f\in\mathfrak{gl}(M|N)$ as
\begin{equation*}
f=\sum\limits_{i\in I_{M|N}}  e_{\hat{i},i},
\end{equation*}
where the integer $\hat{i}\in I_{M|N}$ are determined by
$\col(\hat{i})=\col(i)+1$, $\row(\hat{i})=\row(i)$.

\begin{Remark}
Actually, if the nilpotent element has a good grading (see \cite[Theorem~7.2]{Ho}), the discussion after here works well (see \cite{U11} and \cite{U13}). For the simplicity, we assume the condition~\eqref{cond:q}.
\end{Remark}
Similarly to \smash{$\hat{i}$}, we set \smash{$\tilde{i}\in I_{M|N}$} as
$\col(\tilde{i})=\col(i)-1$, $\row(\tilde{i})=\row(i)$.
We also fix an inner product of the Lie superalgebra $\mathfrak{gl}(M|N)$ determined by
\begin{equation*}
(e_{i,j}|e_{x,y})=k\delta_{i,y}\delta_{x,j}(-1)^{p(i)}+\delta_{i,j}\delta_{x,y}(-1)^{p(i)+p(x)}.
\end{equation*}
We set two Lie superalgebra
\begin{equation*}
\mathfrak{b}=\bigoplus_{\substack{i,j\in I_{M|N},\\\col(i)\geq\col(j)}}\mathbb{C}e_{i,j},\qquad
\mathfrak{a}=\mathfrak{b}\oplus\bigoplus_{\substack{i,j\in I_{M|N},\\\col(i)>\col(j)}}\mathbb{C}\psi_{i,j},
\end{equation*}
whose commutator relations are defined by
\begin{gather*}
[e_{i,j},\psi_{x,y}]=\delta_{j,x}\psi_{i,y}-\delta_{i,y}(-1)^{p(e_{i,j})(p(e_{x,y})+1)}\psi_{x,j},\\
[\psi_{i,j},\psi_{x,y}]=\delta_{j,x}\psi_{i,y}-\delta_{i,y}(-1)^{(p(e_{i,j})+1)(p(e_{x,y})+1)}\psi_{x,j},
\end{gather*}
where the parity of $e_{i,j}$ is $p(i)+p(j)$ and the parity of $\psi_{i,j}$ is $p(i)+p(j)+1$.
We also set an inner product on $\mathfrak{b}$ and $\mathfrak{a}$ by
\begin{equation*}
\kappa(e_{i,j},e_{p,q})=(e_{i,j}|e_{p,q}),\qquad \kappa(e_{i,j},\psi_{p,q})=\kappa(\psi_{i,j},\psi_{p,q})=0.
\end{equation*}
We denote the universal affine vertex superalgebras associated with $\mathfrak{b}$ and $\mathfrak{a}$ by $V^\kappa(\mathfrak{b})$ and $V^\kappa(\mathfrak{a})$. We also sometimes denote the elements $e_{i,j}t^{-s}\in V^\kappa(\mathfrak{b})\subset V^\kappa(\mathfrak{a})$ and $\psi_{i,j}t^{-s}\in V^\kappa(\mathfrak{a})$ by $e_{i,j}[-s]$ and $\psi_{i,j}[-s]$ respectively and $a_{(-1)}b$ by $ab$.
Let us define an odd differential $d_0 \colon V^{\kappa}(\mathfrak{b})\to V^{\kappa}(\mathfrak{a})$ determined by
\begin{gather*}
d_01=0,\\
[d_0,\partial]=0,
\\
d_0(e_{i,j}[-1]])
=\sum\limits_{\substack{\col(i)>\col(r)\geq\col(j)}}  (-1)^{p(e_{i,j})+p(e_{i,r})p(e_{r,j})}e_{r,j}[-1]\psi_{i,r}[-1]\nonumber\\
\hphantom{d_0(e_{i,j}[-1]])=}{}-\sum\limits_{\substack{\col(j)<\col(r)\leq\col(i)}}  (-1)^{p(e_{i,r})p(e_{r,j})}\psi_{r,j}[-1]e_{i,r}[-1]+\delta(\col(i)\nonumber\\
\hphantom{d_0(e_{i,j}[-1]])}{}>\col(j))(-1)^{p(i)}\alpha_{\col(i)}\psi_{i,j}[-2]+(-1)^{p(i)}\psi_{\hat{i},j}[-1]-(-1)^{p(i)}\psi_{i,\tilde{j}}[-1].
\end{gather*}
By using \cite[Theorem~2.4]{KRW}, we can define the $W$-algebra $\mathcal{W}^k(\mathfrak{gl}(M|N),f)$ as follows.
\begin{Definition}
The $W$-algebra $\mathcal{W}^k(\mathfrak{gl}(M|N),f)$ is the vertex subalgebra of $V^\kappa(\mathfrak{b})$ defined by
\begin{equation*}
\mathcal{W}^k(\mathfrak{gl}(M|N),f)=\{y\in V^\kappa(\mathfrak{b})\mid d_0(y)=0\}.
\end{equation*}
In the case that $u_1=u_2=\dots=u_l$, $q_1=q_2=\dots=q_l$, we call $\mathcal{W}^k(\mathfrak{gl}(M|N),f)$ the rectangular $W$-superalgebra of type $A$ and denote it by $\mathcal{W}^k\big(\mathfrak{gl}(ml|nl),\big(l^{m|n}\big)\big)$.
\end{Definition}
We give one example. In the case $l=2$, we can write $f$ and $d_0$ as
\begin{align*}
f&=\sum\limits_{1\leq z\leq u_2}  e_{z+u_1,z+u_1-u_2}+\sum\limits_{1\leq z\leq q_2}  e_{-z-q_1,-z-q_1+q_2}
\end{align*}
and
\begin{gather*}
d_0(e_{i,j}[-1])=\delta(\col(i)=1)e_{\hat{i},j}[-1]-\delta(\col(i)=2)e_{i,\tilde{j}}[-1]\qquad \text{if }\col(i)=\col(j),\\
d_0(e_{i,j}[-1])=\sum\limits_{r=1}^{u_1}  (-1)^{p(e_{i,j})+p(i)p(j)}e_{r,j}[-1]\psi_{i,r}[-1]-\sum\limits_{r=-q_1}^{-1}  (-1)^{p(i)p(j)}e_{r,j}[-1]\psi_{i,r}[-1]\nonumber\\
\hphantom{d_0(e_{i,j}[-1])=}{}-\sum\limits_{r=u_1+1}^{u_1+u_2}  (-1)^{p(i)p(j)}\psi_{r,j}[-1]e_{i,r}[-1]\\
\hphantom{d_0(e_{i,j}[-1])=}{}-\sum\limits_{r=-q_1-q_2}^{-q_1-1}  (-1)^{(p(i)+1)(p(j)+1)}\psi_{r,j}[-1]e_{i,r}[-1]\nonumber\\
\hphantom{d_0(e_{i,j}[-1])=}{}+(-1)^{p(i)}\alpha_2\psi_{i,j}[-2] \qquad\text{if }\col(i)=2,\col(j)=1,
\end{gather*}
where
\begin{gather*}
\hat{i}=\begin{cases}
i+u_2&\text{if }u_1-u_2+1\leq i\leq u_1,\\
i-q_2&\text{if }-q_1\leq i\leq-q_1+q_2-1,
\end{cases}\qquad \text{and} \\
\tilde{i}=\begin{cases}
i-u_2&\text{if }u_1+1\leq i\leq u_1+u_2,\\
i+q_2&\text{if }-q_1-q_2\leq i\leq -q_1-1.
\end{cases}
\end{gather*}
We define the set
\begin{equation*}
I_s=\{1,\dots,u_s,-1,\dots,-q_s\}.
\end{equation*}
We constructed two kinds of elements \smash{$W^{(1)}_{a,b},W^{(2)}_{a,b}\in\mathcal{W}^k(\mathfrak{gl}(M|N),f)$} for $a,b\in I_s\setminus I_{s+1}$. Let us~set
\begin{gather*}
\alpha_s=k+M-N-u_s+q_s,\qquad \gamma_a=\sum\limits_{s=a+1}^{l}  \alpha_{s}.
\end{gather*}
and denote $e_{i,j}$ by \smash{$e^{(r)}_{a,b}$} if $\col(i)=\col(j)=r$, $\row(i)=a$, $\row(j)=b$.
\begin{Theorem}[{\cite[Theorem~10.22]{U13}}]
The following elements of $\bigotimes_{1\leq s\leq l} V^{\kappa_s}(\mathfrak{gl}(q_s))$
\begin{equation*}
\bigl\{W^{(1)}_{a,b},W^{(2)}_{a,b}\mid a,b\in I_s\setminus I_{s-1}\bigr\}
\end{equation*}
 are contained in \smash{$\mu\big(\mathcal{W}^k(\mathfrak{gl}(M|N),f)\big)$}:
\begin{gather*}
W^{(1)}_{a,b}=\sum\limits_{1\leq r\leq s}e^{(r)}_{a,b}[-1],\\
W^{(2)}_{a,b}=\sum\limits_{\substack{\col(i)=\col(j)+1\\\row(i)=a,\row(j)=b}}e_{i,j}-\sum\limits_{1\leq r\leq s}\gamma_{r}e^{(r)}_{a,b}[-2]\\
\hphantom{W^{(2)}_{a,b}=}{}+\sum\limits_{\substack{1\leq r_1<r_2\leq s\\x> u_1-u_s}}  (-1)^{p(x)+p(e_{i,v})p(e_{x,j})}e^{(r_1)}_{x,b}[-1]e^{(r_2)}_{a,x}[-1]\\
\hphantom{W^{(2)}_{a,b}=}{}+\sum\limits_{\substack{1\leq r_1<r_2\leq s\\x< -q_1+q_s}}  (-1)^{p(x)+p(e_{a,x})p(e_{b,x})}e^{(r_1)}_{x,b}[-1]e^{(r_2)}_{a,x}[-1]\\
\hphantom{W^{(2)}_{a,b}=}{}-\sum\limits_{\substack{r_1\geq r_2\\q_s-q_1\leq x\leq q_{r_1}-q_1\\\row(i)=a,\row(j)=b}}  (-1)^{p(x)+p(e_{a,x})p(e_{x,b})}e^{(r_1)}_{x,b}[-1]e^{(r_2)}_{a,x}[-1]\\
\hphantom{W^{(2)}_{a,b}=}{}-\sum\limits_{\substack{r_1\geq r_2\\u_1-u_{r_1}\leq x\leq u_1-u_s}}  (-1)^{p(x)+p(e_{a,x})p(e_{x,b})}e^{(r_1)}_{x,b}[-1]e^{(r_2)}_{a,x}[-1].
\end{gather*}
\end{Theorem}
In the rectangular case, we have computed the OPEs in \cite[Section~4]{U4}.
\begin{Corollary}\label{Cor}\quad
\begin{enumerate}
\item[$(1)$] Assume that $u_1<u_2$, $q_1<q_2$. Then, we have an embedding
\begin{equation*}
\iota_2\colon\ \mathcal{W}^{\widehat{k}}\big(\mathfrak{gl}(2u_1|2q_1),\big(2^{u_1|q_1}\big)\big)\to\mathcal{W}^k\big(\mathfrak{gl}(2u_2|2q_2),\big(2^{u_2|q_2}\big)\big),\qquad W^{(r)}_{i,j}\mapsto W^{(r)}_{i,j},
\end{equation*}
where $\widehat{k}=k+u_2-q_2-u_1+q_1$.

\item[$(2)$] Assume that $u_1-u_2,q_1-q_2>0$ and
\begin{equation*}
\kappa_1(E_{i,j},E_{x,y})=\delta_{i,y}\delta_{j,x}(-1)^{p(i)}\alpha_1+(-1)^{p(i)+p(x)}\delta_{i,j}\delta_{x,y}.
\end{equation*}
Then, we can define an embedding
\begin{equation*}
\iota_3\colon\ V^{\kappa_1}(\mathfrak{gl}(u_1-u_2|q_1-q_2))\to\mathcal{W}^k(\mathfrak{gl}(M|N),f),\qquad E_{i,j}[-s]\mapsto W^{(1)}_{i,j}[-s].
\end{equation*}

\item[$(3)$] Assume that $u_1=u_2>u_3$, $q_1-q_2>0$. Then, we have an embedding
\begin{gather*}
\iota_4\colon\ \mathcal{W}^{\widetilde{k}}\big(\mathfrak{gl}(2(u_1-u_3)|2(q_1-q_3)),\big(2^{u_1-u_3|q_1-q_3}\big)\big)\to\mathcal{W}^k(\mathfrak{gl}(M|N),f),\\
\hphantom{\iota_4\colon\ }{} \quad
W^{(r)}_{i,j}\mapsto W^{(r)}_{i,j},
\end{gather*}
where $\widetilde{k}=k+M-N-2(u_1-q_1)+(u_3-q_3)$.
\end{enumerate}
\end{Corollary}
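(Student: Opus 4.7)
The plan is to verify each embedding by a direct OPE computation using the explicit generators $W^{(r)}_{a,b}$ from Theorem~\ref{Generators} together with the Miura embedding $\mu$ into $\bigotimes_{1\leq s\leq l}V^{\kappa_s}(\mathfrak{gl}(u_s|q_s))$. Since the rectangular OPEs have already been computed in Section~4 of \cite{U4}, my strategy is to reduce parts (1) and (3) to those calculations, and to handle (2) directly.

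For part (2), the generators $W^{(1)}_{i,j}$ are sums of $e_{i',j'}[-1]$ over diagonal pairs with $\col(i')=\col(j')$, so the OPE $W^{(1)}_{i,j}(z)\,W^{(1)}_{x,y}(w)$ is computed immediately from \eqref{OPE1} by a single Wick-style pairing in each Miura factor. The single contraction reproduces the Lie superbracket of $\mathfrak{gl}(u_1-u_2|q_1-q_2)$ on the $W^{(1)}$ level. The double contraction sums a scalar over the columns where both row indices $i,x$ (respectively $j,y$) occur, and this sum must equal $\kappa_1(e_{i,j},e_{x,y})=(-1)^{p(i)}\alpha_1\delta_{i,y}\delta_{j,x}+(-1)^{p(i)+p(x)}\delta_{i,j}\delta_{x,y}$. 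The essential identification is $\alpha_1=k+M-N-u_1+q_1$ built into the definition of $\kappa_s$. This produces the vertex algebra homomorphism $V^{\kappa_1}(\mathfrak{gl}(u_1-u_2|q_1-q_2))\to\mathcal{W}^k(\mathfrak{gl}(M|N),f)$; linear independence of the images follows directly from the Miura formulas, so the map is an embedding.

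For part (1), the prescription $\iota_2\colon W^{(r)}_{i,j}\mapsto W^{(r)}_{i,j}$ with $i,j\in I_{u_1|q_1}\subset I_{u_2|q_2}$ makes sense because, in the ambient $\mathcal{W}^k(\mathfrak{gl}(2u_2|2q_2),(2^{u_2|q_2}))$, these generators involve only matrix units within the top-left $u_1|q_1$ sub-blocks. Their OPEs, computed via \cite{U4}, match those defining the smaller rectangular $W$-superalgebra at level $\widehat{k}$: the shift $\widehat{k}-k=u_2-q_2-u_1+q_1$ comes from the fact that the coefficients $\gamma_{\col(i)}$ and $\alpha_{\col(i)}$ entering $W^{(2)}_{a,b}$ differ by exactly this amount between the two ambient settings. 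Part (3) proceeds by the same template: the hypothesis that the first two columns have the same shape (reading $q_1=q_2>q_3$ in the statement) forces $W^{(r)}_{i,j}$ for row indices in the top rectangular block to depend only on those columns, so the argument of (1) applies and yields $\iota_4$ with level shift $\widetilde{k}-k=M-N-2(u_1-q_1)+(u_3-q_3)$ accounting for background contributions from columns of index $\geq 3$.

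The main obstacle is the level-matching in part (2). The double contraction telescopes over all columns $c$ containing both row indices $i,x$, and each column contributes a sign $(-1)^{p(i')}$ coming from the inner product $\kappa_{c}$; verifying that the total agrees exactly with $\alpha_1$ and that the $k$-independent diagonal correction $(-1)^{p(i)+p(x)}\delta_{i,j}\delta_{x,y}$ survives intact requires careful parity bookkeeping. Once (2) is settled, parts (1) and (3) reduce to comparison with the rectangular OPE tables recorded in \cite{U4}.
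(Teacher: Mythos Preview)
Your approach is essentially the paper's: reduce (1) and (3) to the rectangular OPE tables of \cite{U4}, and verify (2) by computing the OPE of the $W^{(1)}$'s from their definition. The paper's proof is correspondingly terse: (1) is a direct citation of Section~4 in \cite{U4}; (2) invokes that same section together with the definition of $W^{(1)}_{i,j}$; (3) follows because the generators $W^{(r)}_{i,j}$ on both sides have literally the same Miura expression.

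One remark on your ``main obstacle'' in (2). For row indices $a,b$ that live only in column $1$ (which is exactly the range relevant to $\mathfrak{gl}(u_1-u_2\,|\,q_1-q_2)$), the sum defining $W^{(1)}_{a,b}$ collapses to a single term $e_{i,j}[-1]$ in the first Miura factor $V^{\kappa_1}(\mathfrak{gl}(u_1|q_1))$. There is therefore no telescoping over columns: the double contraction is immediately $\kappa_1(e_{i,j},e_{x,y})$, and the level $\alpha_1=k+M-N-u_1+q_1$ appears without any summation. So part (2) is actually the easiest case, not the hardest; the parity bookkeeping you worry about is absorbed into the single application of \eqref{OPE1}. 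With that simplification your argument goes through and matches the paper.
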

\begin{proof}
(1) follows directly from \cite[Section 4]{U4}. (2) follows from \cite[Section~4]{U4} and the definition of \smash{$W^{(1)}_{i,j}$}. Since the form of \smash{$W^{(r)}_{i,j}\in\mathcal{W}^{\widetilde{k}}\big(\mathfrak{gl}(2(u_1-u_3)|2(q_1-q_3)),\big(2^{u_1-u_3|q_1-q_3}\big)\big)$} and \smash{${W^{(r)}_{i,j}\in\mathcal{W}^k(\mathfrak{gl}(M|N),f)}$} are same, we obtain (3).
\end{proof}

\section[Affine super Yangians and W-superalgebras of type A]{Affine super Yangians and $\boldsymbol{W}$-superalgebras of type $\boldsymbol{A}$}

Let us recall the definition of the universal enveloping algebras of vertex superalgebras.
For any vertex superalgebra $V$, let $L(V)$ be the Bouchard's Lie algebra, that is,
\begin{align*}
 L(V)=V{\otimes}\mathbb{C}\big[t,t^{-1}\big]/\text{Im}\biggl(\partial\otimes\id +\id\otimes\frac{{\rm d}}{{\rm d} t}\biggr)\label{844},
\end{align*}
where the commutation relation is given by
\begin{align*}
 \big[ut^a,vt^b\big]=\sum\limits_{r\geq 0}\begin{pmatrix} a\\r\end{pmatrix}\big(u_{(r)}v\big)t^{a+b-r}
\end{align*}
for all $u,v\in V$ and $a,b\in \mathbb{Z}$.
\begin{Definition}[Frenkel--Zhu~\cite{FZ}, Matsuo--Nagatomo--Tsuchiya~\cite{MNT}]
We set $\mathcal{U}(V)$ as the quotient algebra of the standard degreewise completion of the universal enveloping algebra of $L(V)$ by the completion of the two-sided ideal generated by
\begin{gather*}
\big(u_{(a)}v\big)t^b-\sum\limits_{i\geq 0}
\begin{pmatrix}
 a\\i
\end{pmatrix}
(-1)^i\big(ut^{a-i}vt^{b+i}-{(-1)}^{p(u)p(v)}(-1)^avt^{a+b-i}ut^{i}\big),\\
|0\rangle t^{-1}-1,
\end{gather*}
where $|0\rangle$ is the identity vector of $V$.
We call $\mathcal{U}(V)$ the universal enveloping algebra of $V$.
\end{Definition}
By the definition of the universal affine vertex algebra $V^{\kappa}(\mathfrak{g})$ associated with a finite dimensional reductive Lie superalgebra $\mathfrak{g}$ and the inner product $\kappa$ on $\mathfrak{g}$, $\mathcal{U}(V^\kappa(\mathfrak{g}))$ is the standard degreewise completion of the universal enveloping algebra of the affinization of $\mathfrak{g}$.

Let us set $\kappa_s$ as an inner product on $\mathfrak{gl}(u_s|q_s)$ given by
\begin{equation*}
\kappa_s(e_{i,j},e_{x,y})=(-1)^{p(i)}\alpha_s\delta_{i,y}\delta_{j,x}+(-1)^{p(i)+p(x)}\delta_{i,j}\delta_{x,y}.
\end{equation*}
By \cite[Theorem~5.2]{Genra} and \cite[Theorem~14]{Nak}, there exists an embedding
\begin{equation*}
\mu\colon\ \mathcal{W}^k(\mathfrak{gl}(M|N),f)\to\bigotimes_{1\leq s\leq l}V^{\kappa_s}(\mathfrak{gl}(u_s|q_s)).
\end{equation*}
This embedding is called the Miura map.
Then, induced by the Miura map $\mu$, we obtain the embedding
\begin{equation*}
\widetilde{\mu}\colon\ \mathcal{U}\big(\mathcal{W}^{k}(\mathfrak{gl}(M|N),f)\big)\to {\widehat{\bigotimes}}_{1\leq a\leq l}U\big(\widehat{\mathfrak{gl}}(u_a|q_a)\big),
\end{equation*}
where \smash{${\widehat{\bigotimes}}_{1\leq a\leq l}U\big(\widehat{\mathfrak{gl}}(u_a|q_a)\big)$} is the standard degreewise completion of \smash{$\bigotimes_{1\leq a\leq l}U\big(\widehat{\mathfrak{gl}}(u_a|q_a)\big)$}.

For $1\leq a\leq l$, we define $\ve_a=\hbar(k+M-N-u_a+q_a)$. In the case that $u_s-u_{s+1},q_s-q_{s+1}\geq 2$ and $u_s-q_s+q_s-q_{s+1}\geq5$, let us define the homomorphism
\begin{equation*}
\Delta^{s}\colon\ Y_{\hbar,\ve_s}\big(\widehat{\mathfrak{sl}}(u_s-u_{s+1}|q_s-q_{s+1})\big)\to\bigotimes_{1\leq a\leq s}Y_{\hbar,\ve_a}\big(\widehat{\mathfrak{sl}}(u_a|q_a)\big)
\end{equation*}
defined by
\begin{equation*}
\Delta^{s}=\Biggl(\prod_{a=1}^{s-1}  \big(\big(\big(\Psi_2^{q_{a+1}|u_{a+1},q_a|u_a}\otimes1\big)\circ\Delta\big)\otimes\id^{\otimes (s-a-1)}\big)\circ\Psi_1^{u_s-u_{s+1}|q_s-q_{s+1},u_s|q_s}\Biggr)\otimes\id^{\otimes (l-s)}.
\end{equation*}
\begin{Theorem}[{\cite[Theorem~11.1]{U13}}]\label{Main3}
There exists an algebra homomorphism
\begin{equation*}
\Phi_s\colon\ Y_{\hbar,\ve_s}\big(\widehat{\mathfrak{sl}}(u_s-u_{s+1}|q_s-q_{s+1})\big)\to \mathcal{U}\big(\mathcal{W}^{k}(\mathfrak{gl}(N),f)\big)
\end{equation*}
determined by
\begin{equation}
\bigotimes_{1\leq a\leq s}\ev_{\hbar,\ve_a}^{u_a|q_a,-x_a\hbar}\circ\Delta^{s}=\widetilde{\mu}\circ\Phi_s,\label{eqqq}
\end{equation}
where \smash{$x_a=\gamma_a+q_a-q_s-\frac{u_a-u_s}{2}$}.
\end{Theorem}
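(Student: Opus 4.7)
The plan is to show that the image of $\bigotimes_{1\leq a\leq s}\ev_{\hbar,\ve_a}^{u_a|q_a,-x_a\hbar}\circ\Delta^{s}$ lies inside $\widetilde{\mu}\bigl(\mathcal{U}(\mathcal{W}^{k}(\mathfrak{gl}(N),f))\bigr)$; since the Miura map $\widetilde{\mu}$ is an embedding, this automatically produces a uniquely determined map $\Phi_s$ satisfying \eqref{eqqq}, and the homomorphism property is inherited from the composition on the left-hand side. Because $Y_{\hbar,\ve_s}(\widehat{\mathfrak{sl}}(u_s-u_{s+1}|q_s-q_{s+1}))$ is generated by $\{X^\pm_{i,0}\}$ together with $\widetilde{H}_{1,1}$, it suffices to check this containment on these generators.

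For the generators $X^\pm_{i,0}$ I would first unwind $\Delta^s$ using the explicit action of $\Delta$, $\Psi_1^{u_s-u_{s+1}|q_s-q_{s+1},u_s|q_s}$, and $\Psi_2^{u_{a+1}|q_{a+1},u_a|q_a}$ on Chevalley generators; each step is primitive, so $\Delta^s(X^\pm_{i,0})$ is a sum of terms of the form $1^{\otimes\bullet}\otimes X^\pm_{j,0}\otimes 1^{\otimes\bullet}$. Applying $\bigotimes_a\ev_{\hbar,\ve_a}^{u_a|q_a,-x_a\hbar}$ then produces a sum of matrix units $E_{x,y}t^r$ placed in the appropriate tensor factors, and a direct identification with the formula for $W^{(1)}_{a,b}$ in Theorem~\ref{Generators} shows the result is $\widetilde{\mu}$ of a combination of $W^{(1)}$-elements. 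This step is essentially combinatorial bookkeeping of matrix units and does not invoke Theorem~\ref{Commutativity}.

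The substantive part is the analogous calculation for $\widetilde{H}_{1,1}$ (equivalently, for $X^+_{1,1}$). The coproduct $\Delta(X^+_{1,1})$ contributes the cross-term $B_1$, while each intermediate application of $\Psi_1$ or $\Psi_2$ introduces the correction terms $P_i-P_{i+1}+Q_i-Q_{i+1}$ and $R_i-R_{i+1}+S_i-S_{i+1}$ from Section~3. Under $\bigotimes_a\ev_{\hbar,\ve_a}^{u_a|q_a,-x_a\hbar}$, each of these symbols becomes an explicit infinite bilinear in matrix units distributed across the factors $U(\widehat{\mathfrak{gl}}(u_a|q_a))$. The claim I would verify is that the total reorganises as $\widetilde{\mu}$ applied to an explicit combination of $W^{(2)}_{a,b}$-generators from Theorem~\ref{Generators} together with normal-ordered products of $W^{(1)}_{a,b}$; the shift $-x_a\hbar=-(\gamma_a+q_a-q_s-\tfrac{u_a-u_s}{2})\hbar$ has been chosen precisely so that the linear coefficient $(a-\tfrac{\hat{i}}{2}\hbar)$ produced by $\ev_{\hbar,\ve_a}^{u_a|q_a,-x_a\hbar}$ on $X^+_{i,1}$ reproduces the coefficient $-\gamma_{\col(i)}$ appearing in the first sum of $W^{(2)}_{a,b}$, and the levels $\ve_a$ are chosen so that the base-level shifts tracked by $\Psi_2$ match up consistently.

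The role of Theorem~\ref{Commutativity} is essential here: the iterated formula for $\Delta^s$ weaves together one copy of $\Psi_1$ with $s-1$ copies of $\Psi_2$, and commutativity of their images is precisely what lets the resulting cross-terms in distinct tensor factors be rearranged into the block structure of the $W$-algebra generators. I expect the main obstacle to be the cancellation bookkeeping for the $\widetilde{H}_{1,1}$-calculation: the images of $P_i,Q_i,R_j,S_j$ under the evaluation maps produce many infinite series with overlapping index ranges, and one must reorganise them into the quadratic part of $W^{(2)}_{a,b}$ via a chain of rewrites analogous to \eqref{PA1}--\eqref{AS} used in Section~3.3 for \eqref{gather3}. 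Once this matching is checked on $X^\pm_{i,0}$ and $\widetilde{H}_{1,1}$, the existence and uniqueness of $\Phi_s$ follow from the injectivity of $\widetilde{\mu}$, completing the proof.
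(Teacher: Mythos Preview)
The paper does not prove Theorem~\ref{Main3}; it is quoted from \cite{U13} (as ``Theorem~11.1 in \cite{U13}''), and the present paper only records the explicit images of generators that follow from \eqref{eqqq}. So there is no in-paper proof to compare against, and your outline should be measured against what such a proof must logically contain.

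Your overall strategy is the right one and is essentially what one would do: check on generators that the left-hand side of \eqref{eqqq} lands in $\widetilde{\mu}\bigl(\mathcal{U}(\mathcal{W}^{k}(\mathfrak{gl}(N),f))\bigr)$, and then invoke injectivity of the Miura map. However, there is a genuine conceptual error in your third paragraph. You claim that Theorem~\ref{Commutativity} is ``essential'' for the proof, on the grounds that $\Delta^s$ ``weaves together one copy of $\Psi_1$ with $s-1$ copies of $\Psi_2$'' and that commutativity of their images is needed to rearrange cross-terms in distinct tensor factors. This is wrong on two counts. First, Theorem~\ref{Main3} is a result of \cite{U13}, proved \emph{before} Theorem~\ref{Commutativity}, which is the new contribution of the present paper; the logical dependence runs the other way. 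Second, and more concretely, look at the definition of $\Delta^s$: the map $\Psi_1$ is applied once at the outset, and thereafter one iterates $(\Psi_2\otimes 1)\circ\Delta$; at no stage are $\Psi_1$ and $\Psi_2$ applied with the \emph{same} target so that commutativity of their images in a common codomain would be relevant. The ``cross-terms in distinct tensor factors'' you mention already commute simply because they live in different tensor slots; Theorem~\ref{Commutativity} plays no role there. So this part of your argument should be removed, and the $\widetilde{H}_{1,1}$ computation carried out as a direct matching with the $W^{(2)}$-formula in Theorem~\ref{Generators}, without appealing to Theorem~\ref{Commutativity}.
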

By \eqref{eqqq}, we find that
\begin{gather*}
\Phi_s\big(X^+_{i,0}\big)=\begin{cases}
W^{(1)}_{u_1-u_s+i,u_1-u_s+i+1}&\text{if }1\leq i\leq u_s-u_{s+1}-1,\\
W^{(1)}_{u_1-u_s+i,-q_1+q_s-1}&\text{if }i=u_s-u_{s+1},\\
W^{(1)}_{u_1-u_s+i,-q_1+q_s+i-1}&\text{if }-q_s+q_{s+1}+1\leq i\leq-1,\\
W^{(1)}_{-q_1+q_{s+1},u_1-u_s+1}t&\text{if }i=-q_s+q_{s+1},
\end{cases}\\
\Phi_s\big(X^-_{i,0}\big)=\begin{cases}
W^{(1)}_{u_1-u_s+i+1,u_1-u_s+i}&\text{if }1\leq i\leq u_s-u_{s+1}-1,\\
W^{(1)}_{-q_1+q_s-1,u_1-u_s+i}&\text{if }i=u_s-u_{s+1},\\
W^{(1)}_{-q_1+q_s+i-1,u_1-u_s+i}&\text{if }-q_s+q_{s+1}+1\leq i\leq-1,\\
W^{(1)}_{u_1-u_s+1,-q_1+q_{s+1}}t^{-1}&\text{if }i=-q_s+q_{s+1},
\end{cases}
\end{gather*}
and
\begin{gather*}
\Phi_l\big(\widetilde{H}_{i_+,1}\big)=\begin{cases}
-\hbar\big(W^{(2)}_{u_1-u_l+i_+,u_1-u_l+i_+}t-W^{(2)}_{u_1-u_l+i_++1,u_1-u_l+i_++1}t\big)\\
\displaystyle \quad{}-\frac{i_+}{2}\hbar\big(W^{(1)}_{u_1-u_l+i_+,u_1-u_l+i_+}-W^{(1)}_{u_1-u_l+i_++1,i_++1}\big)\\
\quad{}+U_{i_+}-U_{i_++1}&\text{if }1\leq i_+\leq u_l-1,\\
-\hbar\big(W^{(2)}_{u_1,u_1}+W^{(2)}_{-q_1+q_l-1,-q_1+q_l-1}\big)\\
\displaystyle \quad{}-\frac{u_l}{2}\hbar\big(W^{(1)}_{u_1,u_1}+W^{(1)}_{-q_1+q_l-1,-q_1+q_l-1}\big)\\
\quad{}+U_{u_l}-U_{-1}&\text{if } i_+=u_l,
\end{cases}\\
\Phi_l\big(\widetilde{H}_{i_-,1}\big)=\begin{cases}
\makebox[0pt][l]{$\hbar\big(W^{(2)}_{-q_1+q_l+i_-,-q_1+q_l+i_-}t-W^{(2)}_{-q_1+q_l+i_--1,-q_1+q_l+i_--1}t\big)$}&\\
\displaystyle \makebox[0pt][l]{$\displaystyle\quad{}+\frac{u_l+i_-}{2}\hbar\big(W^{(1)}_{-q_1+q_l+i_-,-q_1+q_l+i_-} -W^{(1)}_{-q_1+q_l+i_--1,-q_1+q_l+i_--1}\big)$}&\\
\quad{}+U_{i_-}-U_{i_--1}& \text{if }-q_l+1\leq i_-\leq -1,\\
\hbar\big(W^{(2)}_{-q_l,-q_l}t+W^{(2)}_{u_1-u_l+1,u_1-u_l+1}t\big)-\ve_l W^{(1)}_{-q_l,-q_l}&\\
\displaystyle \quad{}+\sum\limits_{u=1}^l \big(-\hbar\alpha_ux_u+\ve_u\alpha_u+\frac{\hbar}{2}(q_u-q_l)\alpha_u\big)&\\
\displaystyle \quad{}-\hbar\sum\limits_{u=1}^{u_1-u_l}  W^{(1)}_{u,u}-\hbar\sum\limits_{u=1}^{q_1-q_l}  W^{(1)}_{-u,-u}+U_{-q_l}-U_{1}& \text{if }i_-=-q_l,
\end{cases}
\end{gather*}
where
\begin{gather*}
U_{i_+}=-\frac{\hbar}{2}\big(W^{(1)}_{u_1-u_l+i_+,u_1-u_l+i_+}\big)^2+\hbar\sum\limits_{s\geq0} \sum\limits_{u=1}^{i_+}W^{(1)}_{u_1-u_l+i_+,u_1-u_l+u}t^{-s}W^{(1)}_{u_1-u_l+u,u_1-u_l+i_+}t^s\\
\hphantom{U_{i_+}=}{}+\hbar\sum\limits_{s\geq0} \sum\limits_{u=i_++1}^{u_l}W^{(1)}_{u_1-u_l+i_+,u_1-u_l+u}t^{-s-1}W^{(1)}_{u_1-u_l+u,u_1-u_l+i_+}t^{s+1}\\
\hphantom{U_{i_+}=}{}-\hbar\sum\limits_{s\geq0} \sum\limits_{u=-q_l}^{-1}W^{(1)}_{u_1-u_l+i_+,-q_1+q_l+u}t^{-s-1}W^{(1)}_{-q_1+q_l+u,u_1-u_l+i_+}t^{s+1},\\
U_{i_-}=-\frac{\hbar}{2}\big(W^{(1)}_{-q_1+q_l+i_-,-q_1+q_l+i_-}\big)^2-\hbar\sum\limits_{s\geq0} \sum\limits_{u=1}^{u_l}W^{(1)}_{-q_1+q_l+i_-,u_1-u_l+u}t^{-s}W^{(1)}_{u_1-u_l+u,-q_1+q_l+i_-}t^s\\
\hphantom{U_{i_-}=}{}-\hbar\sum\limits_{s\geq0} \sum\limits_{u=i_-}^{-1}W^{(1)}_{-q_1+q_l+i_-,-q_1+q_l+u}t^{-s}W^{(1)}_{-q_1+q_l+u,-q_1+q_l+i_-}t^{s}\\
\hphantom{U_{i_-}=}{}+\hbar\sum\limits_{s\geq0} \sum\limits_{u=-q_l}^{i_--1}W^{(1)}_{-q_1+q_l+i_-,-q_1+q_l+u}t^{-s-1}W^{(1)}_{-q_1+q_l+u,-q_1+q_l+i_-}t^{s+1}.
\end{gather*}
In the rectangular case, $\Phi_l$ coincides with the homomorphism $\Phi$ in \cite[Theorem~5.1]{U4}. Hereafter, we denote the homomorphism $\Phi_l$ in the rectangular case by $\Phi^{u_1|q_1}$.
\begin{Theorem}\qquad\label{thm1}
\begin{enumerate}
\item[$(1)$] Assume that $m_1,m_2,n_1,n_2\geq0$, $m_1+n_1,m_2+n_2\geq 5$ and
\begin{equation*}
\ve_l=\frac{k+(m_1+m_2)-(n_1+n_2)}{\hbar}.
\end{equation*}
Then, we obtain the relation:
\begin{equation*}
\iota_2\circ\Phi^{m_1|n_1}=\Phi^{m_1+m_2|n_1+n_2}\circ\Psi_1^{m_1|n_1,m_1+m_2|n_1+n_2}.
\end{equation*}

\item[$(2)$] We also suppose the condition that $\ve_l\neq0$. Then, we have a homomorphism
\begin{align*}
&\Phi^{m_1+m_2|n_1+n_2}\circ\Psi_2^{m_2|n_2,m_1+m_2|n_1+n_2}\colon\\
& \qquad Y_{\hbar,\ve+(m_1-n_1)\hbar}\big(\widehat{\mathfrak{sl}}(m_2|n_2)\big)
\to C(\mathcal{U}(W_1),\mathcal{U}(W_2)),
\end{align*}
where
\begin{align*}
&W_1=W^k\big(\mathfrak{gl}(2(m_1+m_2),2(n_1+n_2))|\big(2^{m_1+m_2|n_1+n_2}\big)\big),\\
&W_2=W^{k+m_2-n_2}\big(\mathfrak{gl}(2(m_1+m_2)|2(n_1+n_2)),\big(2^{m_1|n_1}\big)\big).
\end{align*}
\end{enumerate}
\end{Theorem}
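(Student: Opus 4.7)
The plan is to exploit the fact that both sides of the claimed identity are algebra homomorphisms out of $Y_{\hbar,\ve}(\widehat{\mathfrak{sl}}(m_1|n_1))$, so it suffices to check the equality on a generating set. As noted after Theorem~\ref{Commutativity}, $Y_{\hbar,\ve}(\widehat{\mathfrak{sl}}(m_1|n_1))$ is generated by $\{X^\pm_{i,0}\}_{i\in I_{m_1|n_1}}$ together with $X^+_{1,1}$. For the degree-zero generators, we would substitute the explicit matrix-unit formulas for $\Psi_1^{m_1|n_1,m_1+m_2|n_1+n_2}(X^\pm_{i,0})$ from Section~3 into the formulas for $\Phi^{m_1+m_2|n_1+n_2}(X^\pm_{i,0})$ displayed after Theorem~\ref{Main3}; both compositions produce the same $W^{(1)}_{a,b}$. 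On the other side, $\iota_2\circ \Phi^{m_1|n_1}(X^\pm_{i,0})$ yields the same element, since $\iota_2$ acts as the identity on the $W^{(r)}_{i,j}$ by Corollary~\ref{Cor}~(1). This reduces part (1) to the single identity at $X^+_{1,1}$.

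\textbf{The crux for $X^+_{1,1}$.} The essential content of part (1) is the identity $\iota_2\circ \Phi^{m_1|n_1}(X^+_{1,1}) = \Phi^{m_1+m_2|n_1+n_2}(X^+_{1,1}) - \Phi^{m_1+m_2|n_1+n_2}(P^+_1) + \Phi^{m_1+m_2|n_1+n_2}(Q^+_1)$, using $\Psi_1^{m_1|n_1,m_1+m_2|n_1+n_2}(X^+_{1,1}) = X^+_{1,1} - P^+_1 + Q^+_1$. Each term is evaluated through the intertwining identity (\ref{eqqq}) of Theorem~\ref{Main3}, namely $\bigotimes_a \ev_{\hbar,\ve_a}^{u_a|q_a,-x_a\hbar}\circ \Delta^l = \widetilde{\mu}\circ \Phi^{m_1+m_2|n_1+n_2}$. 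The displayed formula for $\Phi^{m_1+m_2|n_1+n_2}(X^+_{1,1})$ is an infinite sum whose column index $z$ naturally decomposes as $\{1,\dots,m_1\}\sqcup\{m_1+1,\dots,m_1+m_2\}$ on the positive side and $\{-n_1,\dots,-1\}\sqcup\{-n_1-n_2,\dots,-n_1-1\}$ on the negative side. The key claim is that $\Phi^{m_1+m_2|n_1+n_2}(P^+_1)$ and $\Phi^{m_1+m_2|n_1+n_2}(Q^+_1)$ are exactly the contributions indexed by the outer blocks $\{m_1+1,\dots,m_1+m_2\}$ and $\{-n_1-n_2,\dots,-n_1-1\}$ respectively, so the remainder after cancellation reproduces $\iota_2\circ \Phi^{m_1|n_1}(X^+_{1,1})$.

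\textbf{Part (2).} Granting part (1), the plan is to apply the algebra homomorphism $\Phi^{m_1+m_2|n_1+n_2}$ to the conclusion of Theorem~\ref{Commutativity}. This shows that the image of $\Phi^{m_1+m_2|n_1+n_2}\circ \Psi_2^{m_2|n_2,m_1+m_2|n_1+n_2}$ commutes inside $\mathcal{U}(W_1)$ with the image of $\Phi^{m_1+m_2|n_1+n_2}\circ \Psi_1^{m_1|n_1,m_1+m_2|n_1+n_2}$. By part (1), the latter equals $\iota_2(\mathrm{Im}(\Phi^{m_1|n_1}))$. Under $\ve_l\neq 0$, the surjectivity/density statement coming from Theorem~5.1 of \cite{U4} (the $W$-algebra analogue of Theorem~\ref{thm:main}~(2)) together with continuity of $\iota_2$ for the degreewise completion topologies shows that $\iota_2(\mathrm{Im}(\Phi^{m_1|n_1}))$ is dense in $\iota_2(\mathcal{U}(W_2))\subset\mathcal{U}(W_1)$. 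Since the bracket on $\mathcal{U}(W_1)$ is continuous in this topology, commuting with a dense subset of $\iota_2(\mathcal{U}(W_2))$ forces commuting with all of $\mathcal{U}(W_2)$, yielding the desired containment in $C(\mathcal{U}(W_1),\mathcal{U}(W_2))$.

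\textbf{Main obstacle.} The principal technical hurdle is the $X^+_{1,1}$ case of part (1). Matching the infinite sums $P^+_1$ and $Q^+_1$ against the Miura-theoretic expression for $\Phi^{m_1+m_2|n_1+n_2}(X^+_{1,1})$ requires careful tracking, through the identity $\bigotimes_a\ev\circ\Delta^l = \widetilde{\mu}\circ\Phi^{m_1+m_2|n_1+n_2}$, of the quadratic coproduct term $B_1$ in $\Delta$, of the shift parameters $-x_a\hbar$ appearing in the evaluation maps, and of the parity signs coming from the super structure; these data must all align so that the outer-column contributions exactly cancel and only the $(m_1|n_1)$-block survives. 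A secondary but nontrivial issue in part (2) is verifying that the density inherited from \cite{U4} transfers through $\iota_2$ into the correct completion inside $\mathcal{U}(W_1)$, which boils down to compatibility of the degrees used to define $\mathcal{U}(W_1)$ and $\mathcal{U}(W_2)$.
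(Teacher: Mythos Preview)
Your proposal is correct and follows essentially the same approach as the paper: part~(1) is checked on generators (the paper's proof just says ``follows from the definition of $\Phi^{m|n}$ and $\Psi_1$''), and part~(2) combines part~(1), Theorem~\ref{Commutativity}, Corollary~\ref{Cor}~(1), and the surjectivity from Theorem~5.1 of \cite{U4} exactly as you outline.

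One remark: you overstate the ``main obstacle'' at $X^+_{1,1}$. There is no need to track anything through the intertwining identity~(\ref{eqqq}) with the Miura map, the coproduct term $B_1$, or the shift parameters $-x_a\hbar$. The paper has already extracted from that identity an explicit closed formula for $\Phi_l(X^+_{1,1})$ in terms of $W^{(1)}$ and $W^{(2)}$ (the display immediately following Theorem~\ref{Main3}). Since $\Phi^{m|n}$ restricted to the copy of $U(\widehat{\mathfrak{sl}}(m|n))$ inside the Yangian sends $E_{i,j}t^s$ to $W^{(1)}_{i,j}t^s$, the image $\Phi^{m_1+m_2|n_1+n_2}(P^+_1)$ (resp.\ $\Phi^{m_1+m_2|n_1+n_2}(Q^+_1)$) is literally the portion of that display with $z\in\{m_1+1,\dots,m_1+m_2\}$ (resp.\ $z\in\{-n_1-n_2,\dots,-n_1-1\}$), and after subtracting these the remaining terms are, via $\iota_2(W^{(r)}_{i,j})=W^{(r)}_{i,j}$, exactly $\iota_2\circ\Phi^{m_1|n_1}(X^+_{1,1})$. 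So the $X^+_{1,1}$ check is a one-line comparison of two displayed formulas, which is why the paper dismisses part~(1) as following ``from the definition''.
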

\begin{proof}
(1) follows from the definition of $\Phi^{m|n}$ and $\Psi_1^{m_1|n_1,m_1+m_2|n_1+n_2}$. By (1), Corollary~\ref{Cor}\,(1) and~\cite[Theorem~5.1]{U4}, the completion of the image of $\iota_2\circ\Phi^{m_1|n_1}$ coincides with the universal enveloping algebra \smash{$\mathcal{U}\big(W^{k+m_2-n_2}\big(\mathfrak{gl}(2(m_1+m_2)|2(n_1+n_2)),\big(2^{m_1|n_1}\big)\big)\big)$}. Then, by Theorem~\ref{Commutativity}, we obtain (2).
\end{proof}

\begin{Theorem}\label{cent}\qquad
\begin{enumerate}
\item[$(1)$] Assume that $u_1-u_2,q_1-q_2>0$, $u_1-u_2+q_1-q_2\geq 5$ and $\ve_1\neq 0$. Then, we find that
\begin{gather*}
\Phi_s\colon\ Y_{\hbar,\ve_s}\big(\widehat{\mathfrak{sl}}(u_s-u_{s+1}|q_s-q_{s+1})\big)\\
\hphantom{\Phi_s\colon\ }{}\quad\to C\big(\mathcal{U}\big(\mathcal{W}^k(\mathfrak{gl}(M|N),f)\big),\mathcal{U}\big(\widehat{\mathfrak{gl}}(u_1-u_2|q_1-q_2)\big)\big)
\end{gather*}
for $s\neq1$.

\item[$(2)$] Assume that $u_1=u_2>u_3,q_1=q_2>q_3$, $u_1-u_3+q_1-q_3\geq 5$ and $\ve_2\neq 0$. Then, for $s\neq1$, we obtain
\begin{equation*}
\Phi_s\colon\ Y_{\hbar,\ve_s}\big(\widehat{\mathfrak{sl}}(u_s-u_{s+1}|q_s-q_{s+1})\big)\to C(\mathcal{U}(W_3),\mathcal{U}(W_4)),
\end{equation*}
where
\begin{align*}
&\widetilde{k}=k+M-N-2(u_1-q_1)+(u_3-q_3),\\
&W_3=\mathcal{W}^k(\mathfrak{gl}(M|N),f)),\\
&W_4=\mathcal{W}^{\widetilde{k}}\big(\mathfrak{gl}(2(u_1-u_3)|2(q_1-q_3)),\big(2^{u_1-u_3|q_1-q_3}\big)\big).
\end{align*}
where $\widehat{k}=k+M-N-2(u_1-q_1)+u_3-q_3$.
\end{enumerate}
\end{Theorem}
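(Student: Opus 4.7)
The plan is to adapt the proof of Theorem~\ref{thm1}(2) essentially verbatim, replacing the embedding $\iota_2$ with $\iota_3$ for part (1) and with $\iota_4$ for part (2), and invoking the appropriate density statement in place of Theorem 5.1 in \cite{U4}.

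For part (1), I would first verify that $\Phi_1$ factors as $\iota_3\circ\ev^{u_1-u_2|q_1-q_2,-x_1\hbar}_{\hbar,\ve_1}$. Since $\Delta^1=\Psi_1^{u_1-u_2|q_1-q_2,u_1|q_1}\otimes\id^{\otimes(l-1)}$, the explicit formulas for $\Phi_1(X^\pm_{i,0})$ and $\Phi_1(X^+_{1,1})$ recorded after Theorem~\ref{Main3}, combined with the rule $E_{i,j}[-s]\mapsto W^{(1)}_{i,j}[-s]$ defining $\iota_3$ in Corollary~\ref{Cor}(2), give the identification on generators. Because $\ve_1\neq 0$ by hypothesis, Theorem~\ref{thm:main}(2) implies that the image of $\ev^{u_1-u_2|q_1-q_2,-x_1\hbar}_{\hbar,\ve_1}$ is dense in $\mathcal{U}(\widehat{\mathfrak{gl}}(u_1-u_2|q_1-q_2))$, so the closure of $\Phi_1(Y_{\hbar,\ve_1}(\widehat{\mathfrak{sl}}(u_1-u_2|q_1-q_2)))$ inside $\mathcal{U}(\mathcal{W}^k(\mathfrak{gl}(M|N),f))$ coincides with $\iota_3(\mathfrak{U}(\widehat{\mathfrak{gl}}(u_1-u_2|q_1-q_2)))$.

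Next, for $s\neq 1$, the leftmost tensor component of $\Delta^s$ arises from a composition beginning with $\Psi_2^{q_2|u_2,q_1|u_1}$, whereas the same component of $\Delta^1$ comes from $\Psi_1^{u_1-u_2|q_1-q_2,u_1|q_1}$. By Theorem~\ref{Commutativity} these two images commute inside $Y_{\hbar,\ve_1}(\widehat{\mathfrak{sl}}(u_1|q_1))$; propagating this through the evaluation homomorphism and the Miura embedding $\widetilde{\mu}$, both of which are algebra homomorphisms, yields the commutation of the images of $\Phi_s$ and $\Phi_1$ in $\mathcal{U}(\mathcal{W}^k(\mathfrak{gl}(M|N),f))$. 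Taking closures and applying the density above gives the desired inclusion in $C(\mathcal{U}(\mathcal{W}^k(\mathfrak{gl}(M|N),f)),\mathfrak{U}(\widehat{\mathfrak{gl}}(u_1-u_2|q_1-q_2)))$. Part (2) proceeds in the same way: under the assumption $u_1=u_2>u_3$, $q_1=q_2>q_3$, the distinguished role of $\Phi_1$ is played by the homomorphism into the rectangular block indexed by $\{1,2\}$, which now factors through $\iota_4$ of Corollary~\ref{Cor}(3); the density statement needed is the rectangular surjectivity of Theorem 5.1 in \cite{U4} together with Theorem~\ref{thm:main}(2), and the hypothesis $\ve_2\neq 0$ is what makes that surjectivity available.

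The main bookkeeping obstacle is verifying the factorizations $\Phi_1=\iota_3\circ\ev$ in part (1) and the analogous factorization through $\iota_4$ in part (2): this requires matching the explicit formulas for $\Phi_s$ given after Theorem~\ref{Main3} with the image generators of $\iota_3$ and $\iota_4$. For the degree-zero generators this is direct from the definitions, and for $X^+_{1,1}$ it is parallel to the check already carried out in the proof of Theorem~\ref{thm1}(1). Once these factorizations are in place, the commutativity argument via Theorem~\ref{Commutativity} and the closure argument via density are a line-by-line transcription of the proof of Theorem~\ref{thm1}(2).
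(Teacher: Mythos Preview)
Your proposal is correct and follows essentially the same route as the paper, which does not spell out a proof but simply says ``Similarly to Theorem~\ref{thm1}, by Theorem 5.1 in \cite{U4}, Theorem~\ref{thm:main} (2), Theorem~\ref{Main3} and Corollary~\ref{Cor} (2) and (3)''. You have correctly identified those ingredients and the way they are assembled: factor $\Phi_1$ (resp.\ the rectangular $\{1,2\}$-block map) through $\iota_3$ (resp.\ $\iota_4$), invoke the relevant density statement under the hypothesis $\ve_1\neq 0$ (resp.\ $\ve_2\neq 0$), and deduce commutation of $\Phi_s$ with $\Phi_1$ from Theorem~\ref{Commutativity} applied in the first tensor slot, the remaining slots being trivial since $\Delta^1$ acts only on the first factor.
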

\begin{proof}
(1) By Theorem~\ref{Main3}, the image of $\Phi_s$ is contained in the completion of the tensor of \smash{$\ev_{\hbar,\ve_1}^{u_1|q_1,-x_1\hbar}\circ\Psi_2^{u_2|v_2,u_1|v_1}$} and \smash{$\widehat{\bigotimes}_{2\leq a\leq l}U\big(\widehat{\mathfrak{gl}}(u_a|q_a)\big)$}. By Theorem~\ref{thm:main}\,(2) and Corollary~\ref{Cor}\,(2), the~completion of the image of $\Psi_1$ coincides with \smash{$\mathcal{U}(\widehat{\mathfrak{gl}}(u_1-u_2|q_1-q_2))$} and is contained in \smash{$\ev_{\hbar,\ve_1}^{u_1|q_1,-x_1\hbar}\circ\Psi_1^{u_1-u_2|v_1-v_2,u_1|v_1}$}. Thus, the image of $\Psi_s$ is contained in the centralizer algebra.

(2) Similarly to (1), we can prove by using \cite[Theorem~5.1]{U4} and
Corollary~\ref{Cor}\,(3) instead of Theorem~\ref{thm:main}\,(2) and Corollary~\ref{Cor}\,(2).
\end{proof}

\begin{Remark}
In Section~\ref{section2}, we give a definition of the affine super Yangian in the case $m,n\geq2$ and $m+n\geq5$ in order to use the finite presentation given in Theorem~\ref{Prop32}. This is why we assume the condition $m_1+n_1,m_2+n_2\geq5$ in Theorem~\ref{thm1} and $u_1-u_2+q_1-q_2\geq5$ or $u_1-u_3+q_1-q_3\geq5$ in Theorem~\ref{cent}.
\end{Remark}
For a vertex algebra $A$ and its subalgebra $B$, we set the coset vertex algebra $C(A,B)$ as
\begin{align*}
C(A,B)=\big\{x\in A\mid b_{(s)}x=0\text{ for }b\in B,\,s\geq0\big\}.
\end{align*}
Similarly to \cite[Theorem~6.5]{U10} and \cite[Theorem~7,7]{U12}, Theorem~\ref{cent} induces the following.
\begin{Theorem}\qquad
\begin{enumerate}
\item[$(1)$] Assume that $m_1,m_2,n_1,n_2\geq0$, $m_1+n_1,m_2+n_2\geq 5$ and
\begin{equation*}
\ve_l=\frac{k+(m_1+m_2)-(n_1+n_2)}{\hbar}\neq0.
\end{equation*}
Then, we obtain a homomorphism
\begin{align*}
\Phi^{m_1+m_2|n_1+n_2}&\circ\Psi_2^{m_2|n_2,m_1+m_2|n_1+n_2}\colon\ Y_{\hbar,\ve_l+(m_1-n_1)\hbar}\big(\widehat{\mathfrak{sl}}(m_2|n_2)\big)\to\mathcal{U}(C(W_1,w_2)),
\end{align*}
where $w_2=\mathcal{W}^{k+m_2-n_2}\big(\mathfrak{sl}(2(m_1+m_2)|2(n_1+n_2)),\big(2^{m_1+m_2|n_1+n_2}\big)\big)$.
\item[$(2)$] Assume that $u_1-u_2,q_1-q_2>0$,
$u_1-u_2+q_1-q_2\geq 5$ and $\ve_1\neq 0$. Then, we have a~homomorphism
\begin{gather*}
\Phi_s\colon\ Y_{\hbar,\ve_s}\big(\widehat{\mathfrak{sl}}(u_s-u_{s+1}|q_s-q_{s+1})\big)\\
\hphantom{\Phi_s\colon\ }{}\quad
\to\mathcal{U}\big(C\big(\mathcal{W}^k(\mathfrak{gl}(M|N),f\big),V^\kappa_1(\mathfrak{sl}(u_1-u_2|q_1-q_2)))\big)
\end{gather*}
for $s\geq1$.
\item[$(3)$] Assume that $u_1=u_2>u_3$, $q_1=q_2>q_3$, $u_1-u_3+q_1-q_3\geq 5$ and $\ve_2\neq 0$. Then, the homomorphism $\Phi_s$ induces
\begin{equation*}
\Phi_s\colon\ Y_{\hbar,\ve_s}\big(\widehat{\mathfrak{sl}}(u_s-u_{s+1}|q_s-q_{s+1})\big)\to\mathcal{U}(C(W_3,w_4)),
\end{equation*}
where $w_4=\mathcal{W}^{\widetilde{k}}\big(\mathfrak{sl}(2(u_1-u_3)|2(q_1-q_3)),\big(2^{u_1-u_3|q_1-q_3}\big)\big)$.
\end{enumerate}
\end{Theorem}

\section{Extended affine super Yangian}\label{section7}
We extend the definition of the affine super Yangian to the new associative algebra. Let us set
\smash{$\widehat{\mathfrak{sl}}(m_2|n_2)^R$} as a Lie subalgebra of \smash{$\widehat{\mathfrak{sl}}(m_1+m_2|n_1+n_2)=\mathfrak{sl}(m_1+m_2|n_1+n_2)\otimes\mathbb{C}\big[t^{\pm1}\big]\oplus\mathbb{C}c$} generated by \smash{$\big\{E_{i,j}t^s\mid s\in\mathbb{Z},i\in I_{m_1+m_2|n_1+n_2},j\in I_{m_1+m_2|n_1+n_2}\setminus I_{m_1|n_1}\big\}$} and $c$.
\begin{Definition}
Let $m_1,n_1\geq 0$. We define \smash{$Y^{m_1+m_2|n_1+n_2}_{\hbar,\ve}\big(\widehat{\mathfrak{sl}}(m_2|n_2)\big)$} by the associative algebra whose generators are
\[
\big\{H_{i,r},X^\pm_{i,r}\mid0\leq i\leq n-1,\,r\in\mathbb{Z}_{\geq0}\big\}
\]
 and \smash{$\widehat{\mathfrak{sl}}(m_2|n_2)^R$} with the relations \eqref{Eq2.1}--\eqref{Eq2.10} and we identify with $H_{i,0}$ and \smash{$X^\pm_{i,0}$} with
 \[
 \Psi^{m_2|n_2,m_1+m_2|n_1+n_2}_2(H_{i,0}) \qquad \text{and} \qquad \Psi^{m_2|n_2,m_1+m_2|n_1+n_2}_2\big(X^\pm_{i,0}\big)
 \qquad \text{for \smash{$i\in I_{m_2|n_2}$}}.
 \]
\end{Definition}

We set the degree on \smash{$Y^{m_1+m_2|n_1+n_2}_{\hbar,\ve}\big(\widehat{\mathfrak{sl}}(m_2|n_2)\big)$} as
\begin{alignat*}{3}
&{\rm deg}(H_{i,r})=0,&&\qquad {\rm deg}\big(X^\pm_{i,r}\big)=\pm \delta_{i,0},&\\
&{\rm deg}(xt^s)=s,&&\qquad {\rm deg}(c_{m+n})=0\qquad \text{for }xt^s\in\widehat{\mathfrak{sl}}(m_2|n_2)^R.&
\end{alignat*}
Using this degree, we denote the standard degreewise completion of \smash{$Y^{m_1+m_2|n_1+n_2}_{\hbar,\ve}\big(\widehat{\mathfrak{sl}}(m_2|n_2)\big)$} by \smash{$\widetilde{Y}^{m_1+m_2|n_1+n_2}_{\hbar,\ve}\big(\widehat{\mathfrak{sl}}(m_2|n_2)\big)$}.

For $1\leq v_+\leq m_1$, $-n_1\leq v_-\leq-1$, $1\leq i_+,j_+\leq m_2$ and $-n_2\leq i_-,j_-\leq -1$, let us set
\begin{gather*}
a^{v_+,w}_{i_+,j_+}=\delta(j_+<i_+)\hbar\sum\limits_{s\geq0}  E_{v_+,i_++m_1}t^{w-s-1}E_{i_++m_1,j_++m_1}t^{s+1}\\
\hphantom{a^{v_+,w}_{i_+,j_+}=}{}+\delta(j_+>i_+)\hbar\sum\limits_{s\geq0}  E_{v_+,i_++m_1}t^{w-s}E_{i_++m_1,j_++m_1}t^{s},\\
a^{v_-,w}_{i_+,j_+}=\delta(j_+<i_+)\hbar\sum\limits_{s\geq0}  E_{v_-,i_++m_1}t^{w-1-s}E_{i_++m_1,j_++m_1}t^{s+1}\\
\hphantom{a^{v_-,w}_{i_+,j_+}=}{}+\delta(j_+>i_+)\hbar\sum\limits_{s\geq0}  E_{v_-,i_++m_1}t^{w-s}E_{i_++m_1,j_++m_1}t^{s},\\
a^{v_+,w}_{i_+,j_-}=\hbar\sum\limits_{s\geq0}  E_{v_+,i_++m_1}t^{w-s}E_{i_++m_1,-j_--n_1}t^{s},\\
a^{v_-,w}_{i_+,j_-}=\hbar\sum\limits_{s\geq0}  E_{v_-,i_++m_1}t^{w-s}E_{i_++m_1,-j_--n_1}t^{s},\\
a^{v_+,w}_{i_-,j_+}=\hbar\sum\limits_{s\geq0}  E_{v_+,i_--n_1}t^{w-s-1}E_{i_--n_1,j_++m_1}t^{s+1},\\
a^{v_-,w}_{i_-,j_+}=\hbar\sum\limits_{s\geq0}  E_{v_-,i_--n_1}t^{w-s-1}E_{i_--n_1,j_++m_1}t^{s+1},\\
a^{v_+,w}_{i_-,j_-}=\delta(j_->i_-)\hbar\sum\limits_{s\geq0}  E_{v_+,i_--n_1}t^{w-s-1}E_{i_--n_1,-j_--n_1}t^{s+1}\\
\hphantom{a^{v_+,w}_{i_-,j_-}=}{}+\delta(j_-<i_-)\hbar\sum\limits_{s\geq0}  E_{v_+,i_--n_1}t^{w-s}E_{i_--n_1,-j_--n_1}t^{s},\\
a^{v_-,w}_{i_-,j_-}=\delta(j_->i_-)\hbar\sum\limits_{s\geq0}  E_{v_-,i_--n_1}t^{w-s-1}E_{i_--n_1,-j_--n_1}t^{s+1}\\
\hphantom{a^{v_-,w}_{i_-,j_-}=}{}+\delta(j_-<i_-)\hbar\sum\limits_{s\geq0}  E_{v_-,i_--n_1}t^{w-s}E_{i_--n_1,-j_--n_1}t^{s}.
\end{gather*}
We set \smash{$Y^{m_1+m_2|n_1+n_2,R}_{\hbar,\ve}\big(\widehat{\mathfrak{sl}}(m_2|n_2)\big)$} as a quotient algebra of \smash{$\widetilde{Y}^{m_1+m_2|n_1+n_2}_{\hbar,\ve}\big(\widehat{\mathfrak{sl}}(m_2|n_2)\big)$} divided by
\begin{alignat}{3}
&\bigl[\widetilde{H}_{i_+,1},E_{v,m_1+j_+}t^w\bigr]=a^{v,w}_{i_+,j_+}-a^{v,w}_{i_++1,j_+}&&\qquad \text{for }j_+\neq i_+,i_++1,&\label{Eq2.11-1}\\
&\bigl[\widetilde{H}_{i_+,1},E_{v_+,-n_1+j_-}t^w\bigr]=a^{v,w}_{i_+,j_-}-a^{v,w}_{i_++1,j_-},&\\
&\bigl[\widetilde{H}_{i_-,1},E_{v,-n_1+j_-}t^w\bigr]=a^{v,w}_{i_-,j_-}-a^{v,w}_{i_--1,j_-}&&\qquad\text{for }j_-\neq i_-,i_--1,&\\
&\bigl[\widetilde{H}_{i_-,1},E_{v,m_1+j_+}t^w\bigr]=a^{v,w}_{i_-,j_+}-a^{v,w}_{i_--1,j_+},&\\
&\bigl[\widetilde{H}_{i_+-1,1},E_{v,m_1+i_+}t^w\bigr]+\bigl[\widetilde{H}_{i_+,1},E_{v,m_1+i_+}t^w\bigr]&\nonumber\\
&\qquad=a^{v,w}_{i_+-1,i_+}-a^{v,w}_{i_++1,i_+}-\frac{\hbar}{2}E_{v_+,m_1+i_+}t^w && \qquad\text{for }i_+\neq 1,m_2,&\\
&\bigl[\widetilde{H}_{i_-+1,1},E_{v,-n_1+i_-}t^w\bigr]+\bigl[\widetilde{H}_{i_-,1},E_{v,-n_1+i_-}t^w\bigr]&\nonumber\\
&\qquad=a^{v,w}_{i_-+1,i_-}-a^{v_+,w}_{i_--1,i_-}-\frac{\hbar}{2}E_{v,-n_1+i_-}t^w&& \qquad \text{for }i_-\neq-1,-n_2,&\\
&\bigl[\widetilde{H}_{m_2,1},E_{v,m_1+j_+}t^w\bigr]=a^{v,w}_{m_2,j_+}-a^{v,w}_{-1,j_+}-\hbar E_{v,m_1+j_+}t^w &&\qquad \text{for }j_+\neq m_2,&\\
&\bigl[\widetilde{H}_{m_2,1},E_{v,-n_1+j_-}t^w\bigr]=a^{v,w}_{m_2,j_-}-a^{v,w}_{-1,j_-}&&\qquad \text{for }j_-\neq -1,&\\
&\bigl[\widetilde{H}_{m_2-1,1},E_{v,m_1+m_2}t^w\bigr]+\bigl[\widetilde{H}_{m_2,1},E_{v,m_1+m_2}t^w\bigr]&\\
&\qquad=a^{v,w}_{m_2-1,m_2}-a^{v,w}_{-1,m_2}-\frac{\hbar}{2}E_{v,m_1+m_2}t^w,&\\
&\makebox[0pt][l]{$\bigl[\widetilde{H}_{-1,1},E_{v,-1}t^w\bigr]+\bigl[\widetilde{H}_{m_2,1},E_{v,-1}t^w\bigr]=a^{v,w}_{m_2,-1}-a^{v,w}_{-2,-1}-\frac{\hbar}{2}E_{v,-1}t^w,$}&\\
&\bigl[\widetilde{H}_{0,1},E_{v_+,m_1+j_+}t^w\bigr]=a^{v,w}_{-n_2,j_+}-a^{v,w}_{1,j_+}&& \qquad \text{for }j_+\neq1,&\\
&\bigl[\widetilde{H}_{0,1},E_{v,-n_1+j_-}t^w\bigr]=a^{v,w}_{-n_2,j_-}-a^{v,w}_{1,j_-}&& \qquad \text{for }j_-\neq-n_2,&\\
&\makebox[0pt][l]{$\bigl[\widetilde{H}_{-n_2+1,1},E_{v,-n_1-n_2}t^w\bigr]+\bigl[\widetilde{H}_{0,1},E_{v,-n_1-n_2}t^w\bigr]$}&\nonumber\\
&\makebox[0pt][l]{$\qquad=a^{v,w}_{-n_2+1,-n_2}-a^{v,w}_{1,-n_2}+\frac{\hbar}{2}(m_2-n_2+1)E_{v,m_1+1}t^w+\ve E_{v,m_1+1}t^w,$}&\\
&\makebox[0pt][l]{$\bigl[\widetilde{H}_{1,1},E_{v,m_1+1}t^w\bigr]+\bigl[\widetilde{H}_{0,1},E_{v,m_1+1}t^w\bigr]=a^{v,w}_{-n_2,1}-a^{v,w}_{2,1}-\frac{\hbar}{2}E_{v,m_1+1}t^w$}& \label{Eq2.19-1}
\end{alignat}
for $1\leq i_+,j_+\leq m_2$, $-n_2\leq i_-,j_-\leq-1$ and $w\in\mathbb{Z}$.
\begin{Theorem}
There exists a homomorphism
\begin{gather*}
\Psi^{m_2|n_2,m_1+m_2|n_1+n_2,R}_2\colon\\
\qquad Y_{\hbar,\ve}^{m_1+m_2|n_1+n_2,R}\bigl(\widehat{\mathfrak{sl}}(m_2|n_2)\bigr)
\to\widetilde{Y}_{\hbar,\ve-(m_1-n_1)\hbar}\bigl(\widehat{\mathfrak{sl}}(m_1+m_2|n_1+n_2)\bigr)
\end{gather*}
given by
\begin{gather*}
\Psi^{m_2|n_2,m_1+m_2|n_1+n_2,R}_2(y)=y \qquad \text{for } y\in\widehat{\mathfrak{sl}}(m_1+m_2|n_1+n_2),\\
\Psi^{m_2|n_2,m_1+m_2|n_1+n_2,R}_2(Z_{i,r})=\Psi^{m_2|n_2,m_1+m_2|n_1+n_2}_2(Z_{i,r}) \qquad \text{for } Z=H,X^\pm \text{ and } r=0,1.
\end{gather*}
\end{Theorem}
\begin{proof}
It is enough to show the compatibility with \eqref{Eq2.11-1}--\eqref{Eq2.19-1}. By \eqref{J}, we find that the relations replacing \smash{$\widetilde{H}_{i,1}$} with \smash{$\Psi^{m_1+m_2|n_1+n_2}_2\big(\widetilde{H}_{i,1}\big)$} in \eqref{Eq2.11-1}--\eqref{Eq2.19-1} yields the same result as replacing~\smash{$\widetilde{H}_{i,1}$} with
\[
\ev_{\hbar,\ve-(m_1-n_1)\hbar}^{m_1+m_2|n_1+n_2,0}\big(\Psi^{m_1+m_2|n_1+n_2}_2\big(\widetilde{H}_{i,1}\big)\big).
\] Thus, it follows from a direct computation.
\end{proof}

\begin{Theorem}
There exists a homomorphism
\begin{gather*}
\Delta^{m_2|n_2}\colon\ Y_{\hbar,\ve}\big(\widehat{\mathfrak{sl}}(m_2|n_2)\big)\\
\hphantom{\Delta^{m_2|n_2}\colon\ }{}\quad \to Y_{\hbar,\ve-(m_1-n_1)\hbar}\big(\widehat{\mathfrak{sl}}(m_1+m_2|n_1+n_2)\big)\widehat{\otimes}Y^{m_1+m_2|n_1+n_2,R}_{\hbar,\ve}\big(\widehat{\mathfrak{sl}}(m_2|n_2)\big)
\end{gather*}
determined by
\begin{align*}
&\Delta^{m_2|n_2}(y)=1\otimes y+y\otimes 1\text{for }y\in\widehat{\mathfrak{sl}}(m_1+m_2|n_1+n_2),\\
&\Delta^{m_2|n_2}\big(X^+_{i,1}\big)=\begin{cases}
(\Psi^{m_2|n_2,m_1+m_2|n_1+n_2,R}\otimes\id)\circ\Delta\big(X^+_{i,1}\big)+Y^0_{i,i+1} \\
\hspace*{50mm}\text{for }1\leq i_+\leq m_2-1,\\
(\Psi^{m_2|n_2,m_1+m_2|n_1+n_2,R}\otimes\id)\circ\Delta\big(X^+_{m_2,1}\big)+Y^0_{m_2,-1}\\
\hspace*{50mm}\text{for }i=m_2,\\
(\Psi^{m_2|n_2,m_1+m_2|n_1+n_2,R}\otimes\id)\circ\Delta\big(X^+_{i,1}\big)+Y^0_{i,i-1} \\
\hspace*{50mm}\text{for }-n_2+1\leq i\leq -1,\\
(\Psi^{m_2|n_2,m_1+m_2|n_1+n_2,R}\otimes\id)\circ\Delta\big(X^+_{-n_2,1}\big)+Y^1_{-n_2,1}\\
\hspace*{50mm}\text{for }i=-n_2,
\end{cases}\\
&\Delta^{m_2|n_2}\big(X^-_{i,1}\big)=\begin{cases}
(\Psi^{m_2|n_2,m_1+m_2|n_1+n_2,R}\otimes\id)\circ\Delta\big(X^-_{i,1}\big)+Y^0_{i+1,i}\\
\hspace*{50mm}\text{for }1\leq i_+\leq m_2-1,\\
(\Psi^{m_2|n_2,m_1+m_2|n_1+n_2,R}\otimes\id)\circ\Delta\big(X^-_{m_2,1}\big)+Y^0_{-1,m_2}\\
\hspace*{50mm}\text{for }i=m_2,\\
(\Psi^{m_2|n_2,m_1+m_2|n_1+n_2,R}\otimes\id)\circ\Delta\big(X^-_{i,1}\big)-Y^0_{i-1,i}\\
\hspace*{50mm}\text{for }-n_2+1\leq i\leq -1,\\
(\Psi^{m_2|n_2,m_1+m_2|n_1+n_2,R}\otimes\id)\circ\Delta\big(X^-_{-n_2,1}\big)-Y^{-1}_{1,-n_2}\\
\hspace*{50mm}\text{for }i=-n_2,
\end{cases}\\
&\Delta^{m_2|n_2}\big(X^+_{i,1}\big)=\begin{cases}
(\Psi^{m_2|n_2,m_1+m_2|n_1+n_2,R}\otimes\id)\circ\Delta\big(H_{i,1}\big)+Y^0_{i,i}-Y^0_{i+1,i+1}\\
\hspace*{50mm}\text{for }1\leq i_+\leq m_2-1,\\
(\Psi^{m_2|n_2,m_1+m_2|n_1+n_2,R}\otimes\id)\circ\Delta\big(H_{m_2,1}\big)+Y^0_{m_2,m_2}+Y^0_{1,1}\\
\hspace*{50mm}\text{for }i=m_2,\\
(\Psi^{m_2|n_2,m_1+m_2|n_1+n_2,R}\otimes\id)\circ\Delta\big(H_{i,1}\big)-Y^0_{i,i}+Y^0_{i-1,i-1}\\
\hspace*{50mm}\text{for }-n_2+1\leq i\leq -1,\\
(\Psi^{m_2|n_2,m_1+m_2|n_1+n_2,R}\otimes\id)\circ\Delta\big(H_{-n_2,1}\big)-Y^0_{-n_2,-n_2}-Y^0_{1,1}\\
\hspace*{50mm}\text{for }i=-n_2,
\end{cases}
\end{align*}
where we set
\begin{align*}
Y_{i,j}^r&=\hbar\sum\limits_{\substack{s\in\mathbb{Z}\\u\in I_{m_1|n_1}}}  (-1)^{p(u)}E_{i,u}t^{-s}\otimes E_{u,j}t^{s+r}.
\end{align*}
\end{Theorem}
\begin{proof}
The compatibilities with \eqref{Eq2.2}--\eqref{Eq2.12} follows from a direct computation. It is enough to show the compatibility with \eqref{Eq2.1}.
Since we obtain
\begin{align*}
\big(\id\otimes\Psi^{m_2|n_2,m_1+m_2|n_1+n_2,R}\big)\circ\Delta^{m_2|n_2}=\Delta\circ\Psi^{m_2|n_2,m_1+m_2|n_1+n_2}_2
\end{align*}
by a direct computation, we have
\begin{align*}
&\id\otimes\Psi^{m_2|n_2,m_1+m_2|n_1+n_2,R}\big(\big[\Delta^{m_2|n_2}(H_{i,1}),\Delta^{m_2|n_2}(H_{j,1})\big]\big)\nonumber\\
&\qquad=\Delta\big(\big[\Psi^{m_2|n_2,m_1+m_2|n_1+n_2}_2(H_{i,1}),\Psi^{m_2|n_2,m_1+m_2|n_1+n_2}(H_{j,1})\big]\big)=0.
\end{align*}
Using \eqref{Eq2.11-1}--\eqref{Eq2.19-1}, we can write down $\big[\Delta^{m_2|n_2}(H_{i,1}),\Delta^{m_2|n_2}(H_{j,1})\big]$ as an element of the completion of \smash{$\bigotimes^2U\big(\widehat{\mathfrak{sl}}(m_1+m_2|n_1+n_2)\big)$} since \smash{$U\big(\widehat{\mathfrak{sl}}(m_1+m_2|n_1+n_2)\big)$} can be embedded into \smash{$\bigotimes^2 Y_{\hbar,\ve-(m_1-n_1)\hbar}\big(\widehat{\mathfrak{sl}}(m_1+m_2|n_1+n_2)\big)$}.
\end{proof}
\begin{Theorem}\label{Main4}
Assume that $\ve=k+M-N-u_l+q_l$. There exists a homomorphism
\begin{align*}
\Phi^R\colon\ Y^{u_1|q_1,R}_{\hbar,\ve}\big(\widehat{\mathfrak{sl}}(u_l|q_l)\big)\to\mathcal{U}\big(\mathcal{W}^k(\mathfrak{gl}(M|N),f)\big)
\end{align*}
determined by
\begin{alignat*}{3}
&\Phi^R(E_{v,i}t^s)=W^{(1)}_{v,i}t^s&& \qquad \text{for } v\in I_{u_1|u_l|q_1-q_l},\ i\in I_{u_1|q_1}\setminus I_{u_1-u_l|q_1-q_l} \qquad\text{and}&\\
&\Phi^R(Z_{i,r})=\Phi(Z_{i,r})&&\qquad \text{for }i\in I_{u_l|q_l},\ r=0,1.&
\end{alignat*}
\end{Theorem}
\begin{proof}
Compatibility with \eqref{Eq2.1}--\eqref{Eq2.10} follows from Theorem~\ref{Main3}.
By a direct computation, we obtain
\begin{gather*}
\big(W^{(1)}_{v.j}\big)_{(0)}W^{(2)}_{i,i}=(-1)^{p(i)}\big(W^{(1)}_{v,i}\big)_{(-1)}W^{(1)}_{i,j},\\
\big(W^{(1)}_{v.j}\big)_{(1)}W^{(2)}_{i,i}=-(-1)^{p(i)}W^{(1)}_{p,j},\\
\big(W^{(1)}_{v.j}\big)_{(r)}W^{(2)}_{i,i}=0 \qquad \text{if } r\geq2.
\end{gather*}
for $v\in I_{u_1-u_l|q_1-q_l}$, $i,j\not\in I_{u_1-u_l|q_1-q_l}$, $i\neq j$.
By a direct computation, we obtain
\begin{gather*}
\hbar\big(W^{(1)}_{v,u_1-u_l+i_+}\big)_{(-1)}W^{(1)}_{u_1-u_l+i_+,u_1-u_l+j_+}t^{w+1}+\big[U_{i_+},W^{(1)}_{v,u_1-u_l+j_+}t^w\big]\\
\qquad=\hbar\delta(j_+<i_+)\sum\limits_{s\geq0} W^{(1)}_{v,u_1-u_l+i_+}t^{w-s-1}W^{(1)}_{u_1-u_l+i_+,u_1-u_l+j_+}t^{s+1}\\
\qquad\quad{}+\hbar\delta(j_+>i_+)\sum\limits_{s\geq0} W^{(1)}_{v,u_1-u_l+i_+}t^{w-s-1}W^{(1)}_{u_1-u_l+i_+,u_1-u_l+j_+}t^{s+1}\\
\qquad\quad{}-\hbar\delta(j_+<i_+)wW^{(1)}_{v,u_1-u_l+j_+}t^w-\hbar\delta(j_+>i_+)(w+1)W^{(1)}_{v,u_1-u_l+j_+}t^w,\\
\hbar\big(W^{(1)}_{v,u_1-u_l+i_+}\big)_{(-1)}W^{(1)}_{u_1-u_l+i_+,-q_1+q_l+j_-}t^{w+1}+\big[U_{i_+},W^{(1)}_{v,-q_1+q_l+j_-}t^w\big]\\
\qquad=\hbar\sum\limits_{s\geq0}  W^{(1)}_{v,u_1-u_l+i_+}t^{w-s}W^{(1)}_{u_1-u_l+i_+,-q_1+q_l+j_-}t^{s}-(w+1)\hbar W^{(1)}_{v,-q_1+q_l+j_-}t^{w},\\
\hbar\big(W^{(1)}_{v,-q_1+q_l+i_-}\big)_{(-1)}W^{(1)}_{-q_1+q_l+i_-,u_1-u_l+j_+}t^{w+1}+[U_{i_-},W^{(1)}_{v,u_1-u_l+j_+}t^w]\\
\qquad=\hbar\sum\limits_{s\geq0}   W^{(1)}_{v,-q_1+q_l+i_-}t^{w-s-1}W^{(1)}_{-q_1+q_l+i_-,u_1-u_l+j_+}t^{s+1}-w\hbar W^{(1)}_{v,u_1-u_l+j_+}t^{w}\\
\qquad\quad{}-\hbar(W^{(1)}_{v,-q_1+q_l+i_-})_{(-1)}W^{(1)}_{-q_1+q_l+i_-,-q_1+q_l+j_-}t^{w+1}+\big[U_{i_-},W^{(1)}_{v,-q_1+q_l+j_-}t^w\big]\\
\qquad=\delta(i_-<j_-)\hbar\sum\limits_{s\geq0}  W^{(1)}_{v,-q_1+q_l+i_-}t^{w-s-1}W^{(1)}_{-q_1+q_l+i_-,-q_1+q_l+j_-}t^{s+1}\\
\qquad\quad{}+\delta(i_->j_-)\hbar\sum\limits_{s\geq0} W^{(1)}_{v,-q_1+q_l+i_-}t^{w-s}W^{(1)}_{-q_1+q_l+i_-,-q_1+q_l+j_-}t^{s}\\
\qquad\quad{}-w\delta(i_-<j_-)\hbar W^{(1)}_{v,-q_1+q_l+j_-}t^w-(w+1)\delta(i_->j_-)\hbar W^{(1)}_{v,-q_1+q_l+j_-}t^w.
\end{gather*}
By using these relations, we can prove the compatibility with \eqref{Eq2.11-1}--\eqref{Eq2.19-1}.
\end{proof}
\section[Compatibility of Phi\_l with the parabolic induction for a W-superalgebra in the special setting]{Compatibility of $\boldsymbol{\Phi_l}$ with the parabolic induction\\ for a $\boldsymbol{W}$-superalgebra in the special setting}\label{section8}
In this section, we assume that $u_1>u_2>\dots>u_l>0$, $q_1>q_2>\dots>q_l>0$, $u_l,q_l\geq 2$ and~${u_l\neq q_l}$. In this case, we can give generators of the $W$-algebra $\mathcal{W}^k(\mathfrak{gl}(M|N),f)$.
\begin{Theorem}\label{gens2}
For \smash{$a\in I_{u_1-u_s|q_1-q_s}\setminus I_{u_1-u_{s-1}|q_1-q_{s-1}}$} and \smash{$b\in I_{u_1-u_v|q_1-q_v}\setminus I_{u_1-u_{v-1}|q_1-q_{v-1}}$},
the following elements are contained in $\mathcal{W}^k(\mathfrak{gl}(M|N),f)$:
\begin{gather*}
W^{(1)}_{a,b}=\sum\limits_{1\leq r\leq s}e^{(r)}_{a,b}\text{for }s\leq v,\\
W^{(2)}_{a,b}=
\sum\limits_{\substack{\col(i)=\col(j)+1\\\row(i)=a,\,\row(j)=b}}e_{i,j}-\sum\limits_{1\leq r\leq s}\Gamma_{r}e^{(r)}_{a,b}[-2]\\
\hphantom{W^{(2)}_{a,b}=}{}
+\sum\limits_{\substack{1\leq r_1<r_2\leq s\\x> u_1-u_s}} (-1)^{p(x)+p(e_{i,v})p(e_{x,j})}e^{(r_1)}_{x,b}[-1]e^{(r_2)}_{a,x}[-1]\\
\hphantom{W^{(2)}_{a,b}=}{}
+\sum\limits_{\substack{1\leq r_1<r_2\leq s\\x< -q_1+q_s}} (-1)^{p(x)+p(e_{a,x})p(e_{b,x})}e^{(r_1)}_{x,b}[-1]e^{(r_2)}_{a,x}[-1]\\
\hphantom{W^{(2)}_{a,b}=}{}
-\sum\limits_{\substack{r_1\geq r_2\\q_s-q_1\leq x\leq q_{r_1}-q_1\\\row(i)=a,\, \row(j)=b}} (-1)^{p(x)+p(e_{a,x})p(e_{x,b})}e^{(r_1)}_{x,b}[-1]e^{(r_2)}_{a,x}[-1]\\
\hphantom{W^{(2)}_{a,b}=}{}
-\sum\limits_{\substack{r_1\geq r_2\\u_1-u_{r_1}\leq x\leq u_1-u_s}}  (-1)^{p(x)+p(e_{a,x})p(e_{x,b})}e^{(r_1)}_{x,b}[-1]e^{(r_2)}_{a,x}[-1]\qquad \text{if }s=v\pm1,
\end{gather*}
where we set
\[
\Gamma_r=\begin{cases}
\gamma_r&\text{if }s=v,v-1,\\
\delta(r\leq s-1)\gamma_r+\delta_{r,s}\alpha_{s}&\text{if }s=v+1.
\end{cases}
\]
\end{Theorem}
We can prove Theorem~\ref{gens2} by the same way as \cite[Theorem~10.22]{U13}.
\begin{Theorem}\label{Tinf}
Suppose that $k+M-n-u_1+q_1\neq0$ and $u_1-u_2+q_1-q_2\geq3$.
The elements \smash{$W^{(1)}_{a,b}$} and \smash{$W^{(2)}_{a,b}$} generate $\mathcal{W}^k(\mathfrak{gl}(M|N),f)$.
\end{Theorem}
The proof of Theorem~\ref{Tinf} can be proven by the same way as \cite[Theorem~3.6]{U4}. We will give the proof in the appendix.

Let us take an integer $1<x<l$ and set
\begin{align*}
&M_1=\sum\limits_{v=1}^x u_v,\qquad N_1=\sum\limits_{v=1}^x  q_v,\qquad
M_2=\sum\limits_{v=x+1}^l  u_v,\qquad N_2=\sum\limits_{v=x+1}^l  q_v.
\end{align*}
We define $f_1$ (resp.\ $f_2$) as a nilpotent element of type $\bigl(1^{u_1-u_2|q_1-q_2},2^{u_2-u_3|q_2-q_3},\dots,p^{u_x-0|q_x-0}\bigr)$ \big(resp.\ $\big(1^{u_{x+1}-u_{x+2}|q_{x+1}-q_{x+2}},2^{u_{x+2}-u_{x+3}|q_{x+2}-q_{x+3}},\dots,p^{u_l-0|q_l-0}\big)$\big) in $\mathfrak{gl}(M_1|N_1)$ (resp.\ $\mathfrak{gl}(M_2|N_2)$ by the same way as $f\in\mathfrak{gl}(M|N)$.

We denote the Miura maps as
\begin{align*}
&\mu_1\colon\ \mathcal{W}^{k+M_2-N_2}(\mathfrak{gl}(M_1|N_1),f_1)\to \bigotimes_{1\leq i\leq x}V^\kappa_i(\mathfrak{gl}(u_i|q_i)),\\
&\mu_2\colon\ \mathcal{W}^{k+M_1-N_1}(\mathfrak{gl}(M_2|N_2),f_2)\to \bigotimes_{x+1\leq i\leq l}V^\kappa_i(\mathfrak{gl}(u_i|q_i)),
\end{align*}
where $\kappa_i$ is an appropriate inner product on $\mathfrak{gl}(u_i|q_i)$.
\begin{Theorem}
Suppose that $k+M-N-u_1+q_1\neq0$.
There exists a homomorphism
\begin{align*}
\Delta_W\colon\ \mathcal{W}^k(\mathfrak{gl}(M|N),f)\to\mathcal{W}^{k+M_2-N_2}(\mathfrak{gl}(M_1|N_1),f_1)\otimes\mathcal{W}^{k+M_1-N_1}(\mathfrak{gl}(M_2|N_2),f_2)
\end{align*}
determined by $\mu=(\mu_1\otimes\mu_2)\circ\Delta_W$.
\end{Theorem}
\begin{proof}
By Theorem~\ref{Tinf}, it is enough to show \smash{$\mu\big(W^{(r)}_{a,b}\big)$} is contained in
\[
\mathcal{W}^{k+M_2-N_2}(\mathfrak{gl}(M_1|N_1),f_1)\otimes\mathcal{W}^{k+M_1-N_1}(\mathfrak{gl}(M_2|N_2),f_2)
\]
for $r=1,2$.
For the latter discussion, we only show the case that $a,b>u_1-u_l$ or $a,b<-q_1+q_l$. By the definition of \smash{$W^{(r)}_{a,b}$}, we have
\begin{gather*}
\mu\big(W^{(1)}_{a,b}\big)=W^{(1)}_{a,b}\otimes1+1\otimes W^{(1)}_{a,b},\\
\mu\big(W^{(2)}_{a,b}\big)=W^{(2)}_{a,b}\otimes 1+1\otimes W^{(2)}_{a,b}-\gamma_{p}\partial W^{(1)}_{a,b}\otimes 1\\
\hphantom{\mu(W^{(2)}_{a,b})=}{}-\sum\limits_{u_1-u_x< u\leq u_1-u_l,-q_1+q_l<u\leq -q_1+q_x}(-1)^{p(e_{a,u})p(e_{b,u})+p(u)}\big(W^{(1)}_{u,b}\big)_{(-1)}W^{(1)}_{a,u}\otimes 1\\
\hphantom{\mu(W^{(2)}_{a,b})=}{}-\sum\limits_{u>u_1-u_l,u<-q_1+q_l}\big((-1)^{p(e_{a,u})p(e_{b,u})+p(u)}W^{(1)}_{u,b}\big)\otimes W^{(1)}_{a,u}\\
\hphantom{\mu(W^{(2)}_{a,b})=}{}+\sum\limits_{u_1-u_{x+1}<u\leq u_1-u_l,-q_1+q_l\leq u<-q_1+q_{x+1}}(-1)^{p(u)}W^{(1)}_{a,u}\otimes W^{(1)}_{u,b}.\tag*{\qed}
\end{gather*}\renewcommand{\qed}{}
\end{proof}

By Theorems~\ref{Main3} and~\ref{Main4}, we obtain
\begin{align*}
&\Phi^1\colon\ Y_{\hbar,\ve+(u_x-q_x-u_l+q_l)\hbar}\big(\widehat{\mathfrak{sl}}(u_x|q_x)\big)\to\mathcal{U}\big(\mathcal{W}^{k+M_2-N_2}(\mathfrak{gl}(M_1|N_1),f_1)\big),\\
&\Phi^2\colon\ Y^{u_{x+1}|q_{x+1},R}_{\hbar,\ve}\big(\widehat{\mathfrak{sl}}(u_l|q_l)\big)\to\mathcal{U}\big(\mathcal{W}^{k+M_1-N_1}(\mathfrak{gl}(M_2|N_2),f_2)\big).
\end{align*}
For a complex number $a\in\mathbb{C}$, we set a homomorphism called the shift operator of the affine super Yangian
\begin{equation*}
\tau_a\colon\ Y_{\hbar,\ve}\bigl(\widehat{\mathfrak{sl}}(m|n)\bigr)\to Y_{\hbar,\ve}\bigl(\widehat{\mathfrak{sl}}(m|n)\bigr)
\end{equation*}
determined by \smash{$X^\pm_{i,0}\mapsto X^\pm_{i,0}$} and \smash{$H_{i,1}\mapsto H_{i,1}+a H_{i,0}$}. Then, by a direct computation, we obtain the compatibility with the coproduct for the affine super Yangian and the parabolic presentation for a $W$-superalgebra.
\begin{Corollary}
We obtain the following relations:
\begin{gather*}
\big(\big(\Phi^1\circ\tau_{(-\gamma_{w}+q_w-q_{w+1})\hbar}\circ\Psi^{u_{x+1}|q_{x+1},u_x|q_x}_2 \big)\otimes\Phi^2\big) \circ\Delta^{m_2|n_2}=\Delta_W\circ\Phi.
\end{gather*}
\end{Corollary}

\appendix

\section{Some formulas for Theorem~\ref{Commutativity}}
In this section, we prepare some formulas for Theorem~\ref{Commutativity}.
By a direct computation, we obtain the following formula.
\begin{Theorem}
For $i\neq j$ and $a,b\geq0$, the following relations hold:
\begin{gather}
\big[E_{i,z}t^{-v-b} E_{z,i}t^{v+b},E_{u,j}t^{-s-a}E_{j,u}t^{s+a}\big] \nonumber\\
\qquad{}=\delta_{u,i}E_{i,z}t^{-v-b} E_{z,j}t^{v+b-s-a}E_{j,u}t^{s+a}\nonumber\\
\quad\qquad{}-(-1)^{p(E_{z,i})p(E_{u,j})}\delta_{z,j}E_{i,z}t^{-v-b} E_{u,i}t^{v+b-s-a}E_{j,u}t^{s+a}\nonumber\\
\quad\qquad{}+(-1)^{p(E_{z,i})p(E_{u,j})}\delta_{u,z}E_{i,j}t^{-v-b-s-a}E_{z,i}t^{v+b}E_{j,u}t^{s+a}\nonumber\\
\quad\qquad{}-(-1)^{p(E_{z,i})p(E_{u,j})}\delta_{u,z}E_{u,j}t^{-s-a}E_{i,z}t^{-v-b} E_{j,i}t^{v+b+s+a}\nonumber\\
\quad\qquad{}+(-1)^{p(E_{z,i})p(E_{u,j})}\delta_{j,z}E_{u,j}t^{-s-a}E_{i,u}t^{-v-b+s+a}E_{z,i}t^{v+b}\nonumber\\
\quad\qquad{} -\delta_{i,u}E_{u,j}t^{-s-a}E_{j,z}t^{-v-b+s+a}E_{z,i}t^{v+b},\label{e1}\\
[E_{i,z}t^{-v-b} E_{z,i}t^{v+b},E_{j,u}t^{-s-a}E_{u,j}t^{s+a}]\nonumber\\
\qquad{}=-(-1)^{p(E_{z,i})p(E_{u,j})}\delta_{u,z}E_{i,z}t^{-v-b}E_{j,i}t^{v+b-s-a}E_{u,j}t^{s+a}\nonumber\\
\quad \qquad{} +(-1)^{p(E_{z,i})p(E_{u,j})}\delta_{j,z}E_{i,u}t^{-v-b-s-a}E_{z,i}t^{v+b}E_{u,j}t^{s+a}\nonumber\\
\quad \qquad{}-\delta_{i,u}E_{j,z}t^{-v-b-s-a}E_{z,i}t^{v+b}E_{u,j}t^{s+a}\nonumber\\
\quad \qquad{} +\delta_{i,u}E_{j,u}t^{-s-a}E_{i,z}t^{-v-b} E_{z,j}t^{v+b+s+a}\nonumber\\
\quad \qquad{} -(-1)^{p(E_{z,i})p(E_{u,j})}\delta_{j,z}E_{j,u}t^{-s-a}E_{i,z}t^{-v-b} E_{u,i}t^{v+b+s+a}\nonumber\\
\quad \qquad{}  +(-1)^{p(E_{z,i})p(E_{u,j})}\delta_{u,z}E_{j,u}t^{-s-a}E_{i,j}t^{s+a-v-b}E_{z,i}t^{v+b},\label{e2}\\
[E_{z,i}t^{-v-b} E_{i,z}t^{v+b},E_{u,j}t^{-s-a}E_{j,u}t^{s+a}]\nonumber\\
\qquad =\delta_{u,z}E_{z,i}t^{-v-b} E_{i,j}t^{v+b-s-a}E_{j,u}t^{s+a}\nonumber\\
\quad \qquad{} +(-1)^{p(E_{z,i})p(E_{u,j})}\delta_{i,u}E_{z,j}t^{-v-b-s-a}E_{i,z}t^{v+b}E_{j,u}t^{s+a}\nonumber\\
\quad \qquad{}-\delta_{j,z}E_{u,i}t^{-v-b-s-a}E_{i,z}t^{v+b}E_{j,u}t^{s+a}\nonumber\\
\quad \qquad{} +\delta_{z,j}E_{u,j}t^{-s-a}E_{z,i}t^{-v-b} E_{i,u}t^{v+b+s+a}\nonumber\\
\quad \qquad{} -(-1)^{p(E_{z,i})p(E_{u,j})}\delta_{i,u}E_{u,j}t^{-s-a}E_{z,i}t^{-v-b} E_{j,z}t^{v+b+s+a}\nonumber\\
\quad \qquad{} -\delta_{z,u}E_{u,j}t^{-s-a}E_{j,i}t^{-v-b+s+a}E_{i,z}t^{v+b}.\label{e3}
\end{gather}
\end{Theorem}
\section{Proof of Theorem~\ref{Commutativity}}
In this appendix, we give a proof of Theorem~\ref{Commutativity}.
Since the affine super Yangian \smash{$Y_{\hbar,\ve}\bigl(\widehat{\mathfrak{sl}}(m|n)\bigr)$} is generated by \smash{$\big\{X^\pm_{i,0}\big\}_{i\in I_{m|n}}$} and \smash{$\widetilde{H}_{1,1}$}, we need to show that
\[
\big\{\Psi_1^{m_1|n_1,m_1+m_2|n_1+n_2}\big(X^\pm_{i,0}\big)\big\}_{i\in I_{m_1|n_1}}\qquad \text{and}\qquad \Psi_1^{m_1|n_1,m_1+m_2|n_1+n_2}\big(\widetilde{H}_{1,1}\big)
\]
 commute with \smash{$\big\{\Psi_2^{m_2|n_2,m_1+m_2|n_1+n_2}\big(X^\pm_{i,0}\big)\big\}_{i\in I_{m_2|n_2}}$} and \smash{$\Psi_2^{m_2|n_2,m_1+m_2|n_1+n_2}\big(\widetilde{H}_{1,1}\big)$}.
The commutatibility with
\[
\Psi_1^{m_1|n_1,m_1+m_2|n_1+n_2}\big(X^\pm_{i,0}\big) \qquad \text{and}\qquad \Psi_2^{m_2|n_2,m_1+m_2|n_1+n_2}\big(X^\pm_{i,0}\big)
\]
 follows from the definitions of two edge contractions.
Thus, it is enough to show the following three relations:
\begin{gather}
\big[\Psi_1^{m_1|n_1,m_1+m_2|n_1+n_2}\big(X^\pm_{i,0}\big), \Psi_2^{m_2|n_2,m_1+m_2|n_1+n_2}\big(\widetilde{H}_{1,1}\big)\big]=0,\label{gather1}\\
\big[\Psi_1^{m_1|n_1,m_1+m_2|n_1+n_2}\big(\widetilde{H}_{1,1}\big), \Psi_2^{m_2|n_2,m_1+m_2|n_1+n_2}\big(X^\pm_{i,0}\big)\big]=0,\label{gather2}\\
\big[\Psi_1^{m_1|n_1,m_1+m_2|n_1+n_2}\big(\widetilde{H}_{1,1}\big), \Psi_2^{m_2|n_2,m_1+m_2|n_1+n_2}\big(\widetilde{H}_{1,1}\big)\big]=0.\label{gather3}
\end{gather}
We will prove \eqref{gather1}--\eqref{gather3} in the following three appendices.

\subsection{The proof of (\ref{gather1})}
This appendix is devoted to the proof of \eqref{gather1}. We only show the $+$ case. The $-$ case can be derived from $+$ case by using the anti-automorphism $\omega$. The cases that $i\neq 0, m_1$ can be proven by a direct computation. We only show the case that $i=0$ and $i=m$.

First, we show the case that $i=m_1$. By the definition of two edge contractions, we have
\begin{align}
&\big[\Psi_1^{m_1|n_1,m_1+m_2|n_1+n_2}\big(X^+_{m_1,0}\big),\Psi_2^{m_2|n_2,m_1+m_2|n_1+n_2}\big(\widetilde{H}_{1,1}\big)\big]\nonumber\\
&\qquad=\big[E_{m_1,-1},\widetilde{H}_{1+m_1,1}\big]+[E_{m_1,-1},R_1-R_2]+[E_{m_1,-1},S_1-S_2].\label{above}
\end{align}
We will compute each terms of the right-hand side of \eqref{above}. In order to simplify the notation, here after, we denote the $i$-th term of the right-hand side of the equation $(\cdot)$ by $(\cdot)_i$. By a direct computation, we obtain
\begin{align}
&\eqref{above}_2=\hbar\sum\limits_{v\geq0}  E_{m_1,1+m_1}t^{-v}E_{1+m_1,-1}t^{v}-\hbar\sum\limits_{v\geq0}  E_{m_1,2+m_1}t^{-v}E_{2+m_1,-1}t^{v},\label{ER1}\\
&\eqref{above}_3=-\hbar\sum\limits_{v\geq0}  E_{m_1,1+m_1}t^{-v-1}E_{1+m_1,-1}t^{v+1}+\hbar\sum\limits_{v\geq0}  E_{m_1,2+m_1}t^{-v-1}E_{2+m_1,-1}t^{v+1}.\label{ES1}
\end{align}
We can rewrite{\samepage
\begin{align}
\eqref{above}_1&=\big[[E_{m_1,3+m_1},E_{3+m_1,-1}],\widetilde{H}_{1+m_1,1}\big]\nonumber\\
&=\big[\big[E_{m_1,3+m_1},\widetilde{H}_{1+m_1,1}\big],E_{3+m_1,-1}\big]+\big[E_{m_1,3+m_1},\big[E_{3+m_1,-1},\widetilde{H}_{1+m_1,1}\big]\big].\label{EE}
\end{align}}%
Since \smash{$E_{3+m_1,-1}=\big(\prod_{i=3+m_1}^{m_1+m_2-1} \ad\big(X^+_{i,0}\big)\big)X^+_{m_1+m_2,0}$} holds, we obtain
\smash{$\big[E_{3+m_1,-1},\widetilde{H}_{1+m_1,1}\big]=0$}
by~\eqref{Eq2.6}. Thus, $\eqref{EE}_2$ is equal to zero.
Since \smash{$E_{m_1,3+m_1}=\big[X^+_{m_1,0},\big[X^+_{1+m_1,0},X^+_{2+m_1,0}\big]\big]$} holds, we~have
\begin{align*}
\big[E_{m_1,3+m_1},\widetilde{H}_{1+m_1,1}\big] ={}&\big[X^+_{m_1,1},\big[X^+_{1+m_1,0},X^+_{2+m_1,0}\big]\big]\\
&{}{-}\, 2\big[X^+_{m_1,0}, \big[X^+_{1+m_1,1},X^+_{2+m_1,0}\big]\big]+\big[X^+_{m_1,0},\big[X^+_{1+m_1,0},X^+_{2+m_1,1}\big]\big].
\end{align*}
By \eqref{Eq2.8} and \eqref{Eq2.13}, we have
\begin{align}
&\bigl[X^+_{m_1,1},\bigl[X^+_{1+m_1,0},X^+_{2+m_1,0}\bigr]\bigr]-\bigl[X^+_{m_1,0},\bigl[X^+_{1+m_1,1},X^+_{2+m_1,0}\bigr]\bigr]\nonumber\\
&\qquad=-\frac{\hbar}{2}\big\{X^+_{m_1,0},\bigl[X^+_{1+m_1,0},X^+_{2+m_1,0}\bigr]\big\}=-\hbar E_{m_1,m_1+1}E_{m_1+1,m_1+3}+\frac{\hbar}{2}E_{m_1,m_1+3},\label{45}\\
&-\bigl[X^+_{m_1,0},\bigl[X^+_{1+m_1,1},X^+_{2+m_1,0}\bigr]\bigr]+\bigl[X^+_{m_1,0},\bigl[X^+_{1+m_1,0},X^+_{2+m_1,1}\bigr]\bigr]\nonumber\\
&\qquad=\frac{\hbar}{2}\big\{\bigl[X^+_{m_1,0},X^+_{1+m_1,0}\bigr],X^+_{2+m_1,0}\big\}=\hbar E_{m_1,m_1+2}E_{m_1+2,m_1+3}-\frac{\hbar}{2}E_{m_1,m_1+3}.\label{46}
\end{align}
By \eqref{45} and \eqref{46}, we have
\begin{align}
\eqref{EE}_1&=-\hbar E_{m_1,m_1+1}E_{m_1+1,-1}+\hbar E_{m_1,m_1+2}E_{m_1+2,-1}.\label{47-1}
\end{align}
By adding \eqref{ER1}, \eqref{ES1} and \eqref{47-1}, we find that \eqref{above} is equal to zero.

Next, we show the case that $i=0$.
By the definition of two edge contractions, we have
\begin{align}
&\big[\Psi_1^{m_1|n_1,m_1+m_2|n_1+n_2}\big(X^+_{0,0}\big),\Psi_2^{m_2|n_2,m_1+m_2|n_1+n_2}\big(\widetilde{H}_{1,1}\big)\big]\nonumber\\
&\qquad=\big[E_{-n_1,1}t,\widetilde{H}_{1+m_1,1}\big]+[E_{-n_1,1}t,R_1-R_2]+[E_{-n_1,1}t,S_1-S_2].\label{above2}
\end{align}
By a direct computation, we obtain
\begin{align}
\eqref{above2}_2&=-\hbar\sum\limits_{v\geq0}  E_{-n_1,1+m_1}t^{-v}E_{1+m_1,1}t^{v+1}+\hbar\sum\limits_{v\geq0}  E_{-n_1,2+m_1}t^{-v}E_{2+m_1,1}t^{v+1},\label{ER2}\\
\eqref{above2}_3&=\hbar\sum\limits_{v\geq0}  E_{-n_1,1+m_1}t^{-v} E_{1+m_1,1}t^{v+1}-\hbar\sum\limits_{v\geq0}  E_{-n_1,2+m_1}t^{-v} E_{2+m_1,1}t^{v+1}.\label{ES2}
\end{align}
Since we obtain \smash{$E_{-n_1,1}t=\big(\prod_{i=-n_1}^{-n_1-n_2+1}\big)\ad\big(X^+_{i,0}\big)X^+_{-n_1-n_2,0}$}, we find that~$\eqref{above2}_1$ is equal to zero by~\eqref{Eq2.13}. By adding \eqref{ER2} and \eqref{ES2}, we obtain $\eqref{above2}=0$.

\subsection{The proof of (\ref{gather2})}
In this appendix, we will show the relation~\eqref{gather2} in the same way as~\eqref{gather1}. Since the $-$ case can be derived from $+$ case by using the anti-automorphism $\omega$, we will only show the $+$ case. Moreover, we only show the case that $i=0$. The other cases can be proven in a similar way.
By the definition of \smash{$\Psi_1^{m_1|n_1,m_1+m_2|n_1+n_2}$} and \smash{$\Psi_2^{m_2|n_2,m_1+m_2|n_1+n_2}$}, we have
\begin{align}
&\big[\Psi_1^{m_1|n_1,m_1+m_2|n_1+n_2}\big(\widetilde{H}_{1,1}\big),\Psi_2^{m_2|n_2,m_1+m_2|n_1+n_2}\big(X^+_{0,0}\big)\big)\big]\label{above3}\\
&\qquad=[\widetilde{H}_{1,1},E_{-n_1-n_2,m_1+1}t]-[P_1-P_2,E_{-n_1-n_2,m_1+1}t]+[Q_1-Q_2,E_{-n_1-n_2,m_1+1}t].\nonumber
\end{align}
By a direct computation, we obtain
\begin{align}
\eqref{above2}_2&=\hbar\sum\limits_{v\geq0}   E_{1,m_1+1}t^{-v-1} E_{-n_1-n_2,1}t^{v+2}-\hbar\sum\limits_{v\geq0}   E_{2,m_1+1}t^{-v-1} E_{-n_1-n_2,2}t^{v+2},\label{EP1}\\
\eqref{above2}_3&=-\hbar\sum\limits_{v\geq0}   E_{1,m_1+1}t^{-v} E_{-n_1-n_2,1}t^{v+1}+\hbar\sum\limits_{v\geq0}   E_{2,m_1+1}t^{-v} E_{-n_1-n_2,2}t^{v+1}.\label{EQ1}
\end{align}
Since $E_{-n_1-n_2,m_1+1}t=[E_{-n_1-n_2,3}t,E_{3,m_1+1}]$ holds, we have
\begin{align}
\big[\widetilde{H}_{1,1},E_{-n_1-n_2,m_1+1}t\big]&=\big[\widetilde{H}_{1,1},[E_{-n_1-n_2,3}t,E_{3,m_1+1}]\big]\label{EEE}\\
&=\big[\big[\widetilde{H}_{1,1},E_{-n_1-n_2,3}t\big],E_{3,m_1+1}\big]+\big[E_{-n_1-n_2,3}t,[\widetilde{H}_{1,1},E_{3,m_1+1}\big]\big].\nonumber
\end{align}
Since \smash{$E_{3,m_1+1}=\big(\prod_{i=3}^{m_1-1}\ad\big(X^+_{i,0}\big)\big)X^+_{m_1,0}$} holds by a direct computation, we obtain
\begin{align*}
\eqref{EEE}_2&=[E_{-n_1-n_2,3}t,0]=0
\end{align*}
by \eqref{Eq2.13}. Since we obtain
\smash{$E_{-n_1-n_2,3}t=\big[X^+_{0,0},\big[X^+_{1,0},X^+_{2,0}\big]\big]$}, we have
\begin{align*}
&\big[\widetilde{H}_{1,1},\big[X^+_{0,0},\big[X^+_{1,0},X^+_{2,0}\big]\big]\big]\\
&\qquad=-\big[X^+_{0,1},\big[X^+_{1,0},X^+_{2,0}\big]\big]+2\big[X^+_{0,0},\big[X^+_{1,1},X^+_{2,0}\big]\big]-\big[X^+_{0,0},\big[X^+_{1,0},X^+_{2,1}\big]\big]
\end{align*}
by \eqref{Eq2.6}. By \eqref{Eq2.8} and \eqref{Eq2.13}, we obtain
\begin{align}
&-\big[X^+_{0,1},\big[X^+_{1,0},X^+_{2,0}\big]\big]+\big[X^+_{0,0},\big[X^+_{1,1},X^+_{2,0}\big]\big]\nonumber\\
&\qquad=\frac{\hbar}{2}\big\{X^+_{0,0},\big[X^+_{1,0},X^+_{2,0}\big]\big\}=\hbar E_{1,3}E_{-n_1-n_2,1}t+\frac{\hbar}{2}E_{-n_1-n_2,3}t,\label{47}\\
&\big[X^+_{0,0},\big[X^+_{1,1},X^+_{2,0}\big]\big]-\big[X^+_{0,0},\big[X^+_{1,0},X^+_{2,1}\big]\big]\nonumber\\
&\qquad=-\frac{\hbar}{2}\big\{\big[X^+_{0,0},X^+_{1,0}\big],X^+_{2,0}\big\}=-\hbar E_{2,3}E_{-n_1-n_2,2}t-\frac{\hbar}{2}E_{-n_1-n_2,3}t.\label{48}
\end{align}
By adding \eqref{47} and \eqref{48}, we obtain
\begin{align}
\eqref{EEE}_1&=\hbar E_{1,m_1+1}E_{-n_1-n_2,1}t-\hbar E_{2,m_1+1}E_{-n_1-n_2,2}t.\label{48-1}
\end{align}
By adding \eqref{EP1}, \eqref{EQ1} and \eqref{48-1}, we find that \eqref{above3} is equal to zero.
\subsection{The proof of (\ref{gather3})}
This appendix is devoted to the proof of \eqref{gather3}. Similarly to \cite[Section~3]{U2}, we define the elements of \smash{$\widetilde{Y}_{\hbar,\ve}\bigl(\widehat{\mathfrak{sl}}(m|n)\bigr)$}:
\begin{align*}
J(h_i)&=\widetilde{H}_{i,1}+A_i-A_{i+1},\
J\big(x^\pm_i\big)=\begin{cases}
\mp(-1)^{p(i)}\big[J(h_{i-1}),x^\pm_i\big]&\text{if }i\neq 0,\\
\mp\big[J(h_1),x^\pm_0\big]&\text{if }i=0,
\end{cases}
\end{align*}
where
\begin{align*}
A_i
={}&\frac{\hbar}{2}\sum\limits_{\substack{s\geq0\\i<u\leq m+n}}  E_{u,i}t^{-s}E_{i,u}t^s-\frac{\hbar}{2}(-1)^{p(i)}\sum\limits_{\substack{s\geq0\\1\leq u<i}}  (-1)^{p(u)}E_{i,u}t^{-s}E_{u,i}t^s\\
&{}{+}\,\frac{\hbar}{2}\sum\limits_{\substack{s\geq0\\1\leq u<i}}  E_{u,i}t^{-s-1}E_{i,u}t^{s+1}-\frac{\hbar}{2}(-1)^{p(i)}\sum\limits_{\substack{s\geq0\\i<u\leq m+n}} (-1)^{p(u)} E_{i,u}t^{-s-1}E_{u,i}t^{s+1}.
\end{align*}
For the simplicity, we sometimes denote
\begin{alignat*}{3}
&A_{i,1}=\frac{\hbar}{2}\sum\limits_{\substack{s\geq0\\i<u\leq m+n}}  E_{u,i}t^{-s}E_{i,u}t^s,&&\qquad A_{i,2}=\frac{\hbar}{2}(-1)^{p(i)}\sum\limits_{\substack{s\geq0\\1\leq u<i}}  (-1)^{p(u)}E_{i,u}t^{-s}E_{u,i}t^s&\\
&A_{i,3}=\frac{\hbar}{2}\sum\limits_{\substack{s\geq0\\1\leq u<i}}  E_{u,i}t^{-s-1}E_{i,u}t^{s+1},&&\qquad A_{i,4}=\frac{\hbar}{2}(-1)^{p(i)}\sum\limits_{\substack{s\geq0\\i<u\leq m+n}} (-1)^{p(u)} E_{i,u}t^{-s-1}E_{u,i}t^{s+1}.&
\end{alignat*}

Let $\alpha$ be a positive real root of \smash{$\widehat{\mathfrak{sl}}(m|n)$}. We take $x^\pm_\alpha$ be a non-zero element of the root space with $\pm\alpha$. We also take simple roots $\{\alpha_i\}_{i\in I_{m|n}}$ of \smash{$\widehat{\mathfrak{sl}}(m|n)$}.
\begin{Lemma}[{\cite[Proposition 4.26]{U2}}]\label{J}
There exists a complex number $c_{\alpha,i}$ satisfying that
\begin{equation*}
(\alpha_j,\alpha)\big[J(h_i),x^\pm_\alpha\big]-(\alpha_i,\alpha)\big[J(h_j),x^\pm_\alpha\big]=\pm c_{\alpha,i}x_\alpha^\pm,
\end{equation*}
where $(\, ,\, )$ is defined by $(\alpha_i,\alpha_j)=a_{i,j}$.
\end{Lemma}
By Lemma~\ref{J} and the definition of $P_i$, $Q_i$, $R_j$, $S_j$, it is enough to show the relation
\begin{align}
[P_i-Q_i,R_j+S_j]+[A_i,R_j+S_j]-[P_i-Q_i,A_{m_1+j}]&=0\label{conclusion}
\end{align}
for $i,j=1,2$. By a direct computation, we obtain
\begin{gather}
[P_i,R_{j}]
={-}\hbar^2\sum\limits_{s,v\geq0}  \sum\limits_{u=-n_1}^{-1}E_{i,j+m_1}t^{-v-1} E_{u,i}t^{v+1-s}E_{j+m_1,u}t^{s}\nonumber\\
\hphantom{[P_i,R_{j}]=}{}\hspace{0.2mm}
+\hbar^2\sum\limits_{s,v\geq0}  \sum\limits_{u=-n_1}^{-1}E_{u,j+m_1}t^{-s}E_{i,u}t^{-v-1+s}E_{j+m_1,i}t^{v+1},\label{PR}\\
[P_i,S_{j}]
=\hbar^2\sum\limits_{s,v\geq0}  \sum\limits_{z=m_1+1}^{m_1+m_2}  E_{i,z}t^{-v-1} E_{z,j+m_1}t^{v-s}E_{j+m_1,i}t^{s+1}\nonumber\\
\hphantom{[P_i,S_{j}]=}{}\hspace{0.2mm}
-\hbar^2\sum\limits_{s,v\geq0}  \sum\limits_{u=1}^{m_1} E_{i,j+m_1}t^{-v-1} E_{u,i}t^{v-s}E_{j+m_1,u}t^{s+1}\nonumber\\
\hphantom{[P_i,S_{j}]=}{}\hspace{0.2mm}
+\hbar^2\sum\limits_{s,v\geq0}  \sum\limits_{u=1}^{m_1} E_{u,j+m_1}t^{-s-1}E_{i,u}t^{-v+s}E_{j+m_1,i}t^{v+1}\nonumber\\
\hphantom{[P_i,S_{j}]=}{}\hspace{0.2mm}
-\hbar^2\sum\limits_{s,v\geq0}  \sum\limits_{z=m_1+1}^{m_1+m_2}  E_{i,j+m_1}t^{-s-1}E_{j+m_1,z}t^{-v+s}E_{z,i}t^{v+1},\label{PS}\\
[Q_i,R_{j}]=0,\nonumber\\
[Q_i,S_{j}]
=\hbar^2\sum\limits_{s,v\geq0}  \sum\limits_{z=-n_1-n_2}^{-n_1-1}  E_{i,z}t^{-v-1} E_{z,j+m_1}t^{v-s}E_{j+m_1,i}t^{s+1}\nonumber\\
\hphantom{[Q_i,S_{j}]=}{}\hspace{0.2mm}
-\hbar^2\sum\limits_{s,v\geq0}  \sum\limits_{z=-n_1-n_2}^{-n_1-1}  E_{i,j+m_1}t^{-s-1}E_{j+m_1,z}t^{-v+s}E_{z,i}t^{v+1}\label{QS}
\end{gather}
By the definition of $A_i$ and \eqref{e1}, we can rewrite $[P_i,A_{j+m_1}]$ as
\begin{align*}
&[P_i,A_{j+m_1,1}]-[P_i,A_{j+m_1,2}]+[P_i,A_{j+m_1,3}]-[P_i,A_{j+m_1,4}].
\end{align*}
By \eqref{e1}, we obtain
\begin{align}
[P_i,A_{j+m_1,1}]
={}&{-}\frac{\hbar^2}{2}\sum\limits_{\substack{s,v\geq0\\u>j+m_1}}  E_{i,j+m_1}t^{-v-1} E_{u,i}t^{v+1-s}E_{j+m_1,u}t^{s}\nonumber\\
&{+}\,\frac{\hbar^2}{2}\sum\limits_{\substack{s,v\geq0}} \sum\limits_{z=j+m_1+1}^{m_1+m_2}  E_{i,j+m_1}t^{-v-1-s}E_{z,i}t^{v+1}E_{j+m_1,z}t^{s}\nonumber\\
&{-}\,\frac{\hbar^2}{2}\sum\limits_{\substack{s,v\geq0}} \sum\limits_{z=j+m_1+1}^{m_1+m_2}  E_{z,j+m_1}t^{-s}E_{i,z}t^{-v-1} E_{j+m_1,i}t^{v+1+s}\nonumber\\
&{+}\,\frac{\hbar^2}{2}\sum\limits_{\substack{s,v\geq0\\u>j+m_1}}  E_{u,j+m_1}t^{-s}E_{i,u}t^{-v-1+s}E_{j+m_1,i}t^{v+1}.\label{PA1-1}
\end{align}
By a direct computation, we obtain
\begin{gather*}
\eqref{PA1-1}_1+\eqref{PA1-1}_2={-}\frac{\hbar^2}{2}\sum\limits_{\substack{s,v\geq0}} \sum\limits_{u=-n_1-n_2}^{-1} E_{i,j+m_1}t^{-v-1} E_{u,i}t^{v+1-s}E_{j+m_1,u}t^{s}\nonumber\\
\hphantom{\eqref{PA1-1}_1+\eqref{PA1-1}_2=}{}\hspace{0.2mm}
-\frac{\hbar^2}{2}\sum\limits_{\substack{s,v\geq0}} \sum\limits_{z=j+m_1+1}^{m_1+m_2}  E_{i,j+m_1}t^{-v-1}E_{z,i}t^{-s}E_{j+m_1,z}t^{s+v+1},\nonumber\\
\eqref{PA1-1}_3+\eqref{PA1-1}_4=\frac{\hbar^2}{2}\sum\limits_{\substack{s,v\geq0}} \sum\limits_{z=j+m_1+1}^{m_1+m_2}  E_{z,j+m_1}t^{-s-v-1}E_{i,z}t^{s} E_{j+m_1,i}t^{v+1}\nonumber\\
\hphantom{\eqref{PA1-1}_3+\eqref{PA1-1}_4=}{}\hspace{0.2mm}
+\frac{\hbar^2}{2}\sum\limits_{\substack{s,v\geq0}} \sum\limits_{u=-n_1-n_2}^{-1} E_{u,j+m_1}t^{-s}E_{i,u}t^{-v-1+s}E_{j+m_1,i}t^{v+1}.
\end{gather*}
Then, we can rewrite
\begin{align}
[P_i,A_{j+m_1,1}]
={}&{-}\frac{\hbar^2}{2}\sum\limits_{\substack{s,v\geq0}} \sum\limits_{u=-n_1-n_2}^{-n_1-1} E_{i,j+m_1}t^{-s-v-1} E_{u,i}t^{v+1}E_{j+m_1,u}t^{s}\nonumber\\
&{-}\,\frac{\hbar^2}{2}\sum\limits_{\substack{s,v\geq0}} \sum\limits_{u=-n_1-n_2}^{-n_1-1} E_{i,j+m_1}t^{-v-1} E_{u,i}t^{-s}E_{j+m_1,u}t^{s+v+2}\nonumber\\
&{-}\,\frac{\hbar^2}{2}\sum\limits_{\substack{s,v\geq0}} \sum\limits_{u=-n_1}^{-1} E_{i,j+m_1}t^{-v-1} E_{u,i}t^{v+1-s}E_{j+m_1,u}t^{s}\nonumber\\
&{-}\,\frac{\hbar^2}{2}\sum\limits_{\substack{s,v\geq0}} \sum\limits_{z=j+m_1+1}^{m_1+m_2}  E_{i,j+m_1}t^{-v-1}E_{z,i}t^{-s}E_{j+m_1,z}t^{s+v+1}\nonumber\\
&{+}\,\frac{\hbar^2}{2}\sum\limits_{\substack{s,v\geq0}} \sum\limits_{z=j+m_1+1}^{m_1+m_2}  E_{z,j+m_1}t^{-s-v-1}E_{i,z}t^{s} E_{j+m_1,i}t^{v+1}\nonumber\\
&{+}\,\frac{\hbar^2}{2}\sum\limits_{\substack{s,v\geq0}} \sum\limits_{u=-n_1-n_2}^{-n_1-1} E_{u,j+m_1}t^{-s-v-1}E_{i,u}t^{s}E_{j+m_1,i}t^{v+1}\nonumber\\
&{+}\,\frac{\hbar^2}{2}\sum\limits_{\substack{s,v\geq0}} \sum\limits_{u=-n_1-n_2}^{-n_1-1} E_{u,j+m_1}t^{-s}E_{i,u}t^{-v-1}E_{j+m_1,i}t^{s+v+1}\nonumber\\
&{+}\,\frac{\hbar^2}{2}\sum\limits_{\substack{s,v\geq0}} \sum\limits_{u=-n_1}^{-1} E_{u,j+m_1}t^{-s}E_{i,u}t^{-v-1+s}E_{j+m_1,i}t^{v+1}.\label{PA1}
\end{align}
Similarly, by \eqref{e1} and \eqref{e2}, we can rewrite
\begin{gather}
[P_i,A_{j+m_1,2}]
=\frac{\hbar^2}{2}\sum\limits_{\substack{s,v\geq0}} \sum\limits_{u=1}^{m_1} E_{i,u}t^{-v-1-s}E_{j+m_1,i}t^{v+1}E_{u,j+m_1}t^{s}\nonumber\\
\hphantom{[P_i,A_{j+m_1,2}]=}{}\hspace{0.2mm} -\frac{\hbar^2}{2}\sum\limits_{\substack{s,v\geq0}} \sum\limits_{z=j+m_1}^{m_1+m_2}  E_{j+m_1,z}t^{-v-1-s}E_{i,j+m_1}t^{s}E_{z,i}t^{v+1}\nonumber\\
\hphantom{[P_i,A_{j+m_1,2}]=}{}\hspace{0.2mm} +\frac{\hbar^2}{2}\sum\limits_{\substack{s,v\geq0}} \sum\limits_{z=j+m_1}^{m_1+m_2}  E_{i,z}t^{-v-1}E_{j+m_1,i}t^{-s} E_{z,j+m_1}t^{v+1+s}\nonumber\\
\hphantom{[P_i,A_{j+m_1,2}]=}{}\hspace{0.2mm} -\frac{\hbar^2}{2}\sum\limits_{\substack{s,v\geq0}} \sum\limits_{u=1}^{m_1} E_{j+m_1,u}t^{-s}E_{i,j+m_1}t^{-v-1} E_{u,i}t^{v+1+s},\label{PA2}\\
[P_i,A_{j+m_1,3}]
=\frac{\hbar^2}{2}\sum\limits_{\substack{s,v\geq0}} \sum\limits_{z=m_1+1}^{m_1+m_2}  E_{i,z}t^{-v-1} E_{z,j+m_1}t^{v-s}E_{j+m_1,i}t^{s+1}\nonumber\\
\hphantom{[P_i,A_{j+m_1,3}]=}{}\hspace{0.2mm} -\frac{\hbar^2}{2}\sum\limits_{\substack{s,v\geq0}} \sum\limits_{u=1}^{m_1}E_{i,j+m_1}t^{-v-1} E_{u,i}t^{v-s}E_{j+m_1,u}t^{s+1}\nonumber\\
\hphantom{[P_i,A_{j+m_1,3}]=}{}\hspace{0.2mm} -\frac{\hbar^2}{2}\sum\limits_{\substack{s,v\geq0}} \sum\limits_{z=m_1+1}^{j+m_1-1}  E_{i,j+m_1}t^{-v-1}E_{u,i}t^{-s}E_{j+m_1,u}t^{v+s+1}\nonumber\\
\hphantom{[P_i,A_{j+m_1,3}]=}{}\hspace{0.2mm} +\frac{\hbar^2}{2}\sum\limits_{\substack{s,v\geq0}} \sum\limits_{z=m_1+1}^{j+m_1-1}  E_{z,j+m_1}t^{-s-v-1}E_{i,z}t^{s} E_{j+m_1,i}t^{v+1}\nonumber\\
\hphantom{[P_i,A_{j+m_1,3}]=}{}\hspace{0.2mm} +\frac{\hbar^2}{2}\sum\limits_{\substack{s,v\geq0}} \sum\limits_{u=1}^{m_1}E_{u,j+m_1}t^{-s-1}E_{i,u}t^{-v+s}E_{j+m_1,i}t^{v+1}\nonumber\\
\hphantom{[P_i,A_{j+m_1,3}]=}{}\hspace{0.2mm} -\frac{\hbar^2}{2}\sum\limits_{\substack{s,v\geq0}} \sum\limits_{z=m_1+1}^{m_1+m_2}  E_{i,j+m_1}t^{-s-1}E_{j+m_1,z}t^{-v+s}E_{z,i}t^{v+1},\label{PA3}\\
[P_i,A_{j+m_1,4}]
={-}\frac{\hbar^2}{2}\sum\limits_{\substack{s,v\geq0}} \sum\limits_{z=j+m_1+1}^{m_1+m_2}  E_{i,z}t^{-v-1}E_{j+m_1,i}t^{-s}E_{z,j+m_1}t^{s+v+1}\nonumber\\
\hphantom{[P_i,A_{j+m_1,4}]=}{}\hspace{0.2mm} -\frac{\hbar^2}{2}\sum\limits_{\substack{s,v\geq0}} \sum\limits_{u=-n_1-n_2}^{-n_1-1} E_{i,u}t^{-v-s-2}E_{j+m_1,i}t^{v+1}E_{u,j+m_1}t^{s+1}\nonumber\\
\hphantom{[P_i,A_{j+m_1,4}]=}{}\hspace{0.2mm} -\frac{\hbar^2}{2}\sum\limits_{\substack{s,v\geq0}} \sum\limits_{u=-n_1}^{-1} E_{i,u}t^{-v-s-2}E_{j+m_1,i}t^{v+1}E_{u,j+m_1}t^{s+1}\nonumber\\
\hphantom{[P_i,A_{j+m_1,4}]=}{}\hspace{0.2mm} +\frac{\hbar^2}{2}\sum\limits_{\substack{s,v\geq0}} \sum\limits_{u=-n_1-n_2}^{-n_1-1} E_{j+m_1,u}t^{-s-1}E_{i,j+m_1}t^{-v-1} E_{u,i}t^{v+s+2}\nonumber\\
\hphantom{[P_i,A_{j+m_1,4}]=}{}\hspace{0.2mm} +\frac{\hbar^2}{2}\sum\limits_{\substack{s,v\geq0}} \sum\limits_{u=-n_1}^{-1} E_{j+m_1,u}t^{-s-1}E_{i,j+m_1}t^{-v-1} E_{u,i}t^{v+s+2}\nonumber\\
\hphantom{[P_i,A_{j+m_1,4}]=}{}\hspace{0.2mm} +\frac{\hbar^2}{2}\sum\limits_{\substack{s,v\geq0}} \sum\limits_{z=j+m_1+1}^{m_1+m_2}  E_{j+m_1,z}t^{-s-v-1}E_{i,j+m_1}t^{s}E_{z,i}t^{v+1}.\label{PA4}
\end{gather}
By a direct computation, we have
\begin{align*}
&\eqref{PA1}_2+\eqref{PA3}_3=-\frac{\hbar^2}{2}\sum\limits_{\substack{s,v\geq0}}  \sum\limits_{\substack{u=m_1+1\\u\neq j+m_1}}^{m_1+m_2}  E_{u,i}t^{-s}E_{i,j+m_1}t^{-v-1}E_{j+m_1,u}t^{v+s+1},\nonumber\\
&\eqref{PA1}_3+\eqref{PA3}_4=\frac{\hbar^2}{2}\sum\limits_{\substack{s,v\geq0}} \sum\limits_{\substack{z=m_1+1\\z\neq j+m_1}}^{m_1+m_2}  E_{z,j+m_1}t^{-s-v-1} E_{j+m_1,i}t^{v+1}E_{i,z}t^{s},\nonumber\\
&-\eqref{PA2}_2-\eqref{PA4}_6=\frac{\hbar^2}{2}\sum\limits_{\substack{s,v\geq0}}  E_{j+m_1,j+m_1}t^{-v-1-s}E_{i,j+m_1}t^{s}E_{j+m_1,i}t^{v+1},\\
&-\eqref{PA2}_3-\eqref{PA4}_1=-\frac{\hbar^2}{2}\sum\limits_{\substack{s,v\geq0}}  E_{i,j+m_1}t^{-v-1}E_{i,i}t^{-s}E_{j+m_1,i}t^{s+v+1}.
\end{align*}
Then, we find that
\begin{gather}
\eqref{PA1}_2+\eqref{PA3}_3+\eqref{PA1}_3+\eqref{PA3}_4-\eqref{PA2}_2-\eqref{PA4}_6-\eqref{PA2}_3-\eqref{PA4}_1\nonumber\\
\qquad=- \frac{\hbar^2}{2}\sum\limits_{\substack{s,v\geq0}}  \sum\limits_{\substack{u=m_1+1}}^{m_1+m_2}  E_{u,i}t^{-s}E_{i,j+m_1}t^{-v-1}E_{j+m_1,u}t^{v+s+1}\nonumber\\
\hspace{12.05mm}+\frac{\hbar^2}{2}\sum\limits_{\substack{s,v\geq0}} \sum\limits_{\substack{z=m_1+1\\z\neq j+m_1}}^{m_1+m_2}  E_{z,j+m_1}t^{-s-v-1} E_{j+m_1,i}t^{v+1}E_{i,z}t^{s}.\label{PA}
\end{gather}
Similarly, by the definition of $A_i$, we can rewrite $[Q_i,A_{j+m_1}]$ as
\begin{align*}
[Q_i,A_{j+m_1,1}]-[Q_i,A_{j+m_1,2}]+[Q_i,A_{j+m_1,3}]-[Q_i,A_{j+m_1,4}].
\end{align*}
By \eqref{e1} and \eqref{e2}, we obtain
\begin{gather}
[Q_i,A_{j+m_1,1}]
=\frac{\hbar^2}{2}\sum\limits_{\substack{s,v\geq0}} \sum\limits_{z=-n_1-n_2}^{-n_1-1}  E_{i,j+m_1}t^{-v-1-s}E_{z,i}t^{v+1}E_{j+m_1,z}t^{s}\nonumber\\
\hphantom{[Q_i,A_{j+m_1,1}]=}{}\hspace{0.2mm}
-\frac{\hbar^2}{2}\sum\limits_{\substack{s,v\geq0}} \sum\limits_{z=-n_1-n_2}^{-n_1-1}  E_{z,j+m_1}t^{-s}E_{i,z}t^{-v-1}E_{j+m_1,i}t^{v+1+s},\label{QA1}\\
[Q_i,A_{j+m_1,2}]
=-\frac{\hbar^2}{2}\sum\limits_{\substack{s,v\geq0}} \sum\limits_{z=-n_1-n_2}^{-n_1-1}  E_{j+m_1,z}t^{-v-s-1}E_{i,j+m_1}t^{s}E_{z,i}t^{v+1}\nonumber\\
\hphantom{[Q_i,A_{j+m_1,2}]=}{}\hspace{0.2mm}
+\frac{\hbar^2}{2}\sum\limits_{\substack{s,v\geq0}} \sum\limits_{z=-n_1-n_2}^{-n_1-1}  E_{i,z}t^{-v-1}E_{j+m_1,i}t^{-s} E_{z,j+m_1}t^{v+1+s},\label{QA2}\\
[Q_i,A_{j+m_1,3}]
=\frac{\hbar^2}{2}\sum\limits_{\substack{s,v\geq0}} \sum\limits_{z=-n_1-n_2}^{-n_1-1}  E_{i,z}t^{-v-1} E_{z,j+m_1}t^{v-s}E_{j+m_1,i}t^{s+1}\nonumber\\
\hphantom{[Q_i,A_{j+m_1,3}]=}{}\hspace{0.2mm}
-\frac{\hbar^2}{2}\sum\limits_{\substack{s,v\geq0}} \sum\limits_{z=-n_1-n_2}^{-n_1-1}  E_{i,j+m_1}t^{-s-1}E_{j+m_1,z}t^{s-v}E_{z,i}t^{v+1},\label{QA3}\\
[Q_i,A_{j+m_1,4}]
=-\frac{\hbar^2}{2}\sum\limits_{\substack{s,v\geq0}} \sum\limits_{z=-n_1-n_2}^{-n_1-1}  E_{i,z}t^{-v-1}E_{j+m_1,i}t^{v-s}E_{z,j+m_1}t^{s+1}\nonumber\\
\hphantom{[Q_i,A_{j+m_1,4}]=}{}\hspace{0.2mm}
+\frac{\hbar^2}{2}\sum\limits_{\substack{s,v\geq0}} \sum\limits_{z=-n_1-n_2}^{-n_1-1}  E_{j+m_1,z}t^{-s-1}E_{i,j+m_1}t^{s-v}E_{z,i}t^{v+1}.\label{QA4}
\end{gather}
By a direct computation, we obtain
\begin{align}
&-\eqref{QA2}_1-\eqref{QA4}_2=-\frac{\hbar^2}{2}\sum\limits_{\substack{s,v\geq0}} \sum\limits_{z=-n_1-n_2}^{-n_1-1}  E_{j+m_1,z}t^{-s-1}E_{i,j+m_1}t^{-v-1}E_{z,i}t^{s+v+2},\label{QA5}\\
&-\eqref{QA2}_2-\eqref{QA4}_1=\frac{\hbar^2}{2}\sum\limits_{\substack{s,v\geq0}} \sum\limits_{z=-n_1-n_2}^{-n_1-1}  E_{i,z}t^{-s-v-2}E_{j+m_1,i}t^{v+1}E_{z,j+m_1}t^{s+1}.\label{QA6}
\end{align}
By the definition of $A_i$, we can rewrite $[A_i,R_{j+m_1}]$ as
\begin{align*}
[A_{i,1},R_{j+m_1}]-[A_{i,2},R_{j+m_1}]+[A_{i,3},R_{j+m_1}]-[A_{i,4},R_{j+m_1}].
\end{align*}
By \eqref{e1} and \eqref{e3}, we obtain
\begin{gather}
[A_{i,1},R_{j+m_1}]
=\frac{\hbar^2}{2}\sum\limits_{\substack{s,v\geq0}} \sum\limits_{u=-n_1}^{-1} E_{u,i}t^{-s}E_{i,j+m_1}t^{-v-1} E_{j+m_1,u}t^{s+v+1}\nonumber\\
\hphantom{[A_{i,1},R_{j+m_1}]=}{}\hspace{0.2mm}
-\frac{\hbar^2}{2}\sum\limits_{\substack{s,v\geq0}} \sum\limits_{u=-n_1}^{-1} E_{u,j+m_1}t^{-s-v-1}E_{j+m_1,i}t^{s+1}E_{i,u}t^{v},\label{AR1}\\
[A_{i,2},R_{j+m_1}]=0,\\
[A_{i,3},R_{j+m_1}]=0,\\
[A_{i,4},R_{j+m_1}]
=-\frac{\hbar^2}{2}\sum\limits_{\substack{s,v\geq0}} \sum\limits_{u=-n_1}^{-1} E_{i,j+m_1}t^{-s-1}E_{u,i}t^{s+1-v}E_{j+m_1,u}t^{v}\nonumber\\
\hphantom{[A_{i,4},R_{j+m_1}]=}{}\hspace{0.2mm} -\frac{\hbar^2}{2}\sum\limits_{\substack{s,v\geq0}} \sum\limits_{u=-n_1}^{-1} E_{i,j+m_1}t^{-s-v-1}E_{j+m_1,u}t^{s}E_{u,i}t^{v+1}\nonumber\\
\hphantom{[A_{i,4},R_{j+m_1}]=}{}\hspace{0.2mm} +\frac{\hbar^2}{2}\sum\limits_{\substack{s,v\geq0}} \sum\limits_{u=-n_1}^{-1}E_{i,u}t^{-v-1}E_{u,j+m_1}t^{-s}E_{j+m_1,i}t^{v+s+1}\nonumber\\
\hphantom{[A_{i,4},R_{j+m_1}]=}{}\hspace{0.2mm} +\frac{\hbar^2}{2}\sum\limits_{\substack{s,v\geq0}} \sum\limits_{u=-n_1}^{-1}E_{u,j+m_1}t^{-s}E_{i,u}t^{s-v-1}E_{j+m_1,i}t^{v+1}.\label{AR2}
\end{gather}
By the definition of $A_i$, we can rewrite $[A_i,S_{j+m_1}]$ as
\begin{align*}
[A_{i,1},S_{j+m_1}]-[A_{i,2},S_{j+m_1}]+[A_{i,3},S_{j+m_1}]-[A_{i,4},S_{j+m_1}].
\end{align*}
By a direct computation, we obtain
\begin{gather}
[A_{i,1},S_{j+m_1}]
=-\frac{\hbar^2}{2}\sum\limits_{\substack{s,v\geq0}} \sum\limits_{u=1}^{i} E_{u,i}t^{-v-s-1}E_{i,j+m_1}t^{v}E_{j+m_1,u}t^{s+1}\nonumber\\
\hphantom{[A_{i,1},S_{j+m_1}]=}{}\hspace{0.2mm}+\frac{\hbar^2}{2}\sum\limits_{\substack{s,v\geq0}} \sum\limits_{z=m_1+1}^{m_1+m_2} E_{z,j+m_1}t^{-v-s-1}E_{i,z}t^{v}E_{j+m_1,i}t^{s+1}\nonumber\\
\hphantom{[A_{i,1},S_{j+m_1}]=}{}\hspace{0.2mm}+\frac{\hbar^2}{2}\sum\limits_{\substack{s,v\geq0}} \sum\limits_{z=-n_1}^{-1} E_{z,j+m_1}t^{-v-s-1}E_{i,z}t^{v}E_{j+m_1,i}t^{s+1}\nonumber\\
\hphantom{[A_{i,1},S_{j+m_1}]=}{}\hspace{0.2mm}+\frac{\hbar^2}{2}\sum\limits_{\substack{s,v\geq0}} \sum\limits_{z=-n_1-n_2}^{-n_1-1} E_{z,j+m_1}t^{-v-s-1}E_{i,z}t^{v}E_{j+m_1,i}t^{s+1}\nonumber\\
\hphantom{[A_{i,1},S_{j+m_1}]=}{}\hspace{0.2mm}+\frac{\hbar^2}{2}\sum\limits_{\substack{s,v\geq0}} \sum\limits_{u=1}^{i} E_{u,j+m_1}t^{-s-1}E_{j+m_1,i}t^{-v} E_{i,u}t^{v+s+1}\nonumber\\
\hphantom{[A_{i,1},S_{j+m_1}]=}{}\hspace{0.2mm}-\frac{\hbar^2}{2}\sum\limits_{\substack{s,v\geq0}} \sum\limits_{z=m_1+1}^{m_1+m_2} E_{i,j+m_1}t^{-s-1}E_{z,i}t^{-v} E_{j+m_1,z}t^{v+s+1}\nonumber\\
\hphantom{[A_{i,1},S_{j+m_1}]=}{}\hspace{0.2mm}-\frac{\hbar^2}{2}\sum\limits_{\substack{s,v\geq0}} \sum\limits_{z=-n_1}^{-1} E_{i,j+m_1}t^{-s-1}E_{z,i}t^{-v} E_{j+m_1,z}t^{v+s+1}\nonumber\\
\hphantom{[A_{i,1},S_{j+m_1}]=}{}\hspace{0.2mm}-\frac{\hbar^2}{2}\sum\limits_{\substack{s,v\geq0}} \sum\limits_{z=-n_1-n_2}^{-n_1-1} E_{i,j+m_1}t^{-s-1}E_{z,i}t^{-v} E_{j+m_1,z}t^{v+s+1},\label{AS1}\\
[A_{i,2},S_{j+m_1}]
=\frac{\hbar^2}{2}\sum\limits_{\substack{s,v\geq0\\i>z}}  E_{i,z}t^{-s-v-1} E_{j+m_1,i}t^{s+1}E_{z,j+m_1}t^{v}\nonumber\\
\hphantom{[A_{i,2},S_{j+m_1}]=}{}\hspace{0.2mm}-\frac{\hbar^2}{2}\sum\limits_{\substack{s,v\geq0\\i>z}}  E_{j+m_1,z}t^{-v}E_{i,j+m_1}t^{-s-1}E_{z,i}t^{s+v+1},\label{AS2}\\
[A_{i,3},S_{j+m_1}]
=\frac{\hbar^2}{2}\sum\limits_{\substack{s,v\geq0}} \sum\limits_{u=1}^{i-1}E_{z,i}t^{-s-v-1} E_{i,j+m_1}t^{v}E_{j+m_1,u}t^{s+1}\nonumber\\
\hphantom{[A_{i,3},S_{j+m_1}]=}{}\hspace{0.2mm}-\frac{\hbar^2}{2}\sum\limits_{\substack{s,v\geq0}} \sum\limits_{u=1}^{i-1} E_{u,j+m_1}t^{-s-1}E_{j+m_1,i}t^{-v}E_{i,u}t^{v+s+1},\label{AS3}\\
[A_{i,4},S_{j+m_1}]
=\frac{\hbar^2}{2}\sum\limits_{\substack{s,v\geq0\\i<z}} (-1)^{p(z)} E_{i,z}t^{-v-s-1} E_{z,j+m_1}t^{v}E_{j+m_1,i}t^{s+1}\nonumber\\
\hphantom{[A_{i,4},S_{j+m_1}]=}{}\hspace{0.2mm}+\frac{\hbar^2}{2}\sum\limits_{\substack{s,v\geq0}} \sum\limits_{z=m_1+1}^{m_1+m_2}(-1)^{p(z)} E_{i,z}t^{-v-1} E_{z,j+m_1}t^{-s-1}E_{j+m_1,i}t^{s+v+2}\nonumber\\
\hphantom{[A_{i,4},S_{j+m_1}]=}{}\hspace{0.2mm}+\frac{\hbar^2}{2}\sum\limits_{\substack{s,v\geq0}} \sum\limits_{z=-n_1}^{-1}(-1)^{p(z)} E_{i,z}t^{-v-1} E_{z,j+m_1}t^{-s-1}E_{j+m_1,i}t^{s+v+2}\nonumber\\
\hphantom{[A_{i,4},S_{j+m_1}]=}{}\hspace{0.2mm}+\frac{\hbar^2}{2}\sum\limits_{\substack{s,v\geq0}} \sum\limits_{z=1}^{m_1}(-1)^{p(z)} E_{i,z}t^{-v-1} E_{z,j+m_1}t^{-s-1}E_{j+m_1,i}t^{s+v+2}\nonumber\\
\hphantom{[A_{i,4},S_{j+m_1}]=}{}\hspace{0.2mm}+\frac{\hbar^2}{2}\sum\limits_{\substack{s,v\geq0}} \sum\limits_{z=-n_1-n_2}^{-n_1-1}(-1)^{p(z)} E_{i,z}t^{-v-1} E_{z,j+m_1}t^{-s-1}E_{j+m_1,i}t^{s+v+2}\nonumber\\
\hphantom{[A_{i,4},S_{j+m_1}]=}{}\hspace{0.2mm}-\frac{\hbar^2}{2}\sum\limits_{\substack{s,v\geq0}} \sum\limits_{u=1}^{m_1}E_{i,j+m_1}t^{-v-1} E_{u,i}t^{-s}E_{j+m_1,u}t^{s+v+1}\nonumber\\
\hphantom{[A_{i,4},S_{j+m_1}]=}{}\hspace{0.2mm}+\frac{\hbar^2}{2}\sum\limits_{\substack{s,v\geq0}} \sum\limits_{u=1}^{m_1}E_{u,j+m_1}t^{-s-v-1}E_{i,u}t^{s}E_{j+m_1,i}t^{v+1}\nonumber\\
\hphantom{[A_{i,4},S_{j+m_1}]=}{}\hspace{0.2mm}-\frac{\hbar^2}{2}\sum\limits_{\substack{s,v\geq0}} \sum\limits_{z=-n_1}^{-1}(-1)^{p(z)} E_{i,j+m_1}t^{-s-v-2}E_{j+m_1,z}t^{s+1}E_{z,i}t^{v+1}\nonumber\\
\hphantom{[A_{i,4},S_{j+m_1}]=}{}\hspace{0.2mm}-\frac{\hbar^2}{2}\sum\limits_{\substack{s,v\geq0}} \sum\limits_{z=m_1+1}^{m_1+m_2}(-1)^{p(z)} E_{i,j+m_1}t^{-s-v-2}E_{j+m_1,z}t^{s+1}E_{z,i}t^{v+1}\nonumber\\
\hphantom{[A_{i,4},S_{j+m_1}]=}{}\hspace{0.2mm}-\frac{\hbar^2}{2}\sum\limits_{\substack{s,v\geq0}} \sum\limits_{z=1}^{m_1}(-1)^{p(z)} E_{i,j+m_1}t^{-s-v-2}E_{j+m_1,z}t^{s+1}E_{z,i}t^{v+1}\nonumber\\
\hphantom{[A_{i,4},S_{j+m_1}]=}{}\hspace{0.2mm}-\frac{\hbar^2}{2}\sum\limits_{\substack{s,v\geq0}} \sum\limits_{z=-n_1-n_2}^{-n_1-1}(-1)^{p(z)} E_{i,j+m_1}t^{-s-v-2}E_{j+m_1,z}t^{s+1}E_{z,i}t^{v+1}\nonumber\\
\hphantom{[A_{i,4},S_{j+m_1}]=}{}\hspace{0.2mm}-\frac{\hbar^2}{2}\sum\limits_{\substack{s,v\geq0\\i<z}} (-1)^{p(z)} E_{i,j+m_1}t^{-s-1}E_{j+m_1,z}t^{-v}E_{z,i}t^{s+v+1}.\label{AS4}
\end{gather}
Since
\begin{align*}
&\eqref{AS1}_1+\eqref{AS3}_1=-\frac{\hbar^2}{2}\sum\limits_{\substack{s,v\geq0}}  E_{i,i}t^{-s-v-1} E_{i,j+m_1}t^{v}E_{j+m_1,i}t^{s+1},\nonumber\\
&\eqref{AS1}_4+\eqref{AS3}_2=\frac{\hbar^2}{2}\sum\limits_{\substack{s,v\geq0}}  E_{i,j+m_1}t^{-s-1}E_{j+m_1,i}t^{-v}E_{i,i}t^{s+v+1},\nonumber\\
&-\eqref{AS2}_1-\eqref{AS4}_1=-\frac{\hbar^2}{2}\sum\limits_{\substack{s,v\geq0\\z\neq i}} (-1)^{p(z)} E_{i,z}t^{-v-s-1} E_{z,j+m_1}t^{v}E_{j+m_1,i}t^{s+1},\\
&-\eqref{AS2}_2-\eqref{AS4}_{12}=\frac{\hbar^2}{2}\sum\limits_{\substack{s,v\geq0\\z\neq i}} (-1)^{p(z)} E_{i,j+m_1}t^{-s-1}E_{j+m_1,z}t^{-v}E_{z,i}t^{s+v+1},
\end{align*}
we obtain
\begin{gather}
\eqref{AS1}_1+\eqref{AS3}_1+\eqref{AS1}_4+\eqref{AS3}_2-\eqref{AS2}_1-\eqref{AS4}_1-\eqref{AS2}_2-\eqref{AS4}_{12}\nonumber\\
\qquad=- \frac{\hbar^2}{2}\sum\limits_{\substack{s,v\geq0}} \sum\limits_{z=m_1+1}^{m_1+m_2}(-1)^{p(z)} E_{i,z}t^{-v-s-1} E_{z,j+m_1}t^{v}E_{j+m_1,i}t^{s+1}\nonumber\\
\hspace{11.75mm}-\frac{\hbar^2}{2}\sum\limits_{\substack{s,v\geq0}} \sum\limits_{z=-n_1-n_2}^{-n_1-1}(-1)^{p(z)} E_{i,z}t^{-v-s-1} E_{z,j+m_1}t^{v}E_{j+m_1,i}t^{s+1}\nonumber\\
\hspace{11.75mm}-\frac{\hbar^2}{2}\sum\limits_{\substack{s,v\geq0}} \sum\limits_{z=-n_1}^{-1}(-1)^{p(z)} E_{i,z}t^{-v-s-1} E_{z,j+m_1}t^{v}E_{j+m_1,i}t^{s+1}\nonumber\\
\hspace{11.75mm}-\frac{\hbar^2}{2}\sum\limits_{\substack{s,v\geq0}} \sum\limits_{z=1}^{m_1}(-1)^{p(z)} E_{i,z}t^{-v-s-1} E_{z,j+m_1}t^{v}E_{j+m_1,i}t^{s+1}\nonumber\\
\hspace{11.75mm}+\frac{\hbar^2}{2}\sum\limits_{\substack{s,v\geq0}} \sum\limits_{z=m_1+1}^{m_1+m_2}(-1)^{p(z)} E_{i,j+m_1}t^{-s-1}E_{j+m_1,z}t^{-v}E_{z,i}t^{s+v+1}\nonumber\\
\hspace{11.75mm}+\frac{\hbar^2}{2}\sum\limits_{\substack{s,v\geq0}} \sum\limits_{z=-n_1-n_2}^{-n_1-1}(-1)^{p(z)} E_{i,j+m_1}t^{-s-1}E_{j+m_1,z}t^{-v}E_{z,i}t^{s+v+1}\nonumber\\
\hspace{11.75mm}+\frac{\hbar^2}{2}\sum\limits_{\substack{s,v\geq0}} \sum\limits_{z=-n_1}^{-1}(-1)^{p(z)} E_{i,j+m_1}t^{-s-1}E_{j+m_1,z}t^{-v}E_{z,i}t^{s+v+1}\nonumber\\
\hspace{11.75mm}+\frac{\hbar^2}{2}\sum\limits_{\substack{s,v\geq0}} \sum\limits_{z=1}^{m_1}(-1)^{p(z)} E_{i,j+m_1}t^{-s-1}E_{j+m_1,z}t^{-v}E_{z,i}t^{s+v+1}.\label{AS}
\end{gather}
By a direct computation, we obtain
\begin{gather*}
\eqref{PR}_1-\eqref{PA1}_3+\eqref{AR1}_1-\eqref{AR2}_1-\eqref{AR2}_2+\eqref{AS1}_7-\eqref{AS4}_8\nonumber\\
\qquad=\frac{\hbar^2}{2}\sum\limits_{\substack{v\geq0}} \sum\limits_{u=-n_1}^{-1} E_{i,j+m_1}t^{-v-1}E_{j+m_1,u}E_{u,i}t^{v+1},\label{waa1}\\
\eqref{PR}_2-\eqref{PA1}_8+\eqref{AR1}_2-\eqref{AR2}_3-\eqref{AR2}_4+\eqref{AS1}_3-\eqref{AS4}_3\nonumber\\
\qquad=-\frac{\hbar^2}{2}\sum\limits_{\substack{v\geq0}} \sum\limits_{u=-n_1}^{-1}E_{i,u}t^{-v-1}E_{u,j+m_1}E_{j+m_1,i}t^{v+1},\\
\eqref{PS}_2-\eqref{PA3}_2-\eqref{AS4}_6-\eqref{AS4}_9\nonumber\\
\qquad=m_1\frac{\hbar^2}{2}\sum\limits_{\substack{s,v\geq0}} \sum\limits_{z=1}^{m_1}E_{i,j+m_1}t^{-s-v-2}E_{j+m_1,i}t^{s+v+2},\\
\eqref{PS}_3-\eqref{PA3}_5-\eqref{AS4}_4-\eqref{AS4}_7\nonumber\\
\qquad=-m_1\frac{\hbar^2}{2}\sum\limits_{\substack{s,v\geq0}} \sum\limits_{z=1}^{m_1}E_{i,j+m_1}t^{-s-v-2}E_{j+m_1,i}t^{s+v+2},
\\
\eqref{PS}_1-\eqref{PA3}_1-\eqref{AS4}_2+\eqref{AS}_1=0,\\
\eqref{PS}_1-\eqref{PA3}_6-\eqref{AS4}_{10}+\eqref{AS}_5=0,\\
-\eqref{QS}_1-\eqref{PA1}_7+\eqref{PA4}_2+\eqref{QA1}_2+\eqref{QA4}_1+\eqref{QA6}--\eqref{AS4}_5+\eqref{AS}_2=0,\\
-\eqref{QS}_2-\eqref{PA1}_1+\eqref{PA4}_4+\eqref{QA1}_1+\eqref{QA4}_2+\eqref{QA5}--\eqref{AS4}_{11}+\eqref{AS}_6=0,\\
\eqref{PA2}_1+\eqref{AS}_4=0,\qquad
\eqref{PA2}_2+\eqref{AS}_8=0,\\
-\eqref{PA1}_2+\eqref{AS1}_8=0,\qquad
-\eqref{PA1}_6+\eqref{AS1}_4=0,
\\
-\eqref{PA}_1+\eqref{AS1}_{6}
=\frac{\hbar^2}{2}\sum\limits_{\substack{s,v\geq0}}  \sum\limits_{\substack{u=m_1+1}}^{m_1+m_2}  E_{u,j+m_1}t^{-s-v-1}E_{j+m_1,u}t^{v+s+1},\\
-\eqref{PA}_2+\eqref{AS1}_{2}
=-\frac{\hbar^2}{2}\sum\limits_{\substack{s,v\geq0}} \sum\limits_{\substack{z=m_1+1}}^{m_1+m_2}  E_{z,j+m_1}t^{-s-v-1} E_{j+m_1,z}t^{s+v+1},\\
\eqref{PA4}_3+\eqref{AS}_3
=-\frac{\hbar^2}{2}\sum\limits_{\substack{s\geq0}} \sum\limits_{z=-n_1}^{-1}(s+1)(-1)^{p(z)} E_{i,z}t^{-s-1} E_{z,i}t^{s+1}\nonumber\\
\hphantom{\eqref{PA4}_3+\eqref{AS}_3=}{}\hspace{0.2mm}
+\frac{\hbar^2}{2}\sum\limits_{\substack{v\geq0}} \sum\limits_{u=-n_1}^{-1} E_{i,u}t^{-v-1}E_{u,j+m_1}E_{j+m_1,i}t^{v+1},\\
-\eqref{PA4}_5+\eqref{AS}_7
=\frac{\hbar^2}{2}\sum\limits_{\substack{s\geq0}} \sum\limits_{z=-n_1}^{-1}(s+1)(-1)^{p(z)} E_{i,z}t^{-s-1} E_{z,i}t^{s+1}\nonumber\\
\hphantom{-\eqref{PA4}_5+\eqref{AS}_7=}{}
-\frac{\hbar^2}{2}\sum\limits_{\substack{v\geq0}} \sum\limits_{u=-n_1}^{-1} E_{i,j+m_1}t^{-v-1} E_{j+m_1,u}E_{u,i}t^{v+1}.\label{waa2}
\end{gather*}
By adding \eqref{waa1}--\eqref{waa2}, we obtain \eqref{conclusion}. This completes the proof of Theorem~\ref{Commutativity}.

\section{Proof of Theorem~\ref{Tinf}}
This aapendix is devoted to the proof of Theorem~\ref{Tinf}.
We define a grading on $\mathfrak{b}$ by setting ${\rm deg}(e_{i,j})=\col(j)-\col(i)$. Let us set
\begin{align*}
f^{r}_{a,b}&=\sum\limits_{\substack{\col(i)=\col(j)+r\\\row(i)=a,\, \row(j)=b}}  e_{i,j}.
\end{align*}
For $a\in I_{u_1|q_1}$, we set $1\leq s_a\leq l$ as \smash{$a\in I_{u_1-u_{s_a}|q_1-q_{s_a}}\setminus I_{u_1-u_{s_a-1}|q_1-q_{s_a-1}}$}.
Since
\begin{align*}
&A=\bigl\{f^r_{a,b}\mid 0\leq r\leq l-1,s_a\leq s_b\bigr\}\cup\bigl\{f^r_{a,b}\mid s_a-s_b\leq r\leq l-1,s_a>s_b\bigr\}
\end{align*}
forms a basis of $\mathfrak{gl}(M|N)^f=\{g\in\mathfrak{gl}(M|N)|[f,g]=0\}$, it is enough to show that \smash{$W^{(1)}_{a,b}$} and~\smash{$W^{(2)}_{a,b}$} generate all terms of $A$ by \cite[Theorem~4.1]{KW1}. Here after, we set \smash{$f^r_{a,b}=0$} if $r\geq l$ or~${s_a>s_b}$, ${r<s_a-s_b}$

We show that \smash{$W^{(1)}_{a,b}$} and \smash{$W^{(2)}_{a,b}$} generate these terms by two claims, that is, Claims~\ref{T1} and~\ref{T3}. In Claim~\ref{T1} below, we show that \smash{$W^{(1)}_{a,b}$} and \smash{$W^{(2)}_{a,b}$} generate the term whose form is
\begin{equation*}
f^r_{i,j}[-1]+\text{higher terms}\qquad \text{for }i\neq j.
\end{equation*}
In Claim~\ref{T3} below, we prove that \smash{$W^{(1)}_{a,b}$} and \smash{$W^{(2)}_{a,b}$} generate the term whose form is
\begin{equation*}
f^r_{i,i}[-1]+\text{higher terms}.
\end{equation*}
\begin{Claim}\label{T1}\qquad
\begin{enumerate}
\item[$(1)$] The elements \smash{$W^{(1)}_{a,b}$} and \smash{$W^{(2)}_{a,b}$} generate the term whose form is
\begin{equation*}
f^r_{i,j}[-1]+\text{higher terms}\qquad \text{if }i\neq j.
\end{equation*}
\item[$(2)$] The elements \smash{$W^{(1)}_{a,b}$} and \smash{$W^{(2)}_{a,b}$} generate the term whose form is
\begin{align*}
&{(-1)}^{p(i)}f^r_{i,i}[-1]-{(-1)}^{p(i+1)}f^r_{i+1,i+1}[-1]+\text{higher terms}.
\end{align*}
\end{enumerate}
\end{Claim}
\begin{proof}
By a direct computation, the following equation holds:
\begin{align}
\bigl(f^x_{j,i}[-1]\bigr)_{(0)}f^w_{u,v}[-1]
=\delta_{i,u}f^{w+x}_{j,v}[-1]-\delta_{j,v}{(-1)}^{p(e_{i,j})p(e_{u,v})}f^{w+x}_{u,i}[-1]\label{nom}
\end{align}
if \smash{$f^x_{j,i}\neq0$} and \smash{$f^w_{u,v}\neq0$}.
Items (1) and (2) follow from \eqref{nom}.
We only show item~(1). Item~(2) can be proven by the same way as~\cite[Claim A.1.4]{U4}. The element \smash{$W^{(2)}_{a,b}$} has the form such that $f^1_{a,b}[-1]+\text{degree $0$ terms}$.
In the case that $s_a\leq s_b$, by \eqref{nom},
we obtain
\begin{align*}
\big(\big(W^{(2)}_{a,a}\big)_{(0)}\big)^rW^{(1)}_{a,b}=\big(f^1_{a,a}[-1])_{(0)}\big)^rW^{(1)}_{a,b}+\text{higher terms}=f^r_{a,b}+\text{higher terms}.
\end{align*}
In the case that $s_a=s_b+1$, by \eqref{nom}, we obtain
\begin{align*}
\big(\big(W^{(2)}_{a,a}\big)_{(0)}\big)^rW^{(2)}_{a,b}=\big(f^1_{a,a}[-1])_{(0)}\big)^rf^2_{a,b}+\text{higher terms}=f^{r+1}_{a,b}+\text{higher terms}.
\end{align*}
Thus, it is enough to show the case that $s_a>s_b+1$.

In the case $s_a>s_b+1$, we set $w_0$ as $a$ and take $w_u$ satisfying that $s_{w_u}=s_a-u$ for $1\leq u\leq s_a-s_b-1$. Then, by \eqref{nom}, we have
\begin{align*}
\prod_{u=1}^{s_a-s_b-1}\big(\big(f^1_{w_{u-1},w_u}[-1]\big)_{(0)}\big)f^{r-s_a+s_b+1}_{w_{s_a-s_b-1},b}=f^r_{a,b}.
\end{align*}
Thus, we have proved (1).
\end{proof}

\begin{Claim}\label{T3}
The elements \smash{$W^{(1)}_{i,j}$} and \smash{$W^{(2)}_{i,j}$} generate the term whose form is $f^r_{i,i}[-1]+\text{higher terms}$ for all $1\leq r\leq l-1$.
\end{Claim}
\begin{proof}
Suppose that $f^r_{j,j}[-1]+\text{higher terms}$ has been generated if $a_j\leq x-1$. If $a_i=x$, let us take $y$ satisfying $a_y=x-1$. Then, by \eqref{nom}, we have
\begin{align*}
\big(f_{i,k}^r[-1]\big)_{(0)}f_{k,i}^0[-1]=f_{i,i}^r[-1]-(-1)^{p(e_{i,k})}f^r_{k,k}[-1]
\end{align*}
for $1\leq r\leq x-1$. By the induction hypothesis, \smash{$(-1)^{p(e_{i,k})}f^r_{k,k}[-1]+\text{higher terms}$} is generated.~Thus, \smash{$W^{(1)}_{a,b}$} and \smash{$W^{(2)}_{a,b}$} generate the term whose form is $f^r_{i,i}[-1]+\text{higher terms}$.
\end{proof}

Since we complete the proof of Claims~\ref{T1} and \ref{T3}, we have proved Theorem~\ref{Tinf}.

\subsection*{Acknowledgements}
The author is grateful to the referees for careful reading of the manuscript and for many helpful comments that improved the paper. This work was supported by Japan Society for the Promotion of Science Overseas Research Fellowships, Grant Number JP2360303.


\begin{thebibliography}{99}
\footnotesize\itemsep=0pt

\bibitem{AS}
Arbesfeld N., Schiffmann O., A~presentation of the deformed {$W_{1+\infty}$}
 algebra, in Symmetries, Integrable Systems and Representations,
 \textit{Springer Proc. Math. Stat.}, Vol.~40,
 \href{https://doi.org/10.1007/978-1-4471-4863-0_1}{Springer}, Heidelberg,
 2013, 1--13, \href{http://arxiv.org/abs/1209.0429}{arXiv:1209.0429}.

\bibitem{BM}
Bezerra L., Mukhin E., Quantum toroidal algebra associated
 with~{$\mathfrak{gl}_{m|n}$},
 \href{https://doi.org/10.1007/s10468-020-09959-9}{\textit{Algebr. Represent.
 Theory}} \textbf{24} (2021), 541--564,
 \href{http://arxiv.org/abs/1904.07297}{arXiv:1904.07297}.

\bibitem{BK2}
Brundan J., Kleshchev A., Parabolic presentations of the
 {Y}angian~{$Y({\mathfrak{gl}}_n)$},
 \href{https://doi.org/10.1007/s00220-004-1249-6}{\textit{Comm. Math. Phys.}}
 \textbf{254} (2005), 191--220,
 \href{http://arxiv.org/abs/math.QA/0407011}{arXiv:math.QA/0407011}.

\bibitem{CR}
Creutzig T., Linshaw A.R., Trialities of {$\mathcal{W}$}-algebras,
 \href{https://doi.org/10.4310/cjm.2022.v10.n1.a2}{\textit{Camb.~J.~Math.}}
 \textbf{10} (2022), 69--194.

\bibitem{D1}
Drinfeld V.G., Hopf algebras and the quantum {Y}ang--{B}axter equation,
 \textit{Dokl. Akad. Nauk SSSR} \textbf{283} (1985), 1060--1064.

\bibitem{D2}
Drinfeld V.G., A~new realization of {Y}angians and of quantum affine algebras,
 \textit{Dokl. Akad. Nauk SSSR} \textbf{296} (1988), 13--17.

\bibitem{FF}
Feigin B., Frenkel E., Duality in {$W$}-algebras,
 \href{https://doi.org/10.1155/S1073792891000119}{\textit{Int. Math. Res.
 Not.}} \textbf{1991} (1991), no.~6, 75--82.

\bibitem{FJMM}
Feigin B., Jimbo M., Miwa T., Mukhin E., Branching rules for quantum
 toroidal~{$\mathfrak{gl}_n$},
 \href{https://doi.org/10.1016/j.aim.2016.03.019}{\textit{Adv. Math.}}
 \textbf{300} (2016), 229--274,
 \href{http://arxiv.org/abs/1309.2147}{arXiv:1309.2147}.

\bibitem{FZ}
Frenkel I.B., Zhu Y., Vertex operator algebras associated to representations of
 affine and {V}irasoro algebras,
 \href{https://doi.org/10.1215/S0012-7094-92-06604-X}{\textit{Duke Math.~J.}}
 \textbf{66} (1992), 123--168.

\bibitem{GG}
Gaberdiel M.R., Gopakumar R., Triality in minimal model holography,
 \href{https://doi.org/10.1007/JHEP07(2012)127}{\textit{J.~High Energy Phys.}}
 \textbf{2012} (2012), no.~7, 127, 27~pages,
 \href{http://arxiv.org/abs/1205.2472}{arXiv:1205.2472}.

\bibitem{GR}
Gaiotto D., Rap\v{c}\'ak M., Vertex algebras at the corner,
 \href{https://doi.org/10.1007/jhep01(2019)160}{\textit{J.~High Energy Phys.}}
 \textbf{2019} (2019), no.~1, 160, 86~pages,
 \href{http://arxiv.org/abs/1703.00982}{arXiv:1703.00982}.

\bibitem{Genra}
Genra N., Screening operators for {$\mathcal{W}$}-algebras,
 \href{https://doi.org/10.1007/s00029-017-0315-9}{\textit{Selecta Math.
 (N.S.)}} \textbf{23} (2017), 2157--2202,
 \href{http://arxiv.org/abs/1606.00966}{arXiv:1606.00966}.

\bibitem{GNW}
Guay N., Nakajima H., Wendlandt C., Coproduct for {Y}angians of affine
 {K}ac--{M}oody algebras,
 \href{https://doi.org/10.1016/j.aim.2018.09.013}{\textit{Adv. Math.}}
 \textbf{338} (2018), 865--911,
 \href{http://arxiv.org/abs/1701.05288}{arXiv:1701.05288}.

\bibitem{Ho}
Hoyt C., Good gradings of basic {L}ie superalgebras,
 \href{https://doi.org/10.1007/s11856-012-0023-2}{\textit{Israel~J.~Math.}}
 \textbf{192} (2012), 251--280,
 \href{http://arxiv.org/abs/1010.3412}{arXiv:1010.3412}.

\bibitem{KRW}
Kac V.G., Roan S.-S., Wakimoto M., Quantum reduction for affine superalgebras,
 \href{https://doi.org/10.1007/s00220-003-0926-1}{\textit{Comm. Math. Phys.}}
 \textbf{241} (2003), 307--342,
 \href{http://arxiv.org/abs/math-ph/0302015}{arXiv:math-ph/0302015}.

\bibitem{KW1}
Kac V.G., Wakimoto M., Quantum reduction and representation theory of
 superconformal algebras,
 \href{https://doi.org/10.1016/j.aim.2003.12.005}{\textit{Adv. Math.}}
 \textbf{185} (2004), 400--458,
 \href{http://arxiv.org/abs/math-ph/0304011}{arXiv:math-ph/0304011}.

\bibitem{MNT}
Matsuo A., Nagatomo K., Tsuchiya A., Quasi-finite algebras graded by
 {H}amiltonian and vertex operator algebras, in Moonshine: The First Quarter
 Century and Beyond, \textit{London Math. Soc. Lecture Note Ser.}, Vol.~372,
 \href{https://doi.org/10.1017/CBO9780511730054.015}{Cambridge University
 Press}, Cambridge, 2010, 282--329,
 \href{http://arxiv.org/abs/math.QA/0505071}{arXiv:math.QA/0505071}.

\bibitem{MO}
Maulik D., Okounkov A., Quantum groups and quantum cohomology,
 \href{https://doi.org/10.24033/ast}{\textit{Ast\'erisque}} \textbf{408}
 (2019), ix+209~pages, \href{http://arxiv.org/abs/1211.1287}{arXiv:1211.1287}.

\bibitem{M1}
Miki K., Toroidal and level {$0\ U'_q(\widehat{{\rm sl}_{n+1}})$} actions on
 {$U_q(\widehat{{\rm gl}_{n+1}})$} modules,
 \href{https://doi.org/10.1063/1.533078}{\textit{J.~Math. Phys.}} \textbf{40}
 (1999), 3191--3210,
 \href{http://arxiv.org/abs/math.QA/9808023}{arXiv:math.QA/9808023}.

\bibitem{Nak}
Nakatsuka S., On {M}iura maps for {$\mathcal{W}$}-superalgebras,
 \href{https://doi.org/10.1007/s00031-021-09679-4}{\textit{Transform. Groups}}
 \textbf{28} (2023), 375--399,
 \href{http://arxiv.org/abs/2005.10472}{arXiv:2005.10472}.

\bibitem{Na}
Nazarov M.L., Quantum {B}erezinian and the classical {C}apelli identity,
 \href{https://doi.org/10.1007/BF00401646}{\textit{Lett. Math. Phys.}}
 \textbf{21} (1991), 123--131.

\bibitem{Ol}
Ol'shanskii G.I., Representations of infinite-dimensional classical groups,
 limits of enveloping algebras, and {Y}angians, in Topics in Representation
 Theory, \textit{Adv. Soviet Math.}, Vol.~2,
 \href{https://doi.org/10.1007/978-94-011-3618-1_1}{American Mathematical
 Society}, Providence, RI, 1991, 1--66.

\bibitem{Pe}
Peng Y.-N., Finite {$W$}-superalgebras via super {Y}angians,
 \href{https://doi.org/10.1016/j.aim.2020.107459}{\textit{Adv. Math.}}
 \textbf{377} (2021), 107459, 60~pages,
 \href{http://arxiv.org/abs/2001.08718}{arXiv:2001.08718}.

\bibitem{S}
Stukopin V.A., Yangians of {L}ie superalgebras of type {$A(m,n)$},
 \href{https://doi.org/10.1007/BF01078460}{\textit{Funct. Anal. Appl.}}
 \textbf{28} (1994), 217--219.

\bibitem{T}
Tsymbaliuk A., The affine {Y}angian of {$\mathfrak{gl}_1$} revisited,
 \href{https://doi.org/10.1016/j.aim.2016.08.041}{\textit{Adv. Math.}}
 \textbf{304} (2017), 583--645,
 \href{http://arxiv.org/abs/1404.5240}{arXiv:1404.5240}.


\bibitem{U4}
Ueda M., Affine super {Y}angians and rectangular {$W$}-superalgebras,
 \href{https://doi.org/10.1063/5.0076638}{\textit{J.~Math. Phys.}} \textbf{63}
 (2022), 051701, 34~pages,
 \href{http://arxiv.org/abs/2002.03479}{arXiv:2002.03479}.

\bibitem{U3}
Ueda M., The surjectivity of the evaluation map of the affine super {Y}angian,
 \href{https://doi.org/10.18910/88482}{\textit{Osaka~J.~Math.}} \textbf{59}
 (2022), 481--493, \href{http://arxiv.org/abs/2001.06398}{arXiv:2001.06398}.

\bibitem{U2}
Ueda M., Construction of the affine super {Y}angian,
 \href{https://doi.org/10.4171/prims/59-3-1}{\textit{Publ. Res. Inst. Math.
 Sci.}} \textbf{59} (2023), 423--486,
 \href{http://arxiv.org/abs/1911.06666}{arXiv:1911.06666}.

\bibitem{U10}
Ueda M., Two homomorphisms from the affine {Y}angian associated with
 {$\widehat{\mathfrak{sl}}(n)$} to the affine {Y}angian associated with
 {$\widehat{\mathfrak{sl}}(n+1)$},
 \href{https://doi.org/10.1007/s11005-024-01879-9}{\textit{Lett. Math. Phys.}}
 \textbf{114} (2024), 133, 31~pages,
 \href{http://arxiv.org/abs/2404.10923}{arXiv:2404.10923}.

\bibitem{U11}
Ueda M., The coproduct for the affine {Y}angian and the parabolic induction for
 non-rectangular {$W$}-algebras,
 \href{http://arxiv.org/abs/2404.14096}{arXiv:2404.14096}.

\bibitem{U13}
Ueda M., Affine super {Y}angians and non-rectangular {$W$}-superalgebras,
 \href{https://doi.org/10.1007/s11005-025-01987-0}{\textit{Lett. Math. Phys.}}
 \textbf{115} (2025), 95, 40~pages, \href{http://arxiv.org/abs/2406.04953}{arXiv:2406.04953}.

\bibitem{U12}
Ueda M., Affine {Y}angians and some cosets of non-rectangular {$W$}-algebras,
 \href{https://doi.org/10.1063/5.0267283}{\textit{J.~Math. Phys.}} \textbf{66}
 (2025), 121702, 17~pages,
 \href{http://arxiv.org/abs/2404.18064}{arXiv:2404.18064}.

\end{thebibliography}

\pdfbookmark[1]{References}{ref}
\LastPageEnding

\end{document}